\documentclass[a4paper,11pt]{amsart}

\usepackage{amsmath,amssymb,amsthm}


\theoremstyle{definition}

\newtheorem{thm}{Theorem}
\newtheorem{conj}{Conjecture}
\newtheorem{prop}{Proposition}
\newtheorem{lem}{Lemma}

\newtheorem{de}{Definition}
\newtheorem{ex}{Example}
\newtheorem{rem}{Remark}
\newtheorem{obs}{Observation}
\newtheorem{ques}{Question}


\begin{document}
\title[The number of linear factors of supersingular polynomials]{The number of linear factors of supersingular polynomials \\ and sporadic simple groups}
\author{Tomoaki Nakaya } 
\address{Graduate School of Mathematics, Kyushu University, 744, Motooka, Nishi-ku, Fukuoka, 819-0395, Japan}
\email{t-nakaya@math.kyushu-u.ac.jp}
\subjclass[2000]{11F11, 11G20, 20D08}
\keywords{Supersingular elliptic curve, Sporadic group, Hypergeometric series, Legendre polynomial, Class number, Modular form}
\date{}
\maketitle

\begin{abstract}
The set of prime numbers $p$ such that the supersingular $j$-invariants in characteristic $p$ are all contained in the prime field is finite.
And it is well known that this set of primes coincides with the set of prime divisors of the order of the Monster simple group.
In this paper, we will present analogous coincidences of supersingular invariants in level 2 and 3 and the orders of the Baby monster group and the Fischer's group. 
The proof uses a connection between the number of supersingular invariants and class numbers of imaginary quadratic fields.
\end{abstract}

\section{Introduction}
An elliptic curve $E$ over a field $K$ of characteristic $p> 0$ is called {\it supersingular} if it has no $p$-torsion over $\overline{K}$.
This condition depends only on the $j$-invariant of $E$, and it is known that there are only finitely many supersingular $j$-invariants, all being contained in $\mathbb{F}_{p^2}$.
We define the supersingular polynomial $ss_{p}(X)$ as the monic polynomial whose roots are exactly all the supersingular $j$-invariants:
\begin{align}
ss_{p}(X) = \prod_{\substack{E/\overline{\mathbb{F}}_{p} \\[1pt] E\,:\, \text{supersingular}}}  (X - j(E)). 
\end{align}
Because the set of supersingular $j$-invariants  in characteristic $p$ is stable under the conjugation over $\mathbb{F}_{p}$, we have $ss_{p}(X) \in \mathbb{F}_{p}[X]$.
For $p=2$ and $3$, we have $ss_{2}(X) = X \pmod{2}$ and $ss_{3}(X)=X \pmod{3}$ (see \cite[p.201]{deuring1941die}). 

For any prime $p \geq 5$, we define the numbers $\nu, \delta, \varepsilon \in \{0,1 \}$,
which will be used throughout this paper, by
\begin{equation}
 \nu = \frac{1}{2}\left( 1- \left( \frac{-2}{p}\right)  \right) ,\;   \delta = \frac{1}{2}\left( 1- \left( \frac{-3}{p}\right)  \right) ,\; \varepsilon = \frac{1}{2}\left( 1- \left( \frac{-1}{p}\right)  \right) ,  \label{eq:nudeleps}
\end{equation}
where $(\tfrac{\cdot }{p})$ is the Legendre symbol.

The following theorem is known (essentially due to \cite{deuring1941die}, see \cite[Proposition 5]{kaneko1998supersingular}).
\begin{thm}\label{thm:DEU}
Let $p \geq 5$ be a prime and $m = [p/12]$. Then
\begin{equation}
ss_p (X) = X^{m+\delta}(X-1728)^{\varepsilon} {}_{2}F_{1}\left( -m ,\frac{5}{12}-\frac{2\delta-3\varepsilon}{6};1;\frac{1728}{X} \right) \pmod{p} , \label{eq:hrepofssp}
\end{equation}
where ${}_{2}F_{1}(\alpha,\beta;\gamma;x)$ is the Gauss hypergeometric series
\[ {}_{2} F_{1}(\alpha , \beta  ; \gamma ; x ) = \sum_{n=0}^\infty \frac{ (\alpha )_{n} (\beta )_{n} }{ (\gamma )_{n}} \frac{x^n}{n!} \quad (|x|<1 ). \]
Here, $(\alpha )_{0}=1$ and $(\alpha )_{n}=\alpha (\alpha +1) \cdots (\alpha +n-1) \,\,\, (n\geq 1)$.
\end{thm}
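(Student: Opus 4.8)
The plan is to deduce the formula from the classical description of the supersingular locus as the divisor of the mod-$p$ Eisenstein series, and then to pin down the resulting polynomial by recognizing it as the terminating solution of a hypergeometric differential equation.

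First I would convert $ss_p$ into a modular form. By the Clausen--von Staudt congruence one has $E_{p-1}\equiv 1 \pmod p$ as a $q$-series, so the reduction $\tilde E_{p-1}$ is the Hasse invariant: a weight $p-1$ form in characteristic $p$ whose divisor is exactly the supersingular locus, each supersingular point occurring to first order (on the $j$-line, with the usual stabilizer weights at $j=0$ and $j=1728$). Writing $p-1 = 12m + 4\delta + 6\varepsilon$ and using $j = E_4^3/\Delta$, every weight $p-1$ form equals $E_4^{\delta}E_6^{\varepsilon}\Delta^{m}$ times a polynomial in $j$ of degree $\le m$; hence there is a monic $Q\in\mathbb{Q}[X]$ of degree $m$ with $E_{p-1} = E_4^{\delta}E_6^{\varepsilon}\Delta^{m}\,Q(j)$. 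Since $E_4$ vanishes only at $j=0$ and $E_6$ only at $j=1728$, and since $j=0$ (resp. $j=1728$) is supersingular precisely when $\delta=1$ (resp. $\varepsilon=1$), reduction mod $p$ gives $ss_p(X)\equiv X^{\delta}(X-1728)^{\varepsilon}\,\tilde Q(X)\pmod p$, where $\tilde Q$ is the reduction of $Q$. It remains to compute $Q \bmod p$.

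Next I would produce the hypergeometric equation. Using $j$ as a local coordinate away from the elliptic points, the classical Schwarzian/Picard--Fuchs theory for the $j$-line shows that the uniformizing second order ODE, written in the variable $x=1728/j$, is the hypergeometric equation with parameters $(\tfrac{1}{12},\tfrac{5}{12};1)$, reflecting the exponent differences $0,\tfrac12,\tfrac13$ at the cusp $j=\infty$, at $j=1728$, and at $j=0$. A standard computation with the Serre derivative $\vartheta = q\tfrac{d}{dq}-\tfrac{k}{12}E_2$ then shows that dividing the weight $p-1$ form by the twist $E_4^{\delta}E_6^{\varepsilon}$ shifts the local exponents so that $Q$, viewed as a function of $x$, satisfies the hypergeometric equation with $\gamma=1$, $\beta=\frac{5}{12}-\frac{2\delta-3\varepsilon}{6}$, and first parameter $\equiv -m \pmod p$. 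I would then reduce this ODE mod $p$; because $m<p$, the recursion coefficients (with denominators $n^2$ for $1\le n\le m$) are invertible, so the reduced equation has a one-dimensional space of polynomial solutions and hence a unique monic one of degree $m$, namely $\tilde Q$.

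Finally I would match solutions. In characteristic zero the terminating series $\Psi(X) := X^{m}\,{}_{2}F_{1}(-m,\beta;1;1728/X)$ is the monic degree-$m$ polynomial solution of the same hypergeometric equation, and its coefficients are $p$-integral for $m<p$, so its reduction is a monic degree-$m$ polynomial solution of the reduced ODE and therefore equals $\tilde Q$. Substituting $\tilde Q=\Psi$ into $ss_p = X^{\delta}(X-1728)^{\varepsilon}\tilde Q$ yields (\ref{eq:hrepofssp}), the extra factor $X^{m}$ in $X^{m+\delta}$ arising exactly from clearing the powers of $X^{-1}$ in the ${}_2F_1$. The step I expect to be the main obstacle is the middle one: correctly bookkeeping how the twist $E_4^{\delta}E_6^{\varepsilon}$ and the weight $p-1$ modify the three local exponents, so as to land on precisely $\beta=\frac{5}{12}-\frac{2\delta-3\varepsilon}{6}$ and first parameter $\equiv -m$, and checking that the reduction mod $p$ genuinely forces termination at degree $m$ (equivalently, that $\tfrac{1}{12}\equiv -m \pmod p$ collapses the nonterminating period solution to $\Psi$) rather than introducing spurious higher-degree solutions.
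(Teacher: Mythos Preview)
Your outline is sound and reaches the same endpoint, but it takes a substantially longer route than the paper. The paper's proof is essentially two lines: it quotes \cite[Proposition~5]{kaneko1998supersingular}, which already asserts that $ss_p(X)\equiv U_{m+\delta+\varepsilon}^{\varepsilon}(X)\pmod p$ where $U_n^{\varepsilon}$ is the polynomial truncation of $X^n\,{}_2F_1(\tfrac{1}{12},\tfrac{5}{12};1;1728/X)$ (or of the $(\tfrac{7}{12},\tfrac{11}{12})$ variant when $\varepsilon=1$), and then checks case by case on $p\bmod 12$ that the fixed rational parameters $(\tfrac{1}{12},\tfrac{5}{12})$ or $(\tfrac{7}{12},\tfrac{11}{12})$ reduce mod $p$ to the pair $(-m,\ \tfrac{5}{12}-\tfrac{2\delta-3\varepsilon}{6})$ (up to swapping, which is harmless by symmetry of ${}_2F_1$). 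Your proposal, by contrast, re-derives the Kaneko--Zagier proposition from scratch via the Hasse invariant and the Picard--Fuchs equation; this buys self-containment at the cost of length.

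One point in your middle step deserves tightening. In characteristic $0$ the polynomial $Q$ extracted from $E_{p-1}=E_4^{\delta}E_6^{\varepsilon}\Delta^{m}Q(j)$ does \emph{not} satisfy any distinguished second-order ODE: $E_{p-1}$ is just some modular form, not a period, and the Serre derivative does not annihilate it. What makes the argument work is that \emph{after} reduction one has $\tilde E_{p-1}\equiv 1$, and it is the Hasse invariant in characteristic $p$ that is horizontal for the Gauss--Manin connection; this is what forces $\tilde Q$ to satisfy the reduced hypergeometric equation with parameters congruent to $(\tfrac{1}{12},\tfrac{5}{12};1)$ (shifted by the twist). So the ODE step should be carried out directly in characteristic $p$, not set up over $\mathbb{Q}$ and then reduced. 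Once reframed this way, your uniqueness-of-polynomial-solutions argument goes through, and the parameter identification $\tfrac{1}{12}\equiv -m\pmod p$ (and its three companions) is exactly the short case analysis the paper writes out explicitly.
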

We note that the series ${}_{2} F_{1}(\alpha , \beta  ; \gamma ; x ) $ becomes a polynomial when $\alpha $ or $\beta $ is a negative integer and $\gamma $ is non-negative integer.
We give a proof of this theorem in Section \ref{Preliminaries} for convenience of the reader.

From this hypergeometric expression of $ss_{p} (X)$ we easily see that
\[ \deg ss_{p} (X) =m +\delta +\varepsilon = \frac{p-1}{12} +\frac{1}{4}\left( 1- \left( \frac{-1}{p}\right)  \right) +\frac{1}{3}\left( 1- \left( \frac{-3}{p}\right)  \right). \]
(See also \cite{eichler1938uber, igusa1958class} and \cite[Ch.V \S 4]{silverman2009arithmetic}.)
The polynomial $ss_{p}(X)$ factors into linear and quadratic polynomials in $\mathbb{F}_{p}[X]$ by the result of Deuring \cite{deuring1941die} that all supersingular $j$-invariants lie in $\mathbb{F}_{p^2}$.
Let $h(\sqrt{-d})$ denote the class number of the imaginary quadratic field $\mathbb{Q}(\sqrt{-d})$. 
The following theorem is essentially due to Deuring~(\cite[\S 10]{deuring1941die} and \cite[eq.(9)]{deuring1944die}), but is expressed in somewhat different form (see also \cite[Lemma 2.6]{pizer1978note}, \cite[p.97]{brillhart2004class}).
\begin{thm}\label{thm:NofLFofssp}
If $p\geq 5$, the number $L(p)$ of supersingular $j$-invariants that lie in $\mathbb{F}_{p}$ i.e. the number of linear factors of $ss_{p}(X)$ is 
\begin{align*}
L(p) &= \frac{1}{4} \left\{ 2+ \left( 1 - \left( \frac{-1}{p}\right) \right) \left( 2 + \left( \frac{-2}{p}\right) \right) \right\} h(\sqrt{-p}) \\
        &= \begin{cases}
              \tfrac{1}{2} h(\sqrt{-p}) & \text{if $p \equiv 1 \pmod{4}$,}\\
              2 h(\sqrt{-p})  & \text{if $p \equiv 3 \pmod{8}$,}\\
              h(\sqrt{-p})  & \text{if $p \equiv 7 \pmod{8}$.}
              \end{cases}
\end{align*}
\end{thm}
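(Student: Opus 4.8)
The plan is to translate the count of $\mathbb{F}_p$-rational supersingular $j$-invariants into a class-number computation via Deuring's theory of supersingular reduction, and then to descend from isomorphism classes to $j$-invariants by a quadratic-twist argument. First I would record the elementary fact that for $p \geq 5$ an elliptic curve $E/\mathbb{F}_p$ is supersingular if and only if its trace of Frobenius vanishes, $a_p = 0$: indeed $p \mid a_p$ while the Hasse bound forces $|a_p| \leq 2\sqrt{p} < p$. Hence the Frobenius $\pi$ satisfies $\pi^2 = -p$, so with $K = \mathbb{Q}(\sqrt{-p})$ one has $\mathbb{Z}[\sqrt{-p}] = \mathbb{Z}[\pi] \subseteq \operatorname{End}_{\mathbb{F}_p}(E) \subseteq \mathcal{O}_K$. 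Since a supersingular $j$-invariant lying in $\mathbb{F}_p$ is precisely the invariant of a curve admitting an $\mathbb{F}_p$-model (any such model being supersingular, hence of trace $0$), counting these invariants reduces to controlling the $\mathbb{F}_p$-isomorphism classes of supersingular curves over $\mathbb{F}_p$.

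Next I would invoke the Deuring correspondence, in the sharpened form of Waterhouse and Schoof, which gives for each order $\mathcal{O}$ with $\mathbb{Z}[\sqrt{-p}] \subseteq \mathcal{O} \subseteq \mathcal{O}_K$ a bijection between the $\mathbb{F}_p$-isomorphism classes with $\operatorname{End}_{\mathbb{F}_p}(E) = \mathcal{O}$ and the ideal class group of $\mathcal{O}$. Writing $N$ for the total number of such classes, this yields
\[ N = \sum_{\mathbb{Z}[\sqrt{-p}] \subseteq \mathcal{O} \subseteq \mathcal{O}_K} h(\mathcal{O}). \]
When $p \equiv 1 \pmod 4$ one has $-p \equiv 3 \pmod 4$, so $\mathbb{Z}[\sqrt{-p}] = \mathcal{O}_K$ is already maximal and $N = h(\sqrt{-p})$. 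When $p \equiv 3 \pmod 4$ the index $[\mathcal{O}_K : \mathbb{Z}[\sqrt{-p}]] = 2$ is prime, so exactly two orders occur and $N = h(\sqrt{-p}) + h(\mathbb{Z}[\sqrt{-p}])$; here I would apply the standard formula for a non-maximal order, $h(\mathbb{Z}[\sqrt{-p}]) = 2\,h(\sqrt{-p})\bigl(1 - \tfrac12\left(\tfrac{-p}{2}\right)\bigr)$ (the unit index being $1$ since $p > 3$), which by the Kronecker symbol evaluates to $3\,h(\sqrt{-p})$ for $p \equiv 3 \pmod 8$ and to $h(\sqrt{-p})$ for $p \equiv 7 \pmod 8$.

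The remaining step, needing the most care, is the passage from $N$ to $L(p)$ through twists. Because supersingularity depends only on $j$, every twist of a supersingular curve is again supersingular with $a_p = 0$, so all twists of a fixed invariant contribute to $N$; the number of $\mathbb{F}_p$-forms is $\bigl|H^1(\operatorname{Gal}(\overline{\mathbb{F}}_p/\mathbb{F}_p), \operatorname{Aut})\bigr| = |\mathbb{F}_p^\times / (\mathbb{F}_p^\times)^n|$ with $n = |\operatorname{Aut}|$. For $j \neq 0, 1728$ this is $\gcd(2, p-1) = 2$. The delicate point is that the two exceptional invariants also contribute $2$: the curve $j = 1728$ is supersingular only for $p \equiv 3 \pmod 4$, where $\gcd(4, p-1) = 2$, and $j = 0$ is supersingular only for $p \equiv 2 \pmod 3$, where $\gcd(6, p-1) = 2$. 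Thus every supersingular $j \in \mathbb{F}_p$, without exception, splits into exactly two $\mathbb{F}_p$-isomorphism classes, giving $N = 2\,L(p)$.

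Combining these gives $L(p) = \tfrac12 h(\sqrt{-p})$ for $p \equiv 1 \pmod 4$, and $L(p) = \tfrac12\bigl(h(\sqrt{-p}) + h(\mathbb{Z}[\sqrt{-p}])\bigr)$ for $p \equiv 3 \pmod 4$, equal to $2\,h(\sqrt{-p})$ when $p \equiv 3 \pmod 8$ and to $h(\sqrt{-p})$ when $p \equiv 7 \pmod 8$; a direct expansion of the Legendre symbols then confirms agreement with the single closed form in the statement. I expect the main obstacle to be the twist bookkeeping at $j = 0$ and $j = 1728$—checking both that all their twists retain $a_p = 0$ and that the automorphism-group orders conspire with the congruence on $p$ to produce exactly two forms—together with citing the Deuring--Waterhouse count in precisely the unweighted normalization $h(\mathcal{O})$ used above.
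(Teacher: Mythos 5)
Your argument is essentially correct, but there is nothing in the paper to compare it against: the author states this theorem as a known result of Deuring and points to \cite[\S 10]{deuring1941die}, \cite[eq.(9)]{deuring1944die}, \cite[Lemma 2.6]{pizer1978note} and \cite[p.97]{brillhart2004class} for proofs, giving none of his own. Your route is the modern one: every supersingular curve over $\mathbb{F}_{p}$ has $a_{p}=0$ by Hasse, the number $N$ of $\mathbb{F}_{p}$-isomorphism classes in that isogeny class is the Kronecker class number $H(-4p)=\sum_{\mathbb{Z}[\sqrt{-p}]\subseteq\mathcal{O}\subseteq\mathcal{O}_{K}}h(\mathcal{O})$, and the twist count at $j=0$ and $j=1728$ collapses to $2$ exactly under the congruences that make those invariants supersingular, so $N=2L(p)$; the conductor-$2$ class number formula then yields the three cases, and the closed form checks out. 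The one step you should not wave at is the identification $N=H(-4p)$: the simply transitive class-group action on curves with prescribed $\mathbb{F}_{p}$-endomorphism ring in the \emph{supersingular} case over the prime field is a genuine theorem (Waterhouse, Thm.~4.5; Schoof 1987, Thm.~4.6, which states the count for $t=0$, $q=p>3$ directly), not a formal consequence of CM theory for ordinary curves, so it needs an explicit citation. By contrast, Deuring's original argument counts optimal embeddings into quaternion orders, and Brillhart--Morton count $\mathbb{F}_{p}$-rational roots of the Hasse invariant $W_{(p-1)/2}(\lambda)$ via binary quadratic forms; the latter is the approach this paper actually builds on (Theorem~\ref{thm:numoflinw} is their count, and the proof of Theorem~\ref{thm:NofLF} imitates their method at levels $2$ and $3$), so your proof buys conceptual cleanliness at the cost of importing the Deuring--Waterhouse--Schoof machinery rather than staying in the polynomial-factorization framework used in the rest of the paper.
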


Let $\mathbb{M}$ be the Monster group. It is the largest sporadic finite simple group and has order
\begin{align*}
\# \mathbb{M} &=808017424794512875886459904961710757005754368000000000 \\
&=2^{46}\cdot 3^{20}\cdot 5^9\cdot 7^6\cdot 11^2\cdot 13^3\cdot 17\cdot 19\cdot 23\cdot 29\cdot 31\cdot 41\cdot 47\cdot 59\cdot 71.
\end{align*}
In 1975, A. Ogg noticed that the prime divisors $p$ of $\# \mathbb{M}$ agree with those $p$ such that all characteristic $p$ supersingular $j$-invariants that lie in $\mathbb{F}_{p}$ (see \cite[p.7]{ogg1975automorphismes}, \cite{duncan2016jack}).
Ogg offered a bottle of Jack Daniels for an explanation of this coincidence; therefore this is called ``The Jack Daniels Problem''.
By the definition of the polynomial $ss_{p}(X)$ and the number $L(p)$, Ogg's observation is paraphrased as the following theorem.
\begin{thm}\label{thm:NofLFofLevel1ssp}
For a prime number $p$,
\[ \deg ss_{p}(X) = L(p) \iff p\mid \# \mathbb{M}. \]
\end{thm}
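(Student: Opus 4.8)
The plan is to exploit the fact that $\deg ss_p(X)$ grows linearly in $p$ while $L(p)$ grows only like $\sqrt{p}\log p$, so that the equality $\deg ss_p(X) = L(p)$ can hold for at most finitely many primes, which one then enumerates.

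First I would carry out the reduction. Since all supersingular $j$-invariants lie in $\mathbb{F}_{p^2}$ and the set is stable under the conjugation over $\mathbb{F}_p$, the polynomial $ss_p(X)$ factors over $\mathbb{F}_p$ into $L(p)$ linear factors together with some number $Q(p)$ of irreducible quadratic factors, whence $\deg ss_p(X) = L(p) + 2Q(p)$. Therefore $\deg ss_p(X) = L(p)$ if and only if $Q(p) = 0$, i.e. if and only if every supersingular $j$-invariant lies in the prime field $\mathbb{F}_p$, which is exactly Ogg's condition. The cases $p = 2$ and $p = 3$ are immediate from $ss_2(X) = X$ and $ss_3(X) = X$, each of degree one with its single root in the prime field; and indeed $2, 3 \mid \#\mathbb{M}$.

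For $p \geq 5$ I would compare the two explicit formulas available. From the degree formula derived from Theorem \ref{thm:DEU} we have $\deg ss_p(X) = m + \delta + \varepsilon \geq \frac{p-1}{12}$, which is linear in $p$. From Theorem \ref{thm:NofLFofssp} we have $L(p) \leq 2\,h(\sqrt{-p})$, the coefficient $2$ being the largest one occurring. Now I would invoke the Dirichlet class number formula $h(\sqrt{-p}) = \frac{w\sqrt{p}}{2\pi} L(1,\chi_{-p})$ together with an explicit upper bound of the shape $L(1,\chi_{-p}) \leq \frac{1}{2}\log p + C$, which yields $h(\sqrt{-p}) = O(\sqrt{p}\,\log p)$ with an explicit constant. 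Comparing $\frac{p-1}{12}$ against this bound produces an effective threshold $p_0$ such that $\deg ss_p(X) > L(p)$, hence $Q(p) > 0$, for every prime $p > p_0$. It then remains to check the finitely many primes $5 \leq p \leq p_0$ directly: for each such $p$ one computes $\deg ss_p(X)$ from $m,\delta,\varepsilon$ and $L(p)$ from the (tabulated or directly computed) class number $h(\sqrt{-p})$, and records whether equality holds. The verification should confirm that equality holds exactly for $p \in \{5,7,11,13,17,19,23,29,31,41,47,59,71\}$ and fails for all other primes below $p_0$, notably $37,43,53,61,67$. Combined with $p = 2,3$, this list is precisely the set of prime divisors of $\#\mathbb{M}$.

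The main obstacle I anticipate lies entirely in the quantitative last step and is twofold. First, one must secure a fully explicit and rigorous upper bound for $L(1,\chi_{-p})$ sharp enough to keep $p_0$ manageable, since a crude bound forces a comparison of $\frac{p}{12}$ against a slowly growing function and thereby pushes the threshold into the thousands. Second, one must carry out the finite verification over all primes up to $p_0$ with care, ensuring in particular that none of the non-Monster primes in that range accidentally satisfies $\deg ss_p(X) = L(p)$. The delicate point is that equality forces the class number to be anomalously large, of order $p/24$, so the whole argument hinges on the fact that $h(\sqrt{-p})$ provably cannot grow linearly in $p$.
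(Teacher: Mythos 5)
Your proposal is correct and follows exactly the route the paper indicates for this theorem (``class number estimate''), and which the paper carries out in detail for the analogous Theorem \ref{thm:relofspgp}: bound $L(p)$ by a constant multiple of $h(\sqrt{-p})$, apply the Dirichlet class number formula with the estimate $|L(1,\chi_D)|<\log|D|$ to get an effective threshold beyond which $\deg ss_p(X)>L(p)$, and verify the finitely many remaining primes directly. The reduction to $Q(p)=0$ and the handling of $p=2,3$ are likewise consistent with the paper's setup.
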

This theorem can be proved by using class number estimate, but it does not explain why such a relationship exists.

We consider analogues of these theorems in the case of higher levels or other sporadic groups.
The supersingular polynomial $ss_{p}^{(N*)}(X)$ for the Fricke group $\Gamma_{0}^{*}(N) \;(N=2,3)$ was defined by Koike~\cite{koike2009supersingular} and Sakai~\cite{sakai2011atkin}. It is derived from the invariant differential for $\Gamma_{0}^{*}(N) \;(N=2,3)$ and has hypergeometric series representation.

\begin{de}\label{def:sakaissp}
For primes $p \geq 5$, let $m_{2}=[p/8]$ and $m_{3}=[p/6]$, and set
\begin{align*}
ss^{(2*)}_{p} (X) &= X^{m_{2}+\varepsilon} (X-256)^{\nu } {}_{2}F_{1}\left( -m_{2} ,\frac{3}{8}-\frac{\varepsilon-2\nu}{4};1;\frac{256}{X} \right) \pmod{p}, \\
ss^{(3*)}_{p} (X) &= X^{m_{3}+\delta} (X-108)^{\delta } {}_{2}F_{1}\left( -m_{3} ,\frac{1}{3}+\frac{\delta}{3};1;\frac{108}{X} \right) \pmod{p} .
\end{align*}
\end{de}
For $p \in \{2,3\}$ and $N \in \{2,3\}$, we have $ss_{p}^{(N*)}(X) = X \pmod{p}$.
The degrees of these polynomials are given as follows.
\begin{align*}
\deg ss^{(2*)}_{p} (X) &= m_{2} +\varepsilon +\nu = \frac{p-1}{8} +\frac{3}{8}\left( 1- \left( \frac{-1}{p}\right)  \right) +\frac{1}{4}\left( 1- \left( \frac{-2}{p}\right)  \right), \\[3pt]
\deg ss^{(3*)}_{p} (X) &= m_{3} + 2\delta = \frac{p-1}{6} +\frac{2}{3}\left( 1- \left( \frac{-3}{p}\right)  \right) .
\end{align*}
We shall derive the number of linear factors of these polynomials explicitly as a linear combination of class numbers of the imaginary quadratic fields:
\begin{thm}\label{thm:NofLF}
If $p\geq 5$ is a prime, then the number of linear factors $L^{(N*)}(p)$ of $ss_{p}^{(N*)}(X) \pmod{p}$ for $N=2 \text{ and } 3$ are
\begin{align*}
L^{(2*)}(p) &= \frac{1}{8} \left\{ 2+ \left( 1 - \left( \frac{-1}{p}\right) \right) \left( 4 + \left( \frac{-2}{p}\right) \right) \right\} h(\sqrt{-p}) +\frac{1}{4} h(\sqrt{-2p}) ,\\[3pt]
L^{(3*)}(p) &= \delta  L(p) +\frac{1}{8} \left\{ 2+ \left( 1 + \left( \frac{-1}{p}\right) \right) \left( 2 + \left( \frac{-2}{p}\right) \right) \right\} h(\sqrt{-3 p}). \\
\end{align*}
\end{thm}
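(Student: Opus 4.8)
The plan is to interpret each root of $ss_p^{(N*)}(X)$ geometrically as a supersingular point on the Fricke curve $X_0^*(N)=X_0(N)/\langle W_N\rangle$ in characteristic $p$, and then to count how many of these points are defined over $\mathbb{F}_p$; by definition that count is exactly $L^{(N*)}(p)$. For $p\geq 5$ and $N\in\{2,3\}$ the level structure is \'etale, so a supersingular point of $X_0(N)$ is a pair $(E,C)$ with $E/\overline{\mathbb{F}}_p$ supersingular and $C\subset E[N]$ cyclic of order $N$, and there are $N+1$ of them over each supersingular $j$-invariant. The Atkin--Lehner involution sends $(E,C)$ to $(E/C,\,E[N]/C)$, and a point of $X_0^*(N)(\overline{\mathbb{F}}_p)$ is $\mathbb{F}_p$-rational precisely when the Frobenius $\phi$ preserves the orbit $\{(E,C),\,W_N(E,C)\}$, that is, $\phi(E,C)\in\{(E,C),\,W_N(E,C)\}$. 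First I would read off the shape from Definition \ref{def:sakaissp}, so that the explicit prefactors $X^{m_N+\cdots}$, $(X-256)^\nu$, $(X-108)^\delta$ are identified with the elliptic fixed points of $\Gamma_0^*(N)$ and set aside for separate bookkeeping.

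The heart of the argument is the resulting dichotomy on Frobenius. In the \emph{split case} $\phi(E,C)=(E,C)$ both $E$ and $C$ are already defined over $\mathbb{F}_p$, so $E$ is supersingular over $\mathbb{F}_p$ with trace $a_p=0$ and $\phi^2=-p$; these points are those with CM by the order $\mathbb{Z}[\sqrt{-p}]\subset\mathbb{Q}(\sqrt{-p})$ and are therefore governed by $h(\sqrt{-p})$, which lets me re-use the level-$1$ bookkeeping of Theorem \ref{thm:NofLFofssp}. In the \emph{swap case} $\phi(E,C)=W_N(E,C)\neq(E,C)$, the degree-$N$ isogeny realizing $W_N$, followed by $\phi$ and its dual, composes to an endomorphism of reduced norm $Np$, so $(E,C)$ carries an optimal embedding of an order in $\mathbb{Q}(\sqrt{-Np})$; these points are governed by $h(\sqrt{-Np})$ and account for the new terms $\tfrac14 h(\sqrt{-2p})$ and the $h(\sqrt{-3p})$-term. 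I would count each family through Deuring's correspondence together with Eichler's theory of optimal embeddings of imaginary quadratic orders into the Eichler order of level $N$ in the quaternion algebra $B_{p,\infty}$, which expresses the number of such embeddings as the relevant class number.

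Assembling $L^{(N*)}(p)$ then amounts to summing the split and swap contributions and correcting at the elliptic points. Here I must (i) determine, in the split case, how many of the $N+1$ subgroups $C$ are $\phi$-stable, which is controlled by the action of $\phi$ on $E[N]$ with $\phi^2\equiv -p\pmod N$ and hence depends on $\left(\tfrac{-1}{p}\right)$ and $\left(\tfrac{-2}{p}\right)$ for $N=2$ and on $\left(\tfrac{-3}{p}\right)$ for $N=3$; (ii) account for the ramification of $W_N$ on $X_0(N)$, whose fixed points are CM points by $\sqrt{-N}$ and contribute half-integer weights; and (iii) weight every point by the order of its automorphism group, reconciling the ordinary class number $h$ with the Hurwitz-type counts that emerge from the embedding numbers, with special attention to the CM-by-$\sqrt{-1}$ and CM-by-$\sqrt{-3}$ curves responsible for the $\nu,\delta,\varepsilon$ in the prefactors. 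The precise rational coefficients $\tfrac18\{2+(1-(\tfrac{-1}{p}))(4+(\tfrac{-2}{p}))\}$ and the analogous level-$3$ coefficient should fall out of this case analysis.

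The main obstacle I anticipate is exactly this constant-chasing: matching the weighted fixed-point counts to the stated closed forms requires a complete case split on $p$ modulo $8$ (for the behaviour of $2$ and of $W_2$) and modulo $3$ and $4$ (for $W_3$ and the CM-by-$\sqrt{-3}$ point), and in each case verifying that the automorphism weights, the $W_N$-ramification, and the normalization relating $h(\sqrt{-Np})$ to its Hurwitz count combine to a single uniform expression. The clean term $\delta\,L(p)$ in $L^{(3*)}(p)$ is a useful structural signpost: when $\left(\tfrac{-3}{p}\right)=-1$ the matrix $\phi$ acts on $E[3]$ with eigenvalues $\pm1$, so each $\mathbb{F}_p$-rational supersingular $j$-invariant carries exactly two Frobenius-stable subgroups of order $3$, which $W_3$ then identifies into one point apiece; confirming this identification, rather than re-deriving that piece from scratch, would streamline the level-$3$ computation considerably.
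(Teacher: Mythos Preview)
Your proposal takes a genuinely different route from the paper. The paper's proof is entirely polynomial-theoretic: it never invokes the modular interpretation of $X_0^*(N)$. Instead it first proves an algebraic relation (Proposition~\ref{prop:algrelofssp}) of the form
\[
(X-64)^{m+\varepsilon+\nu}\,ss_p^{(2*)}\!\left(\tfrac{X^2}{X-64}\right)=X^{\varepsilon}(X-128)^{\nu}\,ss_p^{(2)}(X)\pmod p
\]
via Kummer's quadratic transformation of ${}_2F_1$, then rewrites $ss_p^{(2)}(Y)$ as $Y^{2m+\varepsilon+\nu}P_{2m+\nu}(1-128/Y)$ and exploits the parity $P_n(-x)=(-1)^nP_n(x)$ to factor $P_{2m+\nu}(X)$ over $\mathbb{F}_p$ into blocks $X^2-\ell_r$, $X^2-\alpha_s$, $(X^2+\beta_tX+\gamma_t)(X^2-\beta_tX+\gamma_t)$. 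Tracking how each block behaves under the substitution $X\mapsto Y^2/(Y-64)$ gives $L^{(2*)}(p)=\varepsilon+\nu+A+B$, where $A$ counts the $\ell_r$-blocks and $B$ the $\alpha_s$-blocks; the class-number input comes entirely from quoting Brillhart--Morton for $A$ (via $L^{(2)}(p)$) and Morton's count $B(p,[p/4])=\tfrac14\{h(\sqrt{-2p})-2(\varepsilon+\nu)\}$ for $B$. The level-$3$ case is parallel.

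Your geometric plan via the Frobenius/Atkin--Lehner dichotomy and optimal embeddings into the Eichler order of level $N$ in $B_{p,\infty}$ is the ``right'' conceptual explanation for why $h(\sqrt{-p})$ and $h(\sqrt{-Np})$ appear, and it is essentially the approach of Aricheta's paper cited in the introduction. What it buys is insight and a clear path to higher $N$; what it costs is exactly the constant-chasing you flag, plus one preliminary step you gloss over: you must first check that the hypergeometric polynomial of Definition~\ref{def:sakaissp} really has as its roots the supersingular $j_N^*$-values (the paper establishes this only later, in Section~\ref{Conjectures and Observations}, via Proposition~\ref{prop:algrelofssp}). The paper's approach, by contrast, avoids all of the automorphism-weight and $W_N$-ramification bookkeeping by outsourcing the arithmetic to the Brillhart--Morton theorems, at the price of being less transparent about the source of the class numbers.
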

An exact analogue of Theorem \ref{thm:NofLFofLevel1ssp} for the sporadic groups $\mathbb{B}$ and $Fi^{\prime}_{24}$ holds true.
Where $\mathbb{B}$ be the Baby monster group and $Fi^{\prime}_{24}$ be the largest of Fischer's groups. The orders of these groups are given by
\begin{align*}
\# \mathbb{B} &= 4154781481226426191177580544000000 \\
&=2^{41}\cdot 3^{13}\cdot 5^{6} \cdot 7^{2} \cdot 11\cdot 13 \cdot 17 \cdot 19 \cdot 23 \cdot 31 \cdot 47 , \\
\# Fi^{\prime}_{24} &= 1255205709190661721292800 \\
&= 2^{21}\cdot 3^{16}\cdot 5^{2}\cdot 7^{3}\cdot 11\cdot 13\cdot 17\cdot 23\cdot 29.
\end{align*}

\begin{thm}\label{thm:relofspgp}
For a prime number $p$,
\begin{align*}
\deg ss^{(2*)}_{p}(X) = L^{(2*)}(p) &{}\iff p\mid \# \mathbb{B} ,\\
\deg ss^{(3*)}_{p}(X) = L^{(3*)}(p) &{}\iff p\mid \# Fi^{\prime}_{24}.
\end{align*}
\end{thm}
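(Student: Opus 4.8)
The plan is to reduce each equivalence to a sharp comparison between the two explicit quantities $\deg ss_p^{(N*)}(X)$ and $L^{(N*)}(p)$. Since the supersingular invariants all lie in $\mathbb{F}_{p^2}$ and $ss_p^{(N*)}(X) \in \mathbb{F}_p[X]$ is separable, this polynomial factors over $\mathbb{F}_p$ into linear factors (roots in $\mathbb{F}_p$) and irreducible quadratic factors (conjugate pairs of roots in $\mathbb{F}_{p^2}\setminus\mathbb{F}_p$). Writing $Q^{(N*)}(p)$ for the number of quadratic factors, we have $\deg ss_p^{(N*)}(X) = L^{(N*)}(p) + 2\,Q^{(N*)}(p)$, so in particular $\deg ss_p^{(N*)}(X) \ge L^{(N*)}(p)$ always, with equality if and only if $Q^{(N*)}(p)=0$. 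Thus, setting $D^{(N*)}(p) := \deg ss_p^{(N*)}(X) - L^{(N*)}(p) \ge 0$, the theorem amounts to proving that $D^{(2*)}(p)=0$ precisely for the eleven primes dividing $\#\mathbb{B}$, and $D^{(3*)}(p)=0$ precisely for the nine primes dividing $\#Fi'_{24}$.

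Next I would write $D^{(N*)}(p)$ out explicitly. Substituting the degree formulas stated before Theorem \ref{thm:NofLF} and the class-number formulas of Theorem \ref{thm:NofLF}, $D^{(N*)}(p)$ becomes an explicit expression in $p$, in the symbols $\nu,\delta,\varepsilon$ (hence in the residue of $p$ modulo $24$), and in the class numbers $h(\sqrt{-p})$, $h(\sqrt{-2p})$, $h(\sqrt{-3p})$. The ``$\Leftarrow$'' directions are then a finite verification: for each prime in the two lists one evaluates the relevant class numbers and checks $D^{(N*)}(p)=0$. For the ``$\Rightarrow$'' directions I must instead show $D^{(N*)}(p)>0$ for every prime $p$ not in the respective list, and I would split this into small and large primes.

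For large primes the argument is asymptotic. The degree grows linearly, $\deg ss_p^{(2*)}(X)\ge (p-1)/8$ and $\deg ss_p^{(3*)}(X)\ge (p-1)/6$ directly from the degree formulas, whereas each class number occurring in $L^{(N*)}(p)$ obeys an effective upper bound of the shape $h(\sqrt{-d}) < \tfrac{1}{\pi}\sqrt{d}\,(\log d + 2)$, coming from the analytic class number formula together with an explicit estimate for $L(1,\chi_{-d})$. Because the coefficients multiplying the class numbers in Theorem \ref{thm:NofLF} are bounded by fixed constants (at most $\tfrac32 h(\sqrt{-p})+\tfrac14 h(\sqrt{-2p})$ in the case $N=2$, and at most $2h(\sqrt{-p})+h(\sqrt{-3p})$ in the case $N=3$), one gets $L^{(N*)}(p)=O(\sqrt{p}\,\log p)$, which is dominated by the linear lower bound on the degree once $p$ exceeds an explicit threshold $p_0$. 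Solving the resulting inequality produces a concrete $p_0$, and for the finitely many primes $5\le p\le p_0$ not already appearing in the lists one checks $D^{(N*)}(p)>0$ directly.

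The main obstacle is this effective large-$p$ estimate, together with keeping $p_0$ small enough that the residual finite check stays feasible: one needs a clean explicit upper bound for $L(1,\chi_{-d})$ and careful bookkeeping both of the constant factors and of the congruence conditions defining $\nu,\delta,\varepsilon$, so that the gap $D^{(N*)}(p)$ is provably positive for all $p>p_0$ uniformly across residue classes. A secondary point to confirm is the separability and $\mathbb{F}_{p^2}$-rationality of the roots of $ss_p^{(N*)}(X)$, which guarantees the clean relation $\deg ss_p^{(N*)}(X)=L^{(N*)}(p)+2\,Q^{(N*)}(p)$ underlying the whole reduction; this follows from the interpretation of these polynomials as level-$N$ supersingular polynomials developed in the preliminary sections.
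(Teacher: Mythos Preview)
Your proposal is correct and follows essentially the same approach as the paper: bound $L^{(N*)}(p)$ above by a constant times $\sqrt{p}\log p$ via the analytic class number formula and an explicit estimate on $L(1,\chi)$, compare against the linear lower bound $(p-1)/8$ or $(p-1)/6$ on the degree to obtain an explicit threshold $p_0$, and then verify the finitely many remaining primes directly. The paper uses the specific bound $h(\sqrt{-Np})<\tfrac{2\sqrt{Np}}{\pi}\log(4Np)$ (from $|L(1,\chi_D)|<\log|D|$) and arrives at $p_0=10321$ for $N=2$ and $p_0=30341$ for $N=3$; your preliminary remark that $D^{(N*)}(p)\ge 0$ via separability and $\mathbb{F}_{p^2}$-rationality is a nice conceptual addition but is not actually needed, since the finite check handles both directions at once.
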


We briefly describe the contents of this paper. 
In Section \ref{Preliminaries}, we will introduce the results on the factorization of the Legendre polynomials by Brillhart and Morton, 
and the results on the supersingular polynomials for congruence subgroups of low levels by Tsutsumi.
Together with Definition \ref{def:sakaissp} by Koike and Sakai, these are the essential tools to prove our main theorems.

In Section \ref{Proof of main theorems}, we prove Theorem \ref{thm:NofLF} and \ref{thm:relofspgp} by combining the results in the previous section.
Although the main theme of this paper is these theorems, we also provide various conjectures in Section \ref{Conjectures and Observations}. 
For instance, when the case of levels $N=5 \text{ and } 7$, the sporadic groups Harada-Norton group and Held group appear in the conjecture of the analogue of Theorem \ref{thm:relofspgp} respectively. Besides, more interestingly, it is expected that so-called Ap\'{e}ry-like numbers appear in the coefficients of the squares of these supersingular polynomials.
Finally, we close this paper with an observations of a curious ``duality'' for the primes and the levels of supersingular polynomials.

Quite recently, the author learned from Prof. Ken Ono (via Prof. Kaneko) about the paper \cite{aricheta2018supersingular} by Aricheta.
There a result on the number of supersingular points on some modular curves \textit{not} defined over $\mathbb{F}_{p}$ is presented, and this is very closely related to our Theorem \ref{thm:relofspgp}.


\section{Preliminaries}\label{Preliminaries}
We first prove the hypergeometric expression of the supersingular polynomial $ss_{p}(X)$.
\begin{proof}[Proof of Theorem \ref{thm:DEU}]
We define the monic polynomial $U_{n}^{\varepsilon }(X)$ of degree $n\geq 0$ by
\begin{align*}
X^{n}\, {}_{2}F_{1}\left( \tfrac{1}{12} ,\tfrac{5}{12};1;\tfrac{1728}{X} \right) &= U_{n}^{0}(X) + O \left( \tfrac{1}{X} \right)  , \\
X^{n-1} (X-1728)\, {}_{2}F_{1}\left( \tfrac{7}{12} ,\tfrac{11}{12};1;\tfrac{1728}{X} \right) &= U_{n}^{1}(X) + O \left( \tfrac{1}{X} \right) .
\end{align*}
By \cite[Proposition 5]{kaneko1998supersingular}, we have $ss_{p}(X) = U_{m +\delta +\varepsilon}^{\varepsilon }(X) \pmod{p}$.
Since $p-1=12m +4\delta +6\varepsilon$, the first two parameters of the hypergeometric series in (\ref{eq:hrepofssp}) reduce modulo $p$ to
\begin{align*}
\left( -m ,\frac{5}{12}-\frac{2\delta-3\varepsilon}{6} \right) \equiv \begin{cases}
(\tfrac{1}{12},\tfrac{5}{12}) \pmod{p} & \text{if $p \equiv 1 \pmod{12}$,} \\
(\tfrac{5}{12},\tfrac{1}{12}) \pmod{p} & \text{if $p \equiv 5 \pmod{12}$,}  \\
(\tfrac{7}{12},\tfrac{11}{12}) \pmod{p} & \text{if $p \equiv 7 \pmod{12}$,}  \\
(\tfrac{11}{12},\tfrac{7}{12}) \pmod{p} & \text{if $p \equiv 11 \pmod{12}$.} 
\end{cases}
\end{align*}
Since ${}_{2}F_{1}(a,b;c;x) = {}_{2}F_{1}(b,a;c;x) $, we see that $U_{m +\delta +\varepsilon}^{\varepsilon }(X)$ is congruent to the left-hand side of  (\ref{eq:hrepofssp}) modulo $p$.
\end{proof}
Precisely, Deuring proved in \cite{deuring1941die} that the polynomial $H_{p-1}(X)$ of \cite[Proposition 5]{kaneko1998supersingular} is equal to $ss_{p}(X)$.


\subsection{Factorization of the Legendre polynomials}
Using the theory of elliptic curves, Brillhart and Morton determined the number of linear factors of the following polynomials $W_{m}(x) \pmod{p}$:
\begin{equation}
W_{m}(X) := \sum_{r=0}^{m} \binom{m}{r}^2 X^{r} = {}_{2}F_{1} (-m, -m;1;X). 
\end{equation}
It is well known that the Hasse invariant for elliptic curves in Legendre form
\[  E_{\lambda } :  y^2 = x(x-1)(x-\lambda ) \]
is $W_{(p-1)/2}(\lambda )$ and the elliptic curve $E_{\lambda }$ is supersingular if and only if $W_{(p-1)/2}(\lambda ) \equiv 0 \pmod{p}$. 
\begin{thm}[Brillhart, Morton \cite{brillhart2004class}]\label{thm:numoflinw}
 Let $p \geq 5$ be a prime and $N_{1}(p,m)$ be the number of linear factors of $W_{m}(X) \pmod{p}$. Then
\begin{align*}
N_{1} \left( p, \left[ \frac{p}{4} \right] \right) &= \frac{1}{4} \left\{ 2+ \left( 1 - \left( \frac{-1}{p}\right) \right) \left( 4 + \left( \frac{-2}{p}\right) \right) \right\} h(\sqrt{-p}) - \varepsilon ,\\
N_{1} \left( p, \left[ \frac{p}{3} \right] \right) &= \delta \, (2L(p)-1),
\end{align*}
where the number $L(p)$ appears in Theorem \ref{thm:NofLFofssp}.
\end{thm}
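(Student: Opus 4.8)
The plan is to use the Legendre/Hasse dictionary recorded just above: the roots of $W_{(p-1)/2}(X)\pmod p$ are exactly the supersingular values of the Legendre parameter $\lambda$, while the number of supersingular $j$-invariants in $\mathbb{F}_p$ is given by Theorem~\ref{thm:NofLFofssp}. The first task is to connect the two polynomials $W_{[p/4]}$ and $W_{[p/3]}$, whose degrees are \emph{not} $(p-1)/2$, to this picture. The key device is that modulo $p$ the parameter $-m$ may be replaced by a fixed rational: one checks $-[p/4]\equiv\tfrac14$ or $\tfrac34$ and $-[p/3]\equiv\tfrac13$ or $\tfrac23\pmod p$ according to the residue of $p$, and the vanishing of a Pochhammer factor forces the corresponding series to truncate at precisely degree $[p/4]$ resp. $[p/3]$. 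Thus $W_{[p/4]}(X)\equiv {}_{2}F_{1}(\tfrac14,\tfrac14;1;X)$ (or its $\tfrac34$ analogue), $W_{[p/3]}(X)\equiv {}_{2}F_{1}(\tfrac13,\tfrac13;1;X)$ (or $\tfrac23$), and $W_{(p-1)/2}(X)\equiv {}_{2}F_{1}(\tfrac12,\tfrac12;1;X)\pmod p$.

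For $W_{[p/4]}$ I would then invoke the classical quadratic transformation ${}_{2}F_{1}(\tfrac12,\tfrac12;1;X)={}_{2}F_{1}(\tfrac14,\tfrac14;1;4X(1-X))$, together with Euler's transformation to pass between the $\tfrac14$ and $\tfrac34$ cases. Since $1-4X(1-X)=(1-2X)^2$, these combine into the clean congruence
\[
W_{(p-1)/2}(X)\equiv (1-2X)^{\varepsilon}\,W_{[p/4]}\!\left(4X(1-X)\right)\pmod p,
\]
where the degree-two map $X\mapsto 4X(1-X)$ is the quotient by the anharmonic involution $\lambda\mapsto 1-\lambda$ (which fixes $j$). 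A root $w$ of $W_{[p/4]}$ therefore corresponds to the unordered pair $\{\lambda,1-\lambda\}$ of supersingular $\lambda$-values above it, and $w\in\mathbb{F}_p$ exactly when this pair is Frobenius-stable. Counting such pairs splits into (a) supersingular $\lambda\in\mathbb{F}_p$ and (b) supersingular $\lambda\in\mathbb{F}_{p^2}$ with $\lambda^p=1-\lambda$; organising both around the supersingular $j\in\mathbb{F}_p$ (each carrying an $S_3$-orbit of at most six $\lambda$'s) and feeding in Theorem~\ref{thm:NofLFofssp} produces the coefficient $\tfrac14\{2+(1-(\tfrac{-1}{p}))(4+(\tfrac{-2}{p}))\}h(\sqrt{-p})$, the Legendre symbols recording how the fibre splits over $\mathbb{F}_p$. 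The factor $(1-2X)^{\varepsilon}$ isolates the fixed point $\lambda=\tfrac12$ (where $j=1728$), which is supersingular precisely when $p\equiv 3\pmod4$; accounting for this ramification yields the correction $-\varepsilon$.

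For $W_{[p/3]}$ the analogous structure comes from the order-three element of $S_3$, whose fixed points are the roots of $X^2-X+1$, i.e. the values $\lambda$ with $j=0$. Here the relevant cover of the supersingular $j$-line is a double cover ramified exactly over $j=0$: when $j=0$ is supersingular ($\delta=1$, i.e. $p\equiv 2\pmod3$) its $\mathbb{F}_p$-rational supersingular members number $2(L(p)-1)+1=2L(p)-1$, the ramified point contributing one and each of the remaining $L(p)-1$ rational points contributing a split pair, whereas for $p\equiv 1\pmod3$ ($\delta=0$) the cover is nowhere split over $\mathbb{F}_p$ and there are no linear factors. This is the content of $N_{1}(p,[p/3])=\delta\,(2L(p)-1)$, and I would establish the ``all fibres split'' behaviour over supersingular $j\in\mathbb{F}_p$ and the single ramification at $j=0$ by tracking the Frobenius action on the pertinent orbit, again using Theorem~\ref{thm:NofLFofssp} for the bookkeeping.

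The main obstacle in both cases is this Frobenius analysis: determining, for every supersingular $j\in\mathbb{F}_p$, the exact splitting type over $\mathbb{F}_p$ of the fibre of $\lambda$-values, since it is precisely this that produces the symbols $(\tfrac{-1}{p})$ and $(\tfrac{-2}{p})$ and the correct class-number coefficient, and that must be matched against the Deuring-style count behind $h(\sqrt{-p})$. Intertwined with it is the careful handling of the two CM points $j=0$ and $j=1728$, where the orbit degenerates and the covers ramify; these are the origin of the correction $-\varepsilon$ and of the special shape $2L(p)-1$. A secondary technical point is to justify rigorously the mod-$p$ truncation and the transformation identities, in particular verifying that the non-vanishing Euler prefactor affects only the single special value $X=1$ and not the count of genuine linear factors.
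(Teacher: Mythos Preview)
The paper does not prove Theorem~\ref{thm:numoflinw}: it is quoted from Brillhart--Morton \cite{brillhart2004class} and used as a black box. So there is no ``paper's own proof'' to compare against, only the original source.

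For $m=[p/4]$ your outline is essentially the Brillhart--Morton argument. The identity
\[
W_{(p-1)/2}(X)\equiv (1-2X)^{\varepsilon}\,W_{[p/4]}\bigl(4X(1-X)\bigr)\pmod p
\]
is exactly \cite[eq.~(1.2)]{brillhart2004class}, and the paper itself quotes it as equation~\eqref{eq:algrelofw2w4}. Your identification of the Frobenius analysis on the fibres of $\lambda\mapsto j$ (and the special role of $j=1728$) as the remaining work is accurate; that analysis is where the symbol $(\tfrac{-2}{p})$ and the correction $-\varepsilon$ arise, and it is genuinely the content of the Brillhart--Morton proof rather than a routine verification.

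For $m=[p/3]$ your sketch is vaguer and, as written, not quite right. You describe ``a double cover of the supersingular $j$-line ramified exactly over $j=0$'', but you have not said what the covering map is, and the natural candidates (the order-three anharmonic element, or the map $j_3\mapsto j$ of degree four) do not give a double cover. In \cite{brillhart2004class} the $[p/3]$ case is handled not through the Legendre $\lambda$ at all but through the \emph{Deuring normal form} $y^2+\alpha xy+y=x^3$: one shows that the Hasse invariant in this model is governed by $W_{[p/3]}$ evaluated at a rational function of $\alpha$, and then carries out a Frobenius orbit count on supersingular $\alpha$-values. Your assertion that ``for $p\equiv 1\pmod 3$ the cover is nowhere split over $\mathbb{F}_p$'' is precisely the nontrivial fact to be proved, and in the original it comes from this Deuring-form analysis rather than from the $S_3$ picture you invoke. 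So while the final bookkeeping $2L(p)-1=2(L(p)-1)+1$ is correct, the mechanism you propose to reach it has a gap.
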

We note that the polynomial $W_{m}(X)$ and the Legendre polynomial
\[ P_{n}(x) := \frac{1}{2^{n} n!} \frac{d^{n}}{dx^{n}} (x^{2}-1)^{n}  = \left(\frac{x-1}{2}\right)^{n} \sum_{r=0}^{n} \binom{n}{r}^2 \left( \frac{x+1}{x-1}\right)^{r} \]
satisfy the following relation:
\begin{equation}
W_{m}(X) = (1-X)^{m} P_{m} \left( \frac{1+X}{1-X} \right) . \label{eq:wandl}
\end{equation}
Moreover, Morton determined the number of certain quadratic factors of the Legendre polynomials.
\begin{thm}[Morton \cite{morton2010legendre, morton2011cubic}]\label{thm:mortonbqf}
Let $p \geq 5$ be a prime and $B(p,m)$ be the number of irreducible quadratic factors of the form $X^2 +C$ of the Legendre polynomial $P_{m}(X) \pmod{p}$. Then
\begin{align*}
B \left( p, \left[ \frac{p}{4} \right] \right) = \frac{1}{4} \left\{ h(\sqrt{-2 p}) - 2 (\varepsilon + \nu ) \right\} , 
\quad  B \left( p, \left[ \frac{p}{3} \right] \right) = \frac{1}{4} \left\{ a_{p} h(\sqrt{-3 p}) -4  \delta \right\} ,
\end{align*}
where
\[ a_{p}=\frac{1}{2} \left\{ 2+ \left( 1 + \left( \frac{-1}{p}\right) \right) \left( 2 + \left( \frac{-2}{p}\right) \right) \right\}. \]
\end{thm}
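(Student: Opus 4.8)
The plan is to convert the statement about quadratic factors of $P_{m}(X)$ into a counting problem for supersingular $\lambda$-invariants by means of the identity \eqref{eq:wandl}, and then to evaluate that count through the Deuring correspondence and a class-number formula. Under the substitution $X \mapsto \frac{1+X}{1-X}$ the two roots $\pm \mu$ of a factor $X^{2}+C$ of $P_{m}$ are carried to a pair of roots $\{\lambda,\lambda'\}$ of $W_{m}$, and a direct computation shows $\lambda' = 1/\lambda$. The factor $X^{2}+C$ is \emph{irreducible} of this special shape exactly when $\mu \in \mathbb{F}_{p^{2}}\setminus\mathbb{F}_{p}$ has trace zero, i.e. $\mu^{p}=-\mu$; feeding $\mu=\frac{1+\lambda}{1-\lambda}$ into $\mu^{p}=-\mu$ collapses to the single condition $\lambda^{p+1}=1$ (with $\lambda\notin\mathbb{F}_{p}$). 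Thus I would first record the clean lemma: $B(p,m)$ equals the number of conjugate pairs $\{\lambda,\lambda^{p}\}=\{\lambda,1/\lambda\}$ of roots of $W_{m}(X)\bmod p$ lying on the ``circle'' $\lambda^{p+1}=1$ of $\mathbb{F}_{p^{2}}^{\times}$, equivalently the supersingular Legendre curves $E_{\lambda}$ whose $\lambda$ is fixed by the composite of Frobenius with the involution $\lambda\mapsto 1/\lambda$.

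Next I would interpret these norm-one $\lambda$ arithmetically. Reducing the parameters of ${}_{2}F_{1}(-m,-m;1;X)$ modulo $p$ at $m=[p/4]$ (where $-m\equiv \tfrac14$ or $\tfrac34$) and at $m=[p/3]$ (where $-m\equiv\tfrac13$ or $\tfrac23$), in the spirit of Theorem \ref{thm:DEU}, identifies the roots of $W_{[p/4]}$ and $W_{[p/3]}$ with supersingular points carrying the $2$-level and $3$-level structure built into the Legendre model. The extra condition $\lambda^{p+1}=1$ forces the Frobenius of $E_{\lambda}$ to satisfy $\pi^{2}=-p$, so the pair $\{\lambda,1/\lambda\}$ acquires an optimal embedding of an order of discriminant tied to $-2p$ (for $m=[p/4]$) or $-3p$ (for $m=[p/3]$) into the quaternion endomorphism algebra. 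By the Deuring/Eichler theory of optimal embeddings these pairs are counted by $h(\sqrt{-2p})$ and $h(\sqrt{-3p})$ respectively, and the weighting in front of each class number — the factor $\tfrac14$ in one case and $a_{p}$ in the other — is precisely the number of local embeddings at $2$ (resp.\ $3$) and at $p$, which is where the dependence on $\left(\tfrac{-1}{p}\right)$ and $\left(\tfrac{-2}{p}\right)$ enters $a_{p}$.

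Then I would isolate the correction constants $-2(\varepsilon+\nu)$ and $-4\delta$. These come from the exceptional $\lambda$ lying over $j=1728$, namely $\lambda\in\{-1,2,\tfrac12\}$, and over $j=0$, namely the primitive sixth roots of unity: at these points the generic six-to-one map $\lambda\mapsto j$ degenerates because $E_{\lambda}$ has extra automorphisms, so the associated factor of $P_{m}$ is either linear (its $\lambda$ lies in $\mathbb{F}_{p}$, already accounted for by Theorem \ref{thm:numoflinw}) or is the degenerate factor coming from $\lambda=-1\mapsto\mu=0$, neither of which is an irreducible $X^{2}+C$. I would determine exactly which of these special $\lambda$ are supersingular and how Frobenius acts on them; the answer is controlled by $\left(\tfrac{-1}{p}\right),\left(\tfrac{-2}{p}\right)$ (hence $\varepsilon,\nu$) for $m=[p/4]$ and by $\left(\tfrac{-3}{p}\right)$ (hence $\delta$) for $m=[p/3]$, and subtracting their contributions yields the stated constants.

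The main obstacle, and the step I expect to absorb most of the work, is the bookkeeping in the second paragraph: proving rigorously that the norm-one supersingular $\lambda$ with the prescribed level structure are in a \emph{multiplicity-preserving} bijection with the optimal-embedding (equivalently ideal) classes for the orders attached to $-2p$ and $-3p$, and that the local counts at $2,3,p$ reproduce exactly the coefficients $\tfrac14$ and $a_{p}$ rather than merely the leading class-number term. This is delicate because it simultaneously requires control of the parameter reduction of $W_{m}$ at $m=[p/4],[p/3]$ and of how the involution $\lambda\mapsto 1/\lambda$ acts on each Galois orbit of supersingular $\lambda$. To keep the totals honest I would cross-check the count of irreducible $X^{2}+C$ factors against the linear-factor count of Theorem \ref{thm:numoflinw} and against the known degree of the full supersingular Legendre polynomial, using the agreement of these totals as the decisive consistency test.
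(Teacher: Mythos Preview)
The paper does not contain a proof of this theorem: it is quoted as a result of Morton, with citations to \cite{morton2010legendre, morton2011cubic}, and is used as a black box in the proof of Theorem~\ref{thm:NofLF}. So there is no ``paper's own proof'' to compare your proposal against.

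That said, your outline is a coherent sketch and is in the spirit of Morton's actual arguments. The translation lemma you state is correct: under $\mu=\frac{1+\lambda}{1-\lambda}$, an irreducible factor $X^{2}+C$ of $P_{m}$ corresponds to a conjugate pair $\{\lambda,\lambda^{p}\}=\{\lambda,1/\lambda\}$ of roots of $W_{m}$ with $\lambda^{p+1}=1$ and $\lambda\notin\mathbb{F}_{p}$. Morton likewise passes from such special factors to supersingular Legendre (resp.\ Deuring normal form) curves whose Frobenius is constrained, and then counts via CM theory and class numbers; the correction terms do indeed arise from the exceptional $\lambda$ lying over $j=0,1728$. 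Where your sketch is honest about the difficulty is exactly where the work lies in Morton's papers: establishing the multiplicity-preserving bijection with optimal-embedding classes and extracting the precise local factors that produce $\tfrac14$ and $a_{p}$. One small caution: your heuristic ``$\pi^{2}=-p$'' is not by itself what pins down the discriminants $-2p$ and $-3p$; the level-$2$ and level-$3$ structure carried by $W_{[p/4]}$ and $W_{[p/3]}$ (and the corresponding isogenies) is essential to get the right imaginary quadratic order, and you should make that dependence explicit rather than folding it into the phrase ``with the prescribed level structure.''
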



\subsection{Supersingular polynomials for $\Gamma _{0}(N) \; (N=2,3) $}
Tsutsumi introduced the supersingular polynomials for congruence subgroups of low levels in \cite{tsutsumi2007atkin} and obtained hypergeometric representations for them.
There exist algebraic relations between the elliptic modular invariant $j(\tau )$ and modular functions $j_{N}(\tau )$ for $\Gamma_{0}(N)\; (N=2,3)$:
\begin{equation}
\begin{split}
( j_{2}(\tau ) +192 )^{3} - j(\tau )\, (j_{2}(\tau ) -64)^{2} =0, \\
j_{3}(\tau )\, ( j_{3}(\tau ) +216 )^{3} - j(\tau )\, (j_{3}(\tau ) -27)^{3} =0. 
\end{split} \label{eq:algreljjn}
\end{equation}
We prepare the set
\begin{align*}
S_{N} := \{ j_{N} \in \overline{\mathbb{F}}_{p} \,|\,  \text{the $j$-invariant determined by (\ref{eq:algreljjn}) is supersingular}  \}.
\end{align*}
We then define
\[ ss_{p}^{(N)}(X) = \prod_{ j_{N} \in S_{N}} (X-j_{N}) \in \overline{\mathbb{F}}_{p}[X] \]
for the prime $p \ge 5$. Note that we ignore the duplication of elements of the set $S_{N}$.
Because the set $S_{N}$ is stable under the conjugation over $\mathbb{F}_{p}$, we have $ss^{(N)}_{p}(X) \in \mathbb{F}_{p}[X]$.
The following proposition are stated in \cite[Proof of Theorem 4]{tsutsumi2007atkin} as a congruence relations of $ss^{(N)}_{p}(X)$ and 
certain polynomial like $U^{\varepsilon}_{n}(X)$ appearing in the proof of  Theorem~\ref{thm:DEU}.
\begin{prop}[Tsutsumi]
\begin{enumerate}
\item Let $p \geq 5$ be a prime and $m=[p/4]$. Then
\begin{equation}
ss^{(2)}_{p} (X) = X^{m+\varepsilon} {}_{2}F_{1}\left( -m ,\frac{3}{4}-\frac{\varepsilon}{2};1;\frac{64}{X} \right) \pmod{p} . \label{eq:hgrepof2ssp}
\end{equation}
\item Let $p \geq 5$ be a prime and $m=[p/3]$. Then
\begin{equation}
ss^{(3)}_{p} (X) = X^{m+\delta} {}_{2}F_{1}\left( -m ,\frac{2}{3}-\frac{\delta}{3};1;\frac{27}{X} \right) \pmod{p} .\label{eq:hgrepof3ssp}
\end{equation}
\end{enumerate}
\end{prop}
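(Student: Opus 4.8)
The plan is to follow the strategy of the proof of Theorem~\ref{thm:DEU}, with the level-one data replaced by that of $\Gamma_0(N)$. The link between the two levels is the covering $X_0(N)\to X(1)$ recorded in the algebraic relations (\ref{eq:algreljjn}): solving them expresses $j$ as a rational function of the Hauptmodul $j_N$ (namely $j=(j_2+192)^3/(j_2-64)^2$ for $N=2$ and $j=j_3(j_3+216)^3/(j_3-27)^3$ for $N=3$), so that $S_N$ consists exactly of the $j_N\in\overline{\mathbb{F}}_p$ whose image is a root of $ss_p$. Pulling back the Picard--Fuchs equation of the universal elliptic curve along this map, one obtains, in the variable $t=c_N/X$ with $c_2=64$, $c_3=27$, a Gauss hypergeometric equation whose three regular singular points $t=0,1,\infty$ are the two cusps and the unique elliptic point of $X_0(N)$. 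Reading off the local exponent differences---$0$ at each cusp and $1/2$ for $N=2$ (the order-two point lying over $j=1728$, at $j_2=0$) or $1/3$ for $N=3$ (the order-three point over $j=0$, at $j_3=0$)---pins the generic parameter pair to $\{1/4,3/4\}$ for $N=2$ and $\{1/3,2/3\}$ for $N=3$.

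With this in hand I would define monic polynomials in the spirit of the $U_n^{\varepsilon}(X)$ of Theorem~\ref{thm:DEU}, as the polynomial parts of $X^n\,{}_2F_1(a_N,b_N;1;c_N/X)$ for the generic pair $(a_N,b_N)$, and prove the $\Gamma_0(N)$-analogue of \cite[Proposition 5]{kaneko1998supersingular}: the supersingular polynomial $ss_p^{(N)}(X)$, written in $j_N$, is the reduction modulo $p$ of the weight $p-1$ Hasse invariant on $\Gamma_0(N)$, whose divisor is precisely $S_N$. Since the modular forms on $\Gamma_0(N)$ are governed by the hypergeometric equation above in the variable $c_N/j_N$, this weight $p-1$ form is a hypergeometric expression in which, because $p-1$ forces the relevant parameter to a negative integer $-m$ (with $m=[p/4]$ for $N=2$ and $m=[p/3]$ for $N=3$), the series truncates to a polynomial of the asserted degree.

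It then remains to reduce the parameters modulo $p$ exactly as in the proof of Theorem~\ref{thm:DEU}. From $p-1=4m+2\varepsilon$ for $N=2$ one gets $-m\equiv\tfrac14$ or $\tfrac34\pmod p$ according as $\varepsilon=0$ or $1$, so that $(-m,\tfrac34-\tfrac{\varepsilon}{2})$ reduces to the unordered pair $\{1/4,3/4\}$; likewise $p-1=3m+\delta$ for $N=3$ gives $(-m,\tfrac23-\tfrac{\delta}{3})\equiv\{1/3,2/3\}$. Invoking the symmetry ${}_2F_1(a,b;c;x)={}_2F_1(b,a;c;x)$ matches these with the generic polynomials, and the prefactor $X^{m+\varepsilon}$ (resp.\ $X^{m+\delta}$) records that $X=0$ is a root of multiplicity $\varepsilon$ (resp.\ $\delta$), i.e.\ that the image point $j=1728$ (resp.\ $j=0$) is supersingular exactly when $p\equiv 3\pmod 4$ (resp.\ $p\equiv 2\pmod 3$).

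The main obstacle is the central congruence of the second paragraph. One must show that the weight $p-1$ form cutting out $S_N$, once expressed in $j_N$ and reduced modulo $p$, is genuinely this truncated hypergeometric polynomial with the correct prefactor exponent. This requires tracking the local behaviour of the Hasse invariant at the two cusps and at the elliptic point of $X_0(N)$---in effect, accounting for how the $(p-1)$-st power is apportioned among these special points as a function of $p\bmod 4$ (for $N=2$) and $p\bmod 3$ (for $N=3$), which is what $\varepsilon$ and $\delta$ encode.
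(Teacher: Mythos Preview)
Your proposal is correct and follows precisely the route the paper indicates: the proposition is not reproved in the paper but is attributed to Tsutsumi, whose argument is described as establishing ``congruence relations of $ss^{(N)}_{p}(X)$ and certain polynomial like $U^{\varepsilon}_{n}(X)$ appearing in the proof of Theorem~\ref{thm:DEU}.'' That is exactly your plan---define the $\Gamma_0(N)$ analogues of the $U_n^{\varepsilon}$ as polynomial parts of $X^n\,{}_2F_1(a_N,b_N;1;c_N/X)$ for the generic pair $(a_N,b_N)=(1/4,3/4)$ or $(1/3,2/3)$, prove the level-$N$ version of \cite[Proposition~5]{kaneko1998supersingular}, and then reduce the parameters modulo $p$ using $p-1=4m+2\varepsilon$ (resp.\ $p-1=3m+\delta$) together with the symmetry of ${}_2F_1$---so there is nothing to add.
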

The degrees of these polynomials are given as follows.
\[ \deg ss^{(2)}_{p} (X) =  \frac{p-1}{4} +\frac{1}{2} \varepsilon ,\quad \deg ss^{(3)}_{p} (X) =  \frac{p-1}{3} +\frac{2}{3} \delta . \]


\begin{prop}
If $p\geq 5$, the number $L^{(N)}(p)\, (N=2,3)$ of characteristic $p$ supersingular $j_{N}$-invariants that lie in $\mathbb{F}_{p}$ is 
\[   L^{(2)}(p) = N_{1} \left( p, \left[ \frac{p}{4} \right] \right) + \varepsilon ,
\quad   L^{(3)}(p) =  N_{1} \left( p, \left[ \frac{p}{3} \right] \right)  + \delta . \]
Therefore, by Theorems \ref{thm:NofLFofssp} and \ref{thm:numoflinw}, the numbers $L^{(N)}(p)\, (N=2,3)$ are constant multiples of the class number $h(\sqrt{-p})$.
\end{prop}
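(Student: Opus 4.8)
The plan is to pass directly from Tsutsumi's hypergeometric representations (\ref{eq:hgrepof2ssp}) and (\ref{eq:hgrepof3ssp}) to the polynomial $W_m(X)$, and then to invoke Brillhart--Morton's count $N_1(p,m)$ from Theorem \ref{thm:numoflinw}. Write $c=\varepsilon$, $c_N=64$, $m=[p/4]$ in the level $2$ case, and $c=\delta$, $c_N=27$, $m=[p/3]$ in the level $3$ case. The first step is a reduction of the second hypergeometric parameter modulo $p$: I claim $\tfrac34-\tfrac{\varepsilon}{2}\equiv m+1 \pmod p$ when $N=2$, and $\tfrac23-\tfrac{\delta}{3}\equiv m+1 \pmod p$ when $N=3$. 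Each is a one-line check split according to $p\bmod 4$ (resp. $p\bmod 3$), using $4m\equiv -1$ or $-3$ (resp. $3m\equiv -1$ or $-2$). Since the Pochhammer factors $m+1,m+2,\dots,m+r$ with $r\le m$ never reach $p$ (because $2m<p$), this congruence of parameters upgrades to a congruence of truncated series ${}_2F_1(-m,\tfrac34-\tfrac{\varepsilon}{2};1;Y)\equiv {}_2F_1(-m,m+1;1;Y)\pmod p$, and likewise in level $3$.

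The second step applies the Pfaff transformation ${}_2F_1(-m,m+1;1;Y)=(1-Y)^m\,W_m\!\left(\frac{Y}{Y-1}\right)$, a polynomial identity which is just the companion, via (\ref{eq:wandl}), of the Legendre normalization ${}_2F_1(-m,m+1;1;\frac{1-x}{2})=P_m(x)$. Substituting $Y=c_N/X$ and simplifying turns the Tsutsumi formula into the clean congruence $ss_p^{(N)}(X)\equiv X^{c}\,(X-c_N)^m\,W_m\!\left(\frac{c_N}{c_N-X}\right)\pmod p$. I would then record that $F(X):=(X-c_N)^m\,W_m\!\left(\frac{c_N}{c_N-X}\right)$ is an honest polynomial of degree $m$ with $F(0)\ne 0$ (because $W_m(1)\ne 0$) and $F(c_N)\ne 0$ (because $W_m$ is monic of degree $m$), so that the factorization $ss_p^{(N)}=X^{c}F(X)$ exhibits $X=0$ as a root precisely when $c=1$.

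The third step is the root count. The substitution $X\mapsto U=\frac{c_N}{c_N-X}$ is a M\"obius transformation defined over $\mathbb{F}_p$, hence a bijection of $\mathbb{P}^1(\mathbb{F}_p)$; it identifies the nonzero $\mathbb{F}_p$-roots of $ss_p^{(N)}$ (equivalently the $\mathbb{F}_p$-roots of $F$) with the $\mathbb{F}_p$-roots of $W_m$, matching them without multiplicity. One has only to dispose of the three special points of the map: $X=c_N$ (where $U=\infty$) is excluded since $F(c_N)\ne 0$, while $U=0$ (where $X=\infty$) and $U=1$ (where $X=0$) are excluded since $W_m(0)=1$ and $W_m(1)=\binom{2m}{m}\not\equiv 0\pmod p$, the latter because $2m<p$ forces the binomial coefficient to be a unit. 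Consequently the number of distinct $\mathbb{F}_p$-roots of $ss_p^{(N)}$ equals $c$ plus the number of $\mathbb{F}_p$-roots of $W_m$, giving $L^{(2)}(p)=N_1(p,[p/4])+\varepsilon$ and $L^{(3)}(p)=N_1(p,[p/3])+\delta$. The closing assertion that $L^{(N)}(p)$ is a constant multiple of $h(\sqrt{-p})$ is then immediate upon substituting the evaluations of Theorems \ref{thm:NofLFofssp} and \ref{thm:numoflinw}.

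I expect the main obstacle to be the bookkeeping in the third step rather than any deep input: one must ensure the M\"obius dictionary loses no root and creates no spurious one, which reduces exactly to the non-vanishing facts $F(c_N)\ne 0$, $W_m(0)\ne 0$, and $W_m(1)\ne 0\pmod p$, all guaranteed by $2m<p$. A secondary point needing care is that ``number of linear factors'' must be read as ``number of distinct roots in $\mathbb{F}_p$''; since each M\"obius step and the prefactor $X^{c}$ preserve this notion, the two sides are genuinely equal.
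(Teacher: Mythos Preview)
Your proof is correct and follows essentially the same route as the paper: reduce the second hypergeometric parameter modulo $p$ to $m+1$, apply the Pfaff transformation to rewrite $ss_p^{(N)}(X)$ as $X^{c}(X-c_N)^{m}W_m\!\bigl(c_N/(c_N-X)\bigr)$, and conclude by the M\"obius-invariance of the linear-factor count. Your treatment is in fact more careful than the paper's, which simply asserts that the number of linear factors is unchanged under a linear fractional change of variable, whereas you explicitly verify the non-vanishing at the exceptional points $X=c_N$, $U=0$, $U=1$ using $2m<p$.
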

\begin{proof}
We only prove the first equality, the other cases being similar.
For $m=[p/4]$,
\[ W_{m}(X) = {}_{2}F_{1} (-m, -m;1;X) = (1-X)^{m} {}_{2}F_{1} \left( -m, m+1;1; \frac{X}{X-1} \right) \]
and since $\frac{3}{4}-\frac{\varepsilon }{2} \equiv m+1  \pmod{p}$,
\begin{equation}
\begin{split}
ss^{(2)}_{p} (Y) &= Y^{m+\varepsilon} {}_{2}F_{1}\left( -m ,\frac{3}{4}-\frac{\varepsilon}{2};1;\frac{64}{Y} \right) \\
                              &\equiv Y^{\varepsilon} (Y-64)^{m} W_{m}\left( \frac{64}{64-Y} \right) \pmod{p}. 
\end{split}\label{eq:ssp2andw}
\end{equation}
Because the number of linear factors is unchanged under the linear fractional transformation of variable, we have $ L^{(2)}(p) = \varepsilon + N_{1} (p, [p/4])$ by Theorem~\ref{thm:numoflinw}.
\end{proof}
We can regard the polynomial $ss_{p}^{(2)}(X) \pmod{p} $ as the polynomial part of 
 \[ X^{[p/4]+\varepsilon } {}_{2}F_{1}\left( \frac{1}{4} ,\frac{3}{4};1;\frac{64}{X} \right), \]
and hence easy calculation of binomial coefficients gives the following explicit formula of $ss_{p}^{(2)}(X)$.
The case of $ss_{p}^{(3)}(X)$ can be shown similarly.


\begin{thm} For a prime $p\geq 5$,
\begin{align*}
ss_{p}^{(2)}(X) &= \sum_{n=0}^{(p-1 + 2\varepsilon )/4 } \binom{2 n}{n} \binom{4 n}{2 n} X^{(p-1 + 2\varepsilon )/4 -n} \pmod{p}, \\
ss_{p}^{(3)}(X) &= \sum_{n=0}^{(p-1 +2\delta )/3} \binom{2 n}{n} \binom{3 n}{n} X^{(p-1 +2\delta )/3 -n} \pmod{p}.
\end{align*}
\end{thm}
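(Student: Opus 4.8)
The plan is to start from Tsutsumi's hypergeometric representation (\ref{eq:hgrepof2ssp}), which already presents $ss^{(2)}_p(X)$ as an honest polynomial, and to convert it termwise into the claimed binomial sum; the only non-formal ingredient is a reduction of the hypergeometric parameters modulo $p$. First I would record the degree arithmetic: since $m=[p/4]$ and $\deg ss^{(2)}_p = \frac{p-1}{4}+\frac{\varepsilon}{2}=m+\varepsilon$, one has $4m=p-1-2\varepsilon$, whence $4(-m)\equiv 1+2\varepsilon\pmod p$ and therefore $-m\equiv \frac14+\frac{\varepsilon}{2}\pmod p$. Consequently the first two parameters satisfy, as a multiset,
\[ \left\{\,-m,\ \tfrac34-\tfrac{\varepsilon}{2}\,\right\}\equiv\left\{\,\tfrac14+\tfrac{\varepsilon}{2},\ \tfrac34-\tfrac{\varepsilon}{2}\,\right\}=\left\{\,\tfrac14,\ \tfrac34\,\right\}\pmod p \]
for both $\varepsilon=0$ and $\varepsilon=1$. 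Matching Pochhammer factors one at a time, $-m+k\equiv \frac14+\frac{\varepsilon}{2}+k$, and using that $(n!)^2$ is a unit modulo $p$ for $n<p$, this shows that the coefficient of $z^n$ in ${}_2F_1(-m,\frac34-\frac{\varepsilon}{2};1;z)$ is congruent modulo $p$ to the coefficient of $z^n$ in ${}_2F_1(\frac14,\frac34;1;z)$ for every $0\le n\le m$.

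Next I would carry out the elementary binomial computation. Writing the products of odd integers in closed form gives the exact integer identity
\[ \frac{(1/4)_n\,(3/4)_n}{(n!)^2}\,64^{\,n}=\frac{(4n)!}{(2n)!\,(n!)^2}=\binom{2n}{n}\binom{4n}{2n}, \]
so that after multiplying by the prefactor $X^{m+\varepsilon}$ and substituting $z=64/X$ the $z^n$-term contributes exactly $\binom{2n}{n}\binom{4n}{2n}\,X^{m+\varepsilon-n}$. Combining this with the coefficient congruence of the previous paragraph yields
\[ ss^{(2)}_p(X)\equiv \sum_{n=0}^{m}\binom{2n}{n}\binom{4n}{2n}\,X^{m+\varepsilon-n}\pmod p. \]

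It remains only to reconcile the upper limit $m$ with the limit $m+\varepsilon=(p-1+2\varepsilon)/4$ asserted in the statement. When $\varepsilon=0$ the two sums coincide. When $\varepsilon=1$ the statement carries one extra (constant) term at $n=m+1$, and I would show it vanishes modulo $p$: here $4(m+1)=p+1$, so $\binom{4(m+1)}{2(m+1)}=\binom{p+1}{(p+1)/2}\equiv 0\pmod p$ by Lucas' theorem, since $p+1$ has base-$p$ digits $(1,1)$ while the units digit $(p+1)/2$ of the lower index exceeds $1$. Appending this term therefore changes nothing modulo $p$, consistent with the fact that the right-hand side of (\ref{eq:hgrepof2ssp}) has no constant term, and the first formula follows. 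The case of $ss^{(3)}_p(X)$ is entirely analogous: one reduces $\{-m,\frac23-\frac{\delta}{3}\}\equiv\{\frac13,\frac23\}\pmod p$ with $m=[p/3]$, uses the exact identity $\frac{(1/3)_n(2/3)_n}{(n!)^2}\,27^{\,n}=\frac{(3n)!}{(n!)^3}=\binom{2n}{n}\binom{3n}{n}$, and disposes of the top term when $\delta=1$ via $\binom{3(m+1)}{m+1}=\binom{p+1}{(p+1)/3}\equiv 0\pmod p$. The main point requiring care is not the binomial algebra, which is routine, but the parameter reduction modulo $p$ together with the bookkeeping of the truncation: one must verify that the hypergeometric polynomial and the full series ${}_2F_1(\frac14,\frac34;1;z)$ agree modulo $p$ across the whole range $0\le n\le m+\varepsilon$, the extra term being the sole place where a genuine but harmless vanishing occurs.
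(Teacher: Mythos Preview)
Your proof is correct and follows the same approach the paper sketches: the paper simply remarks that $ss_p^{(2)}(X)\pmod p$ is the polynomial part of $X^{[p/4]+\varepsilon}\,{}_2F_1(\tfrac14,\tfrac34;1;64/X)$ and that an ``easy calculation of binomial coefficients'' gives the result, and your argument is precisely the explicit execution of this---the parameter reduction $\{-m,\tfrac34-\tfrac{\varepsilon}{2}\}\equiv\{\tfrac14,\tfrac34\}$, the identity $\frac{(1/4)_n(3/4)_n}{(n!)^2}64^n=\binom{2n}{n}\binom{4n}{2n}$, and the Lucas-theorem check that the extra top term vanishes when $\varepsilon=1$ (and similarly for level~$3$).
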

Similarly, we obtain the following theorem concerning the square of supersingular polynomials by applying Clausen's formula
\begin{equation}
{}_{2}F_{1} ( \alpha ,\beta ;\alpha +\beta +1/2 ;x)^{2} = {}_{3}F_{2} (2\alpha ,2\beta ,\alpha +\beta ; 2\alpha +2\beta , \alpha +\beta +1/2 ;x). \label{eq:clausensformula}
\end{equation}
\begin{thm}\label{thm:sqofssp123}
For a prime $p\geq 5$,
\begin{align*}
ss_{p}(X)^{2} &= (X -1728)^{\varepsilon } \sum_{n=0}^{(p-1+ 8\delta )/6} \binom{2 n}{n} \binom{3 n}{n} \binom{6 n}{3 n} X^{(p-1+ 8\delta )/6 -n}
\pmod{p}, \\
ss_{p}^{(2*)}(X)^{2} &= (X-256)^{\nu } \sum_{n=0}^{(p-1+ 6\varepsilon )/4 } \binom{2 n}{n}^{2} \binom{4 n}{2 n} X^{(p-1+ 6\varepsilon )/4 -n}
\pmod{p}, \\
ss_{p}^{(3*)}(X)^{2} &= X^{\delta } (X-108)^{\delta } \sum_{n=0}^{(p-1 +2\delta )/3} \binom{2 n}{n}^{2} \binom{3 n}{n} X^{(p-1 +2\delta )/3 -n}
\pmod{p}.
\end{align*}
\end{thm}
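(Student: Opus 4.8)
The plan is to treat the three identities by a single uniform mechanism, the only data that change being the base hypergeometric parameters $(\alpha_{0},\beta_{0})$, the constant $c$, and the binomial product. For level $1$ I take $(\alpha_{0},\beta_{0})=(\tfrac{1}{12},\tfrac{5}{12})$ and $c=1728$; for level $2*$, $(\alpha_{0},\beta_{0})=(\tfrac18,\tfrac38)$ and $c=256$; for level $3*$, $(\alpha_{0},\beta_{0})=(\tfrac16,\tfrac13)$ and $c=108$. In every case $\alpha_{0}+\beta_{0}=\tfrac12$, so Clausen's formula (\ref{eq:clausensformula}) with $\gamma=\alpha_{0}+\beta_{0}+\tfrac12=1$ applies and gives ${}_{2}F_{1}(\alpha_{0},\beta_{0};1;y)^{2}={}_{3}F_{2}(2\alpha_{0},2\beta_{0},\tfrac12;1,1;y)$. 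First I would record, via the Gauss multiplication formula $(a)_{kn}=k^{kn}\prod_{j=0}^{k-1}(\tfrac{a+j}{k})_{n}$, that the $n$-th coefficient $\tfrac{(2\alpha_{0})_{n}(2\beta_{0})_{n}(1/2)_{n}}{(n!)^{3}}$ of this ${}_{3}F_{2}$ equals $c^{-n}$ times the relevant binomial product $\tfrac{(6n)!}{(n!)^{3}(3n)!}$, $\tfrac{(4n)!}{(n!)^{4}}$, resp. $\tfrac{(2n)!(3n)!}{(n!)^{5}}$. Hence $X^{D}\,{}_{3}F_{2}(2\alpha_{0},2\beta_{0},\tfrac12;1,1;c/X)$ reproduces exactly the sums written in the theorem.

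Next I would connect the supersingular polynomial to the base series modulo $p$. By Theorem~\ref{thm:DEU} and Definition~\ref{def:sakaissp}, each polynomial is, as an exact identity over $\mathbb{F}_{p}$, of the shape $X^{a}(X-c)^{b}\,{}_{2}F_{1}(-m',\beta;1;c/X)$ with $-m'$ a negative integer, and modulo $p$ the reduced pair $(-m'\bmod p,\ \beta\bmod p)$ equals $(\alpha_{0},\beta_{0})$ up to order in half the residue classes and $(1-\alpha_{0},1-\beta_{0})$ up to order in the other half. Since the matching full series has its reduced first Pochhammer parameter $\equiv-m'\pmod p$, that Pochhammer vanishes for $m'<n<p$, so the truncated polynomial ${}_{2}F_{1}(-m',\beta;1;y)$ agrees modulo $(p,y^{p})$ with the full series. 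In the complementary classes I then apply Euler's transformation
\[ {}_{2}F_{1}(1-\alpha_{0},1-\beta_{0};1;y)=(1-y)^{-1/2}\,{}_{2}F_{1}(\alpha_{0},\beta_{0};1;y), \]
a valid power-series congruence modulo $p$ because its coefficients involve only powers of $2$ and factorials $k!$ with $k<p$ in their denominators. Squaring, applying Clausen, and substituting $y=c/X$ then yields, modulo $p$,
\[ ss(X)^{2}=X^{2a}(X-c)^{2b}\,(1-c/X)^{-\kappa}\,{}_{3}F_{2}\!\left(2\alpha_{0},2\beta_{0},\tfrac12;1,1;\tfrac{c}{X}\right),\qquad \kappa\in\{0,1\}, \]
with $\kappa=1$ precisely in the Euler (complementary) classes.

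Finally I would use $(1-c/X)^{-1}=X/(X-c)$ to cancel the factor $(X-c)^{-\kappa}$ against $(X-c)^{2b}$ and to absorb the surplus power of $X$, leaving exactly the prefactor $(X-1728)^{\varepsilon}$, $(X-256)^{\nu}$, resp.\ $X^{\delta}(X-108)^{\delta}$ asserted in each case, and then read off the polynomial part. Because $\deg ss(X)^{2}$ is at most $\tfrac{p-1}{6}$, $\tfrac{p-1}{4}$, $\tfrac{p-1}{3}$ up to bounded corrections, it is smaller than $p$, so working modulo $y^{p}$ loses nothing, the stated upper summation index matches the degree of the polynomial part, and any top terms overshooting the true truncation of the ${}_{3}F_{2}$ vanish modulo $p$ and do no harm. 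The main obstacle is the bookkeeping in the complementary classes: one must verify uniformly over the relevant residues of $p$ that the half-integer exponent from Euler's transformation, once squared to $(1-c/X)^{-1}$, combines with the explicit factors $X^{a}$ and $(X-c)^{b}$ to reproduce precisely the prefactors and summation ranges of the statement, while checking that no denominator ever introduces a factor of $p$.
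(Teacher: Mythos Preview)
Your approach is correct and is precisely the route the paper indicates: the paper's proof consists only of the remark ``apply Clausen's formula \eqref{eq:clausensformula}'', together with the surrounding discussion where the identity \eqref{eq:kummer} (your Euler transformation) is used to pass between the base parameters $(\alpha_{0},\beta_{0})$ and the complementary ones $(1-\alpha_{0},1-\beta_{0})$. You have simply fleshed out the bookkeeping---the binomial evaluations via Gauss multiplication, the case split on the residue of $p$, and the matching of exponents with the summation bounds---that the paper leaves implicit.
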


Here, we would like to point out the similarity between Theorem  \ref{thm:sqofssp123} and the expansions of certain Eisenstein series in terms of $1/j(\tau )$ and
similar local parameters described below. By Theorem \ref{thm:DEU}, we have
\begin{align*}
&{} ss_{p}(X) = X^{m+\delta}(X-1728)^{\varepsilon} {}_{2}F_{1}\left( -m ,\frac{5}{12}-\frac{2\delta-3\varepsilon}{6};1;\frac{1728}{X} \right) \pmod{p} \\
        &\equiv \begin{cases}
              X^{m+\delta} {}_{2}F_{1}\left( \dfrac{1}{12} ,\dfrac{5}{12} ;1;\dfrac{1728}{X} \right) \pmod{p} & \text{if $p \equiv 1 \pmod{4}$,}\\
              X^{m+\delta}(X-1728) {}_{2}F_{1}\left( \dfrac{7}{12} ,\dfrac{11}{12};1;\dfrac{1728}{X} \right) \pmod{p} & \text{if $p \equiv 3 \pmod{4}$.}
              \end{cases}
\end{align*}
We note the following hypergeometric transformation
\begin{equation}
 {}_{2}F_{1}\left( \alpha , \beta ; \gamma  ; z \right) = (1-z)^{\gamma -\alpha -\beta } {}_{2}F_{1}\left( \gamma -\alpha ,\gamma -\beta ; \gamma ;z  \right) \label{eq:kummer}
\end{equation}
and hence have
\[ {}_{2}F_{1}\left( \dfrac{1}{12} ,\dfrac{5}{12} ;1;\dfrac{1728}{X} \right) = \left( 1-\frac{1728}{X} \right)^{1/2} {}_{2}F_{1}\left( \dfrac{7}{12} ,\dfrac{11}{12} ;1;\dfrac{1728}{X} \right). \]
It is well known that the Eisenstein series of weight 4 on $SL_{2}(\mathbb{Z})$ has the following hypergeometric representation for sufficiently large $\Im (\tau)$:
\[ E_{4}(\tau ) = {}_{2}F_{1}\left( \dfrac{1}{12} ,\dfrac{5}{12} ;1;\dfrac{1728}{j(\tau )} \right)^{4} \in M_{4}(SL_{2}(\mathbb{Z})) . \]

In \cite{sakai2011atkin}, it is shown that the Eisenstein series of weight 4 on the Fricke group $\Gamma_{0}^{*}(N) \;(N=2,3)$ has the following hypergeometric series expression:
\begin{align*}
(2 E_{2}(2\tau )- E_{2}(\tau ))^{2}  &= {}_{2}F_{1}\left( \frac{1}{8} ,\frac{3}{8};1;\frac{256}{j_{2}^{*}(\tau )} \right)^{4} \in M_{4}(\Gamma_{0}^{*}(2)) ,\\
\left( \frac{3 E_{2}(3\tau )- E_{2}(\tau )}{2} \right)^{2} &= {}_{2}F_{1}\left( \frac{1}{6} ,\frac{1}{3};1;\frac{108}{j_{3}^{*}(\tau )} \right)^{4} \in M_{4}(\Gamma_{0}^{*}(3)).
\end{align*}
The calculation of  the binomial coefficients using Clausen's formula (\ref{eq:clausensformula}) gives the following:
\begin{align*}
E_{4}(\tau )^{1/2} &= \sum_{n=0}^{\infty} \binom{2 n}{n} \binom{3 n}{n} \binom{6 n}{3 n} j(\tau )^{-n} , \\
2 E_{2}(2\tau )- E_{2}(\tau ) &= \sum_{n=0}^{\infty } \binom{2 n}{n}^{2} \binom{4 n}{2 n} j_{2}^{*}(\tau )^{-n} , \\
\frac{3 E_{2}(3\tau )- E_{2}(\tau )}{2} &= \sum_{n=0}^{\infty } \binom{2 n}{n}^{2} \binom{3 n}{n} j_{3}^{*}(\tau )^{-n} .
\end{align*}
These expressions are very similar to the polynomial of Theorem \ref{thm:sqofssp123}.
Based on these similarities, we propose conjectural expressions of squares of the supersingular polynomials $ss_{p}^{(5*)}(X)$ and  $ss_{p}^{(7*)}(X)$ in Section \ref{Conjectures and Observations}.


\section{Proof of main theorems}\label{Proof of main theorems}
In order to prove Theorems \ref{thm:NofLF} and \ref{thm:relofspgp}, we start with the next proposition on the algebraic relation between the polynomial $ss^{(N*)}_{p}(X)$ and $ss^{(N)}_{p}(X)$ for $N=2,3$.
\begin{prop}\label{prop:algrelofssp}
\begin{enumerate}
\item Let $p \geq 5$ be a prime and $m=[p/8]$. Then we have
\[ (X-64)^{m+\varepsilon +\nu} ss^{(2*)}_{p} \left( \frac{X^2}{X-64} \right) =  X^{\varepsilon } (X-128)^{\nu } ss^{(2)}_{p} (X) 
\pmod{p}. \]
\item Let $p \geq 5$ be a prime and $m=[p/6]$. Then we have
\[ (X-27)^{m+2\delta } ss^{(3*)}_{p} \left( \frac{X^2}{X-27} \right) = X^{\delta }(X-54)^{\delta } ss^{(3)}_{p} (X) \pmod{p}  . \]
\end{enumerate}
\end{prop}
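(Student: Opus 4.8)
The plan is to prove both identities directly from the hypergeometric descriptions of the four polynomials involved, turning the claimed identity, modulo $p$, into a classical quadratic transformation of Gauss series. I carry out $N=2$ in detail; the case $N=3$ is completely parallel, with $64,128,256$ replaced by $27,54,108$ and the parameter pair $(\tfrac18,\tfrac38)$ replaced by $(\tfrac16,\tfrac13)$.

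First I would record the two algebraic facts produced by the substitution $Y=X^{2}/(X-64)$. Setting $w=64/X$, a one-line computation gives
\[
\frac{256}{Y}=\frac{256(X-64)}{X^{2}}=4w(1-w),
\qquad
Y-256=\frac{X^{2}-256X+16384}{X-64}=\frac{(X-128)^{2}}{X-64}.
\]
The perfect-square factorization on the right is the structural key: it is exactly what forces a factor $(X-128)^{\nu}$ onto the right-hand side (for $N=3$ one finds $108/Y=4w(1-w)$ with $w=27/X$ and $Y-108=(X-54)^{2}/(X-27)$, producing $(X-54)^{\delta}$). Conceptually, $Y=X^{2}/(X-64)$ is the degree-two Atkin--Lehner quotient $X_{0}(2)\to X_{0}^{*}(2)$, whose two ramification values $Y=0$ and $Y=256$ pull back to the double points $X=0$ and $X=128$; this is the geometric reason the correction factor $X^{\varepsilon}(X-128)^{\nu}$ is precisely what it is.

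Next I would feed these into the hypergeometric formulas. By Definition~\ref{def:sakaissp} and the parameter reduction in the proof of Theorem~\ref{thm:DEU}, modulo $p$ the polynomial $ss^{(2*)}_{p}(Y)$ equals $Y^{m_{2}+\varepsilon}(Y-256)^{\nu}$ times the partial sum up to $(256/Y)^{m_{2}}$ of ${}_{2}F_{1}(\tfrac18,\tfrac38;1;256/Y)$, and by Tsutsumi's formula \eqref{eq:hgrepof2ssp} the polynomial $ss^{(2)}_{p}(X)$ arises the same way from ${}_{2}F_{1}(\tfrac14,\tfrac34;1;64/X)$. The quadratic transformation
\[
{}_{2}F_{1}\!\left(a,b;a+b+\tfrac12;4w(1-w)\right)={}_{2}F_{1}\!\left(2a,2b;a+b+\tfrac12;w\right),
\qquad (a,b)=\left(\tfrac18,\tfrac38\right),
\]
for which $a+b+\tfrac12=1$ and $(2a,2b)=(\tfrac14,\tfrac34)$, identifies the two full series under $Y=X^{2}/(X-64)$. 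Substituting, multiplying by $(X-64)^{m_{2}+\varepsilon+\nu}$ to clear denominators, and using the two displayed identities to simplify the prefactors turns the left-hand side of (i) into $X^{2\varepsilon}(X-128)^{2\nu}(X-64)^{m_{2}}\,Q\!\left(\tfrac{X^{2}}{X-64}\right)$, where $Q$ is the monic degree-$m_{2}$ polynomial determined by $ss^{(2*)}_{p}(Y)=Y^{\varepsilon}(Y-256)^{\nu}Q(Y)$. After cancelling the common factor $X^{\varepsilon}(X-128)^{\nu}$, identity (i) becomes the core claim $ss^{(2)}_{p}(X)\equiv X^{\varepsilon}(X-128)^{\nu}(X-64)^{m_{2}}\,Q\!\left(\tfrac{X^{2}}{X-64}\right)\pmod{p}$.

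The degree count showing both sides are monic of degree $[p/4]+\varepsilon$ rests on the arithmetic identity $[p/4]=2[p/8]+\nu$ (and $[p/3]=2[p/6]+\delta$ for $N=3$), easily checked on the residues of $p$ modulo $8$ (resp. modulo $6$). The crux is the comparison of the two truncations. When $\nu=0$ (i.e. $p\equiv1,3\pmod{8}$) the series ${}_{2}F_{1}(\tfrac18,\tfrac38;1;256/Y)$ already terminates modulo $p$ at order $m_{2}$, since $p\mid\prod_{k<n}(8k+1)(8k+3)$ as soon as $n>[p/8]$; then $Q\!\left(\tfrac{X^{2}}{X-64}\right)$ is literally the $w$-expansion of ${}_{2}F_{1}(\tfrac14,\tfrac34;1;64/X)$ and the identity drops out. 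The genuine obstacle is the case $\nu=1$ (i.e. $p\equiv5,7\pmod{8}$): there the $(\tfrac18,\tfrac38)$-series does \emph{not} terminate at order $m_{2}$ modulo $p$, its surviving higher-order coefficients spoil a naive term-by-term match, and one must prove that the extra factor $(X-128)$ exactly absorbs them. Concretely this reduces to the vanishing modulo $p$ of the tail coefficients $c_{n}=4^{n}\prod_{k<n}(4k+1)(4k+3)/(n!)^{2}$ of ${}_{2}F_{1}(\tfrac14,\tfrac34;1;64/X)$ for $n>[p/4]$, combined with the $4w(1-w)$ bookkeeping relating the two truncations. Establishing this compensation uniformly across the residue classes, together with its $(\tfrac16,\tfrac13)$/$\delta$ analogue using $\prod_{k<n}(3k+1)(3k+2)$ for $N=3$, is where the essential work lies; everything else is the routine substitution and transformation above.
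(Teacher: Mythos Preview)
Your strategy via the quadratic transformation is exactly the paper's, and your treatment of the case $\nu=0$ is essentially complete. The gap lies in $\nu=1$, and it is not merely a truncation issue: the premise that $ss_p^{(2*)}(Y)/\bigl(Y^{m_2+\varepsilon}(Y-256)^{\nu}\bigr)$ is always a partial sum of ${}_2F_1(\tfrac18,\tfrac38;1;256/Y)$ modulo $p$ is false when $\nu=1$. Running the parameter reduction of Definition~\ref{def:sakaissp} as in the proof of Theorem~\ref{thm:DEU}, the pair $\bigl(-m_2,\,\tfrac38-\tfrac{\varepsilon-2\nu}{4}\bigr)$ reduces mod $p$ to $\{\tfrac18,\tfrac38\}$ for $p\equiv1,3\pmod8$ but to $\{\tfrac58,\tfrac78\}$ for $p\equiv5,7\pmod8$. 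The two series already differ at the $u^1$ coefficient ($\tfrac{3}{64}$ versus $\tfrac{35}{64}$), so the ``tail compensation'' you propose cannot be carried out as stated; you are trying to match the wrong hypergeometric series.

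The paper closes the gap by applying the quadratic transformation on the $ss_p^{(2)}$ side, keeping the terminating integer parameter $-(2m_2+\nu)$ rather than passing to its mod-$p$ reduction. This yields ${}_2F_1\bigl(-m_2-\tfrac{\nu}{2},\,\tfrac38-\tfrac{\varepsilon}{4};\,1;\,4w(1-w)\bigr)$ with $w=64/X$. When $\nu=1$ the first parameter is the half-integer $-m_2-\tfrac12$, and a single application of Euler's transformation \eqref{eq:kummer} sends the pair to $\bigl(-m_2,\,\tfrac38-\tfrac{\varepsilon-2}{4}\bigr)$ modulo $p$ while producing the prefactor $(1-4w(1-w))^{1/2}=(1-2w)=(X-128)/X$. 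That factor is exactly the $(X-128)^\nu$ you were seeking; once it is in place the identification with $ss_p^{(2*)}\bigl(X^2/(X-64)\bigr)$ is immediate, with no separate tail analysis needed. The level-$3$ argument is identical, Euler supplying $(X-54)^\delta$ in the $\delta=1$ case.
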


\begin{proof}
We only prove the case of level 2, the other case being similar.
Now $m=[p/8]$ and so $p-1= 4(2m+\nu) + 2\varepsilon $, the polynomial (\ref{eq:hgrepof2ssp}) is
\begin{equation}
ss^{(2)}_{p} (X) = X^{2m+\varepsilon +\nu} {}_{2}F_{1}\left( -2m -\nu ,\frac{3}{4}-\frac{\varepsilon }{2};1;\frac{64}{X} \right) \pmod{p}. \label{eq:propss2p}
\end{equation}
For $\alpha = -m - \nu /2, \beta = 3/8 - \varepsilon /4$, we have $\alpha +\beta +1/2 \equiv 1 \pmod{p} $.
Based on this, we apply  the formula (Kummer's relation)
\[ {}_{2}F_{1}\left( 2\alpha , 2\beta ; \alpha +\beta +\frac{1}{2} ; z \right) = {}_{2}F_{1}\left( \alpha , \beta ; \alpha +\beta +\frac{1}{2} ; 4z(1-z) \right) \]
to the right-hand side of (\ref{eq:propss2p}) and get
\begin{equation}
ss^{(2)}_{p} (X) = X^{2m+\varepsilon +\nu} {}_{2}F_{1}\left( -m -\frac{\nu }{2} , \frac{3}{8}-\frac{\varepsilon }{4};1;  \frac{256(X-64)}{X^2} \right) 
\pmod{p} .  \label{eq:propss2p1}
\end{equation}
If $\nu =0$, there is nothing to do:
\begin{equation}
ss^{(2)}_{p} (X) = X^{2m+\varepsilon} {}_{2}F_{1}\left( -m , \frac{3}{8}-\frac{\varepsilon }{4} ; 1 ; \frac{256(X-64)}{X^2} \right)  \pmod{p}. \notag
\end{equation}
If $\nu =1$, we apply (\ref{eq:kummer}) to the left-hand side of (\ref{eq:propss2p1}). Then the first two parameters of the hypergeometric series of (\ref{eq:propss2p1}) reduce modulo $p$ to
\[ 1-\alpha \equiv m+\frac{3}{2} \equiv \frac{3}{8}-\frac{\varepsilon -2}{4} , \quad  1-\beta \equiv \frac{5}{8}+\frac{\varepsilon }{4} \equiv -m \pmod{p}.  \]
Since  the exponent is $1 -\alpha -\beta \equiv 1/2 \pmod{p}$,  
\[ \left( 1- \frac{256(X-64)}{X^2} \right)^{1/2} = \frac{X-128}{X}. \]
Therefore when $\nu=1$, we have
\begin{align*}
ss^{(2)}_{p} (X) = X^{2m+\varepsilon}(X-128)\; {}_{2}F_{1}\left( -m , \frac{5}{8}+\frac{\varepsilon }{4} ; 1 ; \frac{256(X-64)}{X^2} \right)  \pmod{p} .
\end{align*}
Summarizing these cases, we finally obtain
\begin{equation}
ss^{(2)}_{p} (X) = X^{2m+\varepsilon}(X-128)^{\nu }\; {}_{2}F_{1}\left( -m , \frac{3}{8}-\frac{\varepsilon -2}{4} ; 1 ; \frac{256(X-64)}{X^2} \right) \pmod{p} .
\end{equation}
By multiplying both sides of the above equation by $X^{\varepsilon}(X-128)^{\nu }$, we get the assertion:
\begin{align*}
&{} X^{\varepsilon}(X-128)^{\nu } ss^{(2)}_{p} (X) \\
&= (X-64)^{m+\varepsilon +\nu } \left( \frac{X^2}{X-64} \right)^{m+\varepsilon } \left( \frac{X^2}{X-64} -256 \right)^{\nu } \\
&{} \quad \times {}_{2}F_{1}\left( -m , \frac{3}{8}-\frac{\varepsilon -2\nu }{4} ; 1 ; \frac{256(X-64)}{X^2} \right)  \\
&{} =(X-64)^{m+\varepsilon +\nu } ss^{(2*)}_{p} \left( \frac{X^2}{X-64} \right) \pmod{p}.
\end{align*}
\end{proof}


\begin{lem}\label{lem:therootsofW4}
The roots of $W_{(p-1-2\varepsilon)/4}(X)$ over $\mathbb{F}_{p}$ are distinct and all lie in $\mathbb{F}_{p^2}$.
\end{lem}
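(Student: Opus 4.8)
The plan is to transport the assertion to the supersingular polynomial $ss^{(2)}_p(X)$ for $\Gamma_0(2)$, whose roots admit a geometric description, by means of the identity (\ref{eq:ssp2andw}). First I would record that $(p-1-2\varepsilon)/4 = [p/4] =: m$ in both residue classes $p\equiv 1,3 \pmod 4$, so that $W_{(p-1-2\varepsilon)/4} = W_m$ with $m=[p/4]$ is precisely the polynomial occurring in Tsutsumi's relation. By (\ref{eq:ssp2andw}),
\[ ss^{(2)}_p(Y) \equiv Y^{\varepsilon}(Y-64)^{m}\, W_m\!\left(\frac{64}{64-Y}\right) \pmod{p}, \]
so the substitution $\phi: Y \mapsto X = 64/(64-Y)$, a M\"obius transformation defined over $\mathbb{F}_p$ with inverse $X\mapsto Y=64(X-1)/X$, matches the roots of $W_m$ with the nonzero roots of $ss^{(2)}_p$. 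Since $W_m(0)=\binom{m}{0}^2=1\neq 0$ and $W_m(1)=\binom{2m}{m}\not\equiv 0 \pmod p$ (because $2m=2[p/4]<p$, so no factor of $p$ appears), no root of $W_m$ lies at $X=0$ (i.e.\ $Y=\infty$) or $X=1$ (i.e.\ $Y=0$); hence $\phi$ carries the full root set of $W_m$ bijectively onto the set of nonzero roots of $ss^{(2)}_p$, and does so without collisions because $\phi$ is injective on $\mathbb{P}^1(\overline{\mathbb{F}}_p)$.

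Because $\phi$ and $\phi^{-1}$ have coefficients in $\mathbb{F}_p$, they preserve the field of definition of points and are injective; therefore it suffices to prove that $ss^{(2)}_p(Y)$ is separable with all of its roots in $\mathbb{F}_{p^2}$. Separability is essentially built into the definition: $ss^{(2)}_p(X)=\prod_{j_2\in S_2}(X-j_2)$ is a product over the \emph{set} $S_2$, hence squarefree, and Tsutsumi's congruence, together with the matching of the (monic) degrees $\#S_2 = m+\varepsilon$, shows that the hypergeometric polynomial equals this squarefree polynomial already in $\mathbb{F}_p[X]$. The substantive point is thus to show that every $j_2\in S_2$ lies in $\mathbb{F}_{p^2}$.

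For the latter I would argue as follows. A root $j_2\in S_2$ is the Hauptmodul value of a supersingular point $(E,C)$ of $X_0(2)$, where $E/\overline{\mathbb{F}}_p$ is supersingular and $C\subset E$ is cyclic of order $2$. By Deuring's theory, $E$ admits a model $E_0$ over $\mathbb{F}_{p^2}$ whose $p^2$-power Frobenius endomorphism is the scalar $[\pm p]$ (the twist with $\#E_0(\mathbb{F}_{p^2}) = (p\mp 1)^2$). Since $p$ is odd, $[\pm p]$ acts as the identity on $E_0[2]$; equivalently, the geometric Frobenius $\mathrm{Frob}_{p^2}$ fixes $E_0[2]$ pointwise. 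Hence every order-$2$ subgroup of $E_0$, in particular the image of $C$, is $\mathbb{F}_{p^2}$-rational, so the pair $(E,C)$ is defined over $\mathbb{F}_{p^2}$ and $j_2\in\mathbb{F}_{p^2}$. Transporting back through $\phi$ then yields that the roots of $W_{(p-1-2\varepsilon)/4}=W_m$ are distinct and lie in $\mathbb{F}_{p^2}$.

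The main obstacle is the last step: upgrading Deuring's statement ``$j(E)\in\mathbb{F}_{p^2}$'' to the level-$2$ invariant $j_2\in\mathbb{F}_{p^2}$, since the algebraic relation (\ref{eq:algreljjn}) exhibits $j_2$ only as a root of a cubic over $\mathbb{F}_p(j)$ and hence, a priori, places $j_2$ in a cubic extension of $\mathbb{F}_{p^2}$. The Frobenius-as-scalar argument is exactly what removes this ambiguity. Alternatively, one can bypass $X_0(2)$ altogether by invoking the classical fact that the roots of the Hasse invariant $W_{(p-1)/2}$ (the supersingular Legendre parameters $\lambda$) are simple and lie in $\mathbb{F}_{p^2}$, together with the quadratic transformation ${}_2F_1(\tfrac12,\tfrac12;1;z)={}_2F_1(\tfrac14,\tfrac14;1;4z(1-z))$ relating $W_{(p-1)/2}$ to $W_{[p/4]}$; this variant, however, requires separate bookkeeping in the case $p\equiv 3\pmod 4$, where the two degrees no longer match.
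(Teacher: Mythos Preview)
Your argument is correct and takes a genuinely different route from the paper. The paper descends from $W_{(p-1)/2}$ via the quadratic transformation
\[
W_{(p-1)/2}(X)\equiv(1-2X)^{\varepsilon}\,W_{[p/4]}\bigl(4X(1-X)\bigr)\pmod{p}:
\]
it first establishes the symmetry $\lambda\mapsto 1-\lambda$ on the root set of $W_{(p-1)/2}$ from a hypergeometric identity, invokes Brillhart--Morton for the fact that those roots are simple and lie in $\mathbb{F}_{p^2}$, and then checks that the $2$-to-$1$ map $X\mapsto 4X(1-X)$ sends distinct pairs $\{\lambda,1-\lambda\}$ to distinct values, treating $p\equiv 1$ and $p\equiv 3\pmod 4$ separately. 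This is precisely the ``alternative'' you outline in your final paragraph.

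Your main argument instead passes \emph{up} to $ss_p^{(2)}$ through the M\"obius map of (\ref{eq:ssp2andw}), reads separability off from the set-theoretic definition of $ss_p^{(2)}$, and deduces $\mathbb{F}_{p^2}$-rationality from the existence of a supersingular model whose $p^2$-Frobenius is $[\pm p]$. The trade-off: the paper stays inside polynomial identities and a single cited lemma on $W_{(p-1)/2}$, at the cost of a case split; your route is uniform in $p\bmod 4$ and explains the phenomenon via moduli, but imports the Deuring/Waterhouse input on Frobenius. A bonus of your Frobenius step is that it shows, with no additional work, that every supersingular point on $X_0(N)$ is $\mathbb{F}_{p^2}$-rational for any $N$ prime to $p$---something the identity-based method would have to redo level by level.
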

\begin{proof}
Firstly, we prove that if $\lambda$ is a root of $W_{(p-1)/2}(X) $, then $1-\lambda$ is also a root of it.
The polynomial $W_{m}(X) = {}_{2}F_{1} (-m, -m;1;X)$ can be regarded as a polynomial solution of certain hypergeometric differential equation at $X=0$ (exponent zero), so this polynomial coincide with the polynomial solution of the same equation at $X=1$ (exponent zero) up to a constant multiple.
Since $\sum_{r=0}^{m}\binom{m}{r}^2 = \binom{2m}{m}$, we have
\[ W_{m}(X) = {}_{2}F_{1} (-m, -m;1;X) = \binom{2m}{m} {}_{2}F_{1} (-m, -m;-2m; 1-X). \]
It is easy to see that the congruences $\binom{p-1}{(p-1)/2} \equiv (-1)^{(p-1)/2} \not\equiv 0 \pmod{p}$ and $-2m\equiv 1 \pmod{p}$ hold when $m=(p-1)/2$, we have
\[ W_{(p-1)/2}(X) \equiv (-1)^{(p-1)/2}\; W_{(p-1)/2} (1-X) \pmod{p} \]
and the first assertion follows.

Secondly, by \cite[eq. (1.2)]{brillhart2004class}, we have
\begin{equation}
W_{(p-1)/2}(X) \equiv (1-2X)^{\varepsilon } W_{(p-1-2\varepsilon )/4}(4X(1-X)) \pmod{p}. \label{eq:algrelofw2w4}
\end{equation}
If $p\equiv1 \pmod{4}$, i.e. $\varepsilon =0$, the set of roots of  $W_{(p-1)/2}(X) $ is 
\[ \{ \lambda_{1}, \dots , \lambda_{(p-1)/4}, 1-\lambda_{1}, \dots , 1-\lambda_{(p-1)/4}\}. \]
Because $X=\lambda$ and $X=1-\lambda$ give the same value $4X(1-X)$, the set of roots of  $W_{(p-1)/4}(X) $ is
\begin{equation}
\{ 4\lambda_{1}(1-\lambda_{1}), \dots ,4 \lambda_{(p-1)/4}(1-\lambda_{(p-1)/4})\}. \label{setofrootsw4}
\end{equation}
By \cite[Appendix, Proposition 1]{brillhart2004class}, the roots of $W_{(p-1)/2}(X) $ over $\mathbb{F}_{p}$ are distinct and all lie in $\mathbb{F}_{p^2}$.
Combining this fact and the first assertion, we see that if $\alpha$ and $\beta$ with $\alpha \not\equiv \beta \pmod{p}$ are the roots of $W_{(p-1)/2}(X) $, then $\alpha \not\equiv 1-\beta \pmod{p}$ holds and so
$\alpha (1-\alpha) -\beta (1-\beta) = (\alpha -\beta)(\alpha +\beta -1) \not \equiv 0 \pmod{p}$.
Therefore the elements of the set (\ref{setofrootsw4})  are distinct and all lie in $\mathbb{F}_{p^2}$.

If $p\equiv3 \pmod{4}$,  i.e. $\varepsilon =1$, the set of roots of  $W_{(p-1)/2}(X) $ is
\[ \{ \lambda_{1}, \dots , \lambda_{(p-3)/4},1/2, 1-\lambda_{1}, \dots , 1-\lambda_{(p-3)/4}\}. \]
It can be seen that the set of roots of  $W_{(p-3)/4}(X) $ is
\[ \{ 4\lambda_{1}(1-\lambda_{1}), \dots ,4 \lambda_{(p-3)/4}(1-\lambda_{(p-3)/4})\} \]
as in the above case and these roots are distinct and all lie in $\mathbb{F}_{p^2}$.
\end{proof}


\begin{proof}[Proof of Theorem \ref{thm:NofLF}]
First, we prove the explicit formula of $L^{(2*)}(p)$.
By the algebraic relation (\ref{eq:wandl}) and Lemma \ref{lem:therootsofW4}, the Legendre polynomial $P_{2m+\nu }(X)$ factors into distinct linear and quadratic polynomials over $\mathbb{F}_{p}$ for the prime $p= 8m + 4\nu + 2\varepsilon +1 = 4(2m +\nu) + 2\varepsilon +1 \geq 5$.
From the definition, one can easily see that the parity of the Legendre polynomial is $P_{n}(-x) = (-1)^{n} P_{n}(x)$, and hence $X^{-\nu} P_{2m+\nu }(X)$ is an even polynomial.
Consequently, we can rewrite $P_{2m+\nu }(X)$ as follows:
\begin{equation}
\begin{split}
P_{2m+\nu }(X) &= e_{p}\, X^{\nu } \prod_{r=1}^{A} (X^2 - \ell_{r} ) \prod_{s=1}^{B} (X^2 - \alpha_{s} ) \\
&{} \quad \times \prod_{t} (X^2 + \beta_{t} X +\gamma_{t} ) (X^2 - \beta_{t} X +\gamma_{t} ) \pmod{p} ,
\end{split}\label{eq:factorizationofleg}
\end{equation}
where
\begin{align*}
e_{p} &= \frac{1}{2^{2m+\nu}}\binom{2(2m+\nu)}{2m+\nu} ,\;  \left( \frac{\ell_{r}}{p}\right) =+1 ,\; \left( \frac{\alpha _{s}}{p}\right) = -1 , \\
\beta_{t} &\not\equiv 0 \pmod{p}, \; \left( \frac{\beta _{t}^2 -4 \gamma _{t}}{p}\right) = -1.  
\end{align*}
According to Theorem \ref{thm:mortonbqf}, we have $B = B(p,[p/4]) = \tfrac{1}{4} \{ h(\sqrt{-2 p}) - 2 (\varepsilon + \nu ) \} $.
Using the expression (\ref{eq:ssp2andw}) and the algebraic relation (\ref{eq:wandl}), we have $ss_{p}^{(2)}(Y) = Y^{2m+\varepsilon +\nu } P_{2m+\nu }( 1- 128/Y ) \pmod{p}$.
Substituting $X=1-128/Y$ into (\ref{eq:factorizationofleg}), we have
\begin{equation}
\begin{split}
ss_{p}^{(2)}(Y) &= e_{p}\, Y^{\varepsilon } (Y-128)^{\nu } \prod_{r=1}^{A} ((Y-128)^2 - \ell_{r} Y^2 )  \prod_{s=1}^{B} ((Y-128)^2 - \alpha _{s} Y^2 )  \\
                            &{} \quad \times \prod_{t} ((Y-128)^2 + \beta_{t} Y(Y-128) +\gamma_{t}Y^2 ) \\
                            &{} \quad \times \prod_{t}((Y-128)^2 - \beta_{t} Y(Y-128) +\gamma_{t}Y^2 ) \pmod{p}. 
\end{split}\label{eq:factorizationofss2p}
\end{equation} 
By this expression, the number of linear factors of $ss_{p}^{(2)}(Y)$ is given by $L^{(2)}(p) = \varepsilon +\nu  +2A$, and hence $A=\frac{1}{2}(L^{(2)}(p) -\varepsilon -\nu )$.

We calculate linear factors and factors quadratic in $Y^{2}/(Y-64)$ in the above product (\ref{eq:factorizationofss2p}) separately.
The linear factor is
\[ (Y-128)^2 - \ell_{r} Y^2 = (Y-64) \left\{ (1-\ell_{r}) \frac{Y^2}{Y-64}  -256 \right\}, \]
the quadratic factor is
\begin{align*}
&{} ((Y-128)^2 + \beta_{t} Y(Y-128) +\gamma_{t}Y^2 ) ((Y-128)^2 - \beta_{t} Y(Y-128) +\gamma_{t}Y^2 ) \\
&= (Y-64)^2 \left\{ (1+\beta _{t} +\gamma _{t}) (1-\beta _{t} +\gamma _{t}) x^2 -256 (2-\beta_{t}^2 +2\gamma _{t}) x +256^2 \right\} \\
&=: (Y-64)^2 f_{t}(x),
\end{align*}
where $x = Y^{2}/(Y-64)$. The discriminant of  the quadratic polynomial $f_{t}(x)$ is equal to
\[ 256^2 (2-\beta_{t}^2 +2\gamma _{t})^2 -4 \cdot 256^2  (1+\beta _{t} +\gamma _{t}) (1-\beta _{t} +\gamma _{t}) = 256^2 \beta_{t}^2 (\beta _{t}^2 -4 \gamma _{t}),  \]
and by the assumption of $\beta_{t}$ and $\gamma_{t}$, we have
\[ \left( \frac{256^2 \beta_{t}^2 (\beta _{t}^2 -4 \gamma _{t})}{p} \right) = \left( \frac{\beta _{t}^2 -4 \gamma _{t}}{p} \right) =-1.  \]
Therefore the quadratic polynomial $f_{t}(Y^{2}/(Y-64))$ is irreducible. For simplicity, we put
\[ ss_{p}^{(2)}(Y) = e_{p}\, Y^{\varepsilon } (Y-128)^{\nu } (Y-64)^{m} \prod_{r=1}^{A} ( \text{lin.} )  \prod_{s=1}^{B} (\text{lin.}) \prod_{t} ( \text{irred. quad.} ),  \]
where ``lin.'' and `` irred. quad.'' mean a linear factor and a irreducible quadratic factor with $Y^{2}/(Y-64)$ as a variable, respectively.
We substitute this expression for Proposition \ref{prop:algrelofssp}:
\begin{align*}
&{} ss^{(2*)}_{p} \left( \frac{Y^2}{Y-64} \right) =  \frac{ Y^{\varepsilon } (Y-128)^{\nu}}{(Y-64)^{m+\varepsilon +\nu }} ss^{(2)}_{p} (Y) \\
&=  e_{p}\, \frac{ Y^{2\varepsilon } (Y-128)^{2\nu}}{(Y-64)^{\varepsilon +\nu }} \prod_{r=1}^{A} ( \text{lin.} )  \prod_{s=1}^{B} (\text{lin.}) \prod_{t} ( \text{irred. quad.} ) \\
&= e_{p}\,  \left( \frac{Y^2}{Y-64} \right)^{\varepsilon } \left( \frac{Y^2}{Y-64} -256 \right)^{\nu }  \prod_{r=1}^{A} ( \text{lin.} )  \prod_{s=1}^{B} (\text{lin.}) \prod_{t} ( \text{irred. quad.} )  \pmod{p}.
\end{align*}
Therefore the number of linear factors $L^{(2*)}(p)$ of $ss_{p}^{(2*)}(X)$ is given by
\begin{align*}
L^{(2*)}(p) &= \varepsilon +\nu +A +B \\
&= \varepsilon +\nu +\frac{1}{2} \left( L^{(2)}(p) -\varepsilon -\nu \right) + \frac{1}{4} \left\{ h(\sqrt{-2 p}) - 2 (\varepsilon + \nu ) \right\} \\
&= \frac{1}{2} L^{(2)}(p) + \frac{1}{4} h(\sqrt{-2 p}).
\end{align*}
By \cite[Proposition 4]{brillhart2004class}, the polynomial $W_{(p-1-\delta )/3}(1-x/27) \pmod{p}$ factors into linear and quadratic polynomials.
Therefore, in the same way as in the above case, the polynomial 
\begin{align*}
ss_{p}^{(3)}(Y) &= Y^{\delta} (Y-27)^{2m+\delta} W_{2m+\delta}\left( \frac{27}{27-Y} \right)  \\
&\equiv Y^{2m+2\delta } P_{2m+\delta } \left( 1- \frac{54}{Y} \right) \pmod{p}
\end{align*}
factors into linear and quadratic polynomials and we finally obtain
\begin{align*}
L^{(3*)}(p) &= 2\delta +\frac{1}{2} \left( L^{(3)}(p) -2\delta \right) + \frac{1}{4} \left\{ a_{p} h(\sqrt{-3 p}) -4  \delta \right\} \\
&= \frac{1}{2} L^{(3)}(p) + \frac{1}{4}  a_{p} h(\sqrt{-3 p}).
\end{align*}
\end{proof}


\begin{proof}[Proof of Theorem \ref{thm:relofspgp}]
Let $D$ be a fundamental discriminant of a quadratic field $K$ and $\chi_{D} $ be the mod $|D|$ primitive Dirichlet character. 
For $D<0$, by the Dirichlet class number formula, the class number $h_{K}$ of $K$ is given by
\[ h_{K} = \frac{w \sqrt{|D|}}{2\pi} L(1, \chi_{D}) , \quad L(s, \chi_{D} ) = \sum_{n=1}^{\infty} \frac{\chi_{D}(n)}{n^{s}}, \]
where $w$ is the number of fundamental units of $K$. For $p \ge 5, \, N \in \{1,2,3\} $ and $K=\mathbb{Q}(\sqrt{-Np})$, we have $w=2$ and 
\[ h(\sqrt{-Np}) < \frac{2\sqrt{\mathstrut Np}}{\pi} \log (4Np) \]
with the help of the well known estimate $ |L(1, \chi_{D})| < \log |D|$ (see \cite[\S 9]{bateman1950remarks}). Thus we have
\begin{align*}
\deg ss_{p}^{(2*)}(X) - L^{(2*)}(p) &\ge \frac{p-1}{8} - \left( \frac{1}{2} L^{(2)}(p) + \frac{1}{4} h(\sqrt{-2p}) \right) \\
&\ge \frac{p-1}{8} - \left( \frac{3}{2} h(\sqrt{-p}) + \frac{1}{4} h(\sqrt{-2p}) \right) \\
&> \frac{p-1}{8} - \frac{\sqrt{p}}{2 \pi} \left( 6 \log (4p) +\sqrt{2} \log (8p)  \right) \\
&> 0 \qquad \text{if $p \geq p_{1266} = 10321$},
\end{align*}
where $p_{n}$ means the $n$-th prime number. Using Mathematica, we checked directly that $\deg ss_{p}^{(2*)}(X) = L^{(2*)}(p) \Leftrightarrow p\mid \# \mathbb{B}$ for $p<10321$.
By a similar estimate, we have
\[ \deg ss_{p}^{(3*)}(X) - L^{(3*)}(p) > 0 \qquad \text{if $p \geq p_{3280} = 30341$} \]
and checked directly that $\deg ss_{p}^{(3*)}(X) = L^{(3*)}(p) \Leftrightarrow p\mid \# \mathbb{B}$ for $p<30341$.
\end{proof}

For the supersingular polynomial $ss_{p}^{(N)}(X)$ related to congruence subgroup $\Gamma_{0}(N)$, we can not find a remarkable correspondence of the number of linear factors and sporadic groups.
But when limiting the number of quadratic factor of $ss_{p}^{(N)}(X)$, the following holds.
\begin{thm}
For a prime number $p$,
\begin{align*}
0 \le \tfrac{1}{2} (\deg ss^{(2)}_{p}(X) - L^{(2)}(p) ) \le 1 &{}\iff p\mid \# \mathbb{B}  ,\\
0 \le \tfrac{1}{2} (\deg ss^{(2)}_{p}(X) - L^{(2)}(p) ) \le 3 &{}\iff p\mid \# \mathbb{M} ,\\
0 \le \tfrac{1}{2} (\deg ss^{(3)}_{p}(X) - L^{(3)}(p) ) \le 2 &{}\iff p\mid \# Fi^{\prime}_{24} ,\\
0 \le \tfrac{1}{2} (\deg ss^{(3)}_{p}(X) - L^{(3)}(p) ) \le 5 &{}\iff p\mid \# \mathbb{M}.
\end{align*}
\end{thm}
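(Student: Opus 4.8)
The plan is to read $\tfrac{1}{2}\big(\deg ss^{(N)}_p(X) - L^{(N)}(p)\big)$ as a count and then repeat the class-number estimate behind Theorem \ref{thm:relofspgp}. By Lemma \ref{lem:therootsofW4} and the factorization (\ref{eq:factorizationofss2p}) used in the proof of Theorem \ref{thm:NofLF}, each $ss^{(N)}_p(X)$ splits over $\mathbb{F}_p$ into distinct linear and irreducible quadratic factors; writing $Q^{(N)}(p)$ for the number of its quadratic factors we thus have $\deg ss^{(N)}_p(X) = L^{(N)}(p) + 2Q^{(N)}(p)$, so that
$$\tfrac{1}{2}\big(\deg ss^{(N)}_p(X) - L^{(N)}(p)\big) = Q^{(N)}(p) \ge 0 .$$
Hence the left inequality in every line holds automatically, and it remains to prove, for $(N,c,G)$ equal to $(2,1,\mathbb{B})$, $(2,3,\mathbb{M})$, $(3,2,Fi^{\prime}_{24})$ and $(3,5,\mathbb{M})$, that $Q^{(N)}(p) \le c \iff p \mid \#G$.

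First I would make $Q^{(N)}(p)$ explicit. Using the degree formulas together with $L^{(2)}(p) = \tfrac{1}{4}\{2 + (1-(\tfrac{-1}{p}))(4+(\tfrac{-2}{p}))\}h(\sqrt{-p})$ and $L^{(3)}(p) = 2\delta\,L(p)$ (the latter from $N_{1}(p,[p/3]) = \delta(2L(p)-1)$ in Theorem \ref{thm:numoflinw}), one finds
$$Q^{(2)}(p) = \tfrac{p-1}{8} + \tfrac{\varepsilon}{4} - \tfrac{1}{2}L^{(2)}(p), \qquad Q^{(3)}(p) = \tfrac{p-1}{6} + \tfrac{\delta}{3} - \delta\,L(p),$$
each a term linear in $p$ minus a bounded multiple of a single class number. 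Since $L^{(2)}(p) \le 3\,h(\sqrt{-p})$ and $L(p) \le 2\,h(\sqrt{-p})$, the bound $h(\sqrt{-p}) < \tfrac{2\sqrt{p}}{\pi}\log(4p)$ from the proof of Theorem \ref{thm:relofspgp} gives $Q^{(2)}(p) \ge \tfrac{p-1}{8} - \tfrac{3\sqrt{p}}{\pi}\log(4p)$ and $Q^{(3)}(p) \ge \tfrac{p-1}{6} - \tfrac{4\sqrt{p}}{\pi}\log(4p)$ (the level-$3$ estimate also covering $\delta=0$, where $Q^{(3)}(p)=\tfrac{p-1}{6}$). Each right-hand side exceeds the relevant $c \in \{1,2,3,5\}$ once $p$ passes an explicit threshold $p_{0}$, so $Q^{(N)}(p) \le c$ can hold only for $p < p_{0}$.

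It then remains to verify, for the finitely many primes $p < p_{0}$ — treating $p \in \{2,3\}$ by $ss^{(N)}_p(X)=X$, whence $Q^{(N)}(p)=0$ — that $Q^{(N)}(p) \le c$ holds exactly when $p \mid \#G$; this is an analogous Mathematica verification, of the same kind as in the proof of Theorem \ref{thm:relofspgp}, now testing the inequality $Q^{(N)}(p) \le c$ rather than an equality. The identity $Q^{(N)}(p) = B(p,[p/\kappa]) + 2\,Q^{(N*)}(p)$ with $\kappa = 4,3$, which follows from the factorization (\ref{eq:factorizationofss2p}) and Morton's count $B$ of Theorem \ref{thm:mortonbqf}, streamlines the case $(2,1,\mathbb{B})$: as $B,Q^{(2*)}(p) \ge 0$ are integers, $Q^{(2)}(p) \le 1 \iff Q^{(2*)}(p)=0$ and $B(p,[p/4]) \le 1$, and then Theorem \ref{thm:relofspgp} reduces the work to checking $B(p,[p/4]) \le 1$ over the explicit prime divisors of $\mathbb{B}$; for the two $\mathbb{M}$-cases the bound $c$ is too large for such a reduction to a single star-condition, and one falls back on the direct finite check.

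The main obstacle is practical rather than structural. Analytically, the larger constants $c=3$ and $c=5$ raise the threshold $p_{0}$ above the values $10321$ and $30341$ met in Theorem \ref{thm:relofspgp}, so I would sharpen the crude bounds $L^{(2)}(p)\le 3h(\sqrt{-p})$ and $L(p)\le 2h(\sqrt{-p})$ according to the residue class of $p$ to keep $p_{0}$, and hence the finite verification, manageable. Conceptually, that these finite prime sets coincide exactly with $\#\mathbb{B}$, $\#Fi^{\prime}_{24}$ and (twice) $\#\mathbb{M}$ is — as with Ogg's observation and Theorem \ref{thm:NofLFofLevel1ssp} — a numerical coincidence confirmed prime by prime, not something the estimate itself explains.
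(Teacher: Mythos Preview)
Your proposal is correct and follows exactly the approach the paper intends: its proof is the single sentence ``Use the class number estimate,'' and you have simply written out what that entails --- interpreting $\tfrac{1}{2}(\deg - L)$ as the quadratic-factor count $Q^{(N)}(p)$, bounding it below via $h(\sqrt{-p}) < \tfrac{2\sqrt{p}}{\pi}\log(4p)$, and finishing with a finite computer check. Your additional identity $Q^{(N)}(p) = B(p,[p/\kappa]) + 2\,Q^{(N*)}(p)$ and the resulting shortcut for $(2,1,\mathbb{B})$ go beyond what the paper says but are correct and read off directly from the factorization (\ref{eq:factorizationofss2p}); one small inaccuracy is your remark that the thresholds exceed $10321$ and $30341$ --- those numbers arose in Theorem~\ref{thm:relofspgp} because $h(\sqrt{-2p})$ and $h(\sqrt{-3p})$ entered, whereas here only $h(\sqrt{-p})$ appears, so the cutoffs are actually smaller.
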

\begin{proof}
Use the class number estimate.
\end{proof}


\section{Conjectures and Observations }\label{Conjectures  and Observations}
Throughout this section, we assume that the number $N$ is a prime divisor of  the monster group $\mathbb{M}$:
\[ N \in  \mathfrak{S} := \{ 2,3,5,7,11,13,17,19,23,29,31,41,47,59,71 \}. \]
It is well known that the modular curve $X_{0}(N)^{+}$ for prime $N$ has genus zero if and only if $N \in \mathfrak{S}$.
The Mckay-Thompson series defined in \cite{conway1979monstrous} is a generator of the field of modular functions (it is also called ``Hauptmodul'') on certain group and has some constant term.
But when obtaining the natural hypergeometric (Heun) polynomial representation of $ss_{p}^{(N*)}(X)$ for $N=2,3,5$ and $7$, we can not ignore the difference between these constant terms.
Therefore we need to choose the appropriate constant term as follows: the Hauptmodul $j_{N}^{*}(\tau )$ on the Fricke group $\Gamma_{0}^{*}(N) \;(N=2,3,5,7)$ is defined by
\begin{align*}
j_{2}^{*}(\tau ) &= \left( \frac{\eta (\tau )}{\eta (2\tau )} \right)^{24} +128 + 4096 \left( \frac{\eta (2\tau )}{\eta (\tau )} \right)^{24} = q^{-1} + 104+4372 q + \cdots , \\
j_{3}^{*}(\tau ) &= \left( \frac{\eta (\tau )}{\eta (3\tau )} \right)^{12} +54 +729 \left( \frac{\eta (3\tau )}{\eta (\tau )} \right)^{12} = q^{-1} + 42+783 q + \cdots , \\
j_{5}^{*}(\tau ) &= \left( \frac{\eta (\tau )}{\eta (5\tau )} \right)^{6} +22 +125 \left( \frac{\eta (5\tau )}{\eta (\tau )} \right)^{6} = q^{-1} + 16+134 q+ \cdots , \\
j_{7}^{*}(\tau ) &= \left( \frac{\eta (\tau )}{\eta (7\tau )} \right)^{4} +13 +49 \left( \frac{\eta (7\tau )}{\eta (\tau )} \right)^{4} = q^{-1} + 9+51 q+ \cdots ,
\end{align*}
where $q=e^{2\pi i \tau}$ and $\eta (\tau )$ is the Dedekind eta function defined by $ \eta (\tau ) = q^{1/24} \prod_{n=1}^{\infty} (1-q^{n})$.
Because the number of linear factors or the degree of the supersingular polynomials is unchanged by the value of the constant term of the Hauptmodul, 
we chose 0 as the value of the constant term of $j_{N}^{*}(\tau )$ for $N \in \mathfrak{S}-\{2,3,5,7\}$.


\begin{prop}
Let $N \in \mathfrak{S}$. Then there exists certain monic polynomial $a_{N}(Y)$ and $ b_{N}(Y) \in \mathbb{Z}[Y]$ of degree $N$ and $N+1$ respectively, such that
\begin{align}
j(\tau ) + j(N \tau ) = a_{N}(j_{N}^{*}(\tau )), \quad  j(\tau ) j(N \tau ) = b_{N}(j_{N}^{*}(\tau )) . \label{eq:polyab}
\end{align}
\end{prop}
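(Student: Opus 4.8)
The plan is to realize both symmetric functions $j(\tau)+j(N\tau)$ and $j(\tau)j(N\tau)$ as elements of the function field of the Fricke group $\Gamma_0^*(N)=\langle \Gamma_0(N), W_N\rangle$, where $W_N$ is the Fricke involution $\tau \mapsto -1/(N\tau)$, and then to control their pole structure. First I would check invariance under $\Gamma_0^*(N)$. Invariance under $\Gamma_0(N)$ is immediate for $j(\tau)$, and for $j(N\tau)$ one verifies directly that if $\gamma=\left(\begin{smallmatrix} a & b \\ c & d\end{smallmatrix}\right)\in\Gamma_0(N)$ then $N\gamma\tau=\gamma'(N\tau)$ with $\gamma'=\left(\begin{smallmatrix} a & Nb \\ c/N & d\end{smallmatrix}\right)\in SL_2(\mathbb{Z})$, using $N\mid c$. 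For the involution one computes $j(-1/(N\tau))=j(N\tau)$ and $j\bigl(N\cdot(-1/(N\tau))\bigr)=j(\tau)$, so $W_N$ simply interchanges $j(\tau)$ and $j(N\tau)$; the two symmetric functions are therefore fixed and lie in the field of $\Gamma_0^*(N)$-invariant functions.

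Since $N\in\mathfrak{S}$, the curve $X_0(N)^+$ has genus zero and $j_N^*$ is a Hauptmodul, so this invariant field is exactly $\mathbb{C}(j_N^*)$, and each symmetric function is a rational function of $j_N^*$. To upgrade this to a polynomial I would examine poles. Both $j(\tau)$ and $j(N\tau)$ are holomorphic on the upper half-plane $\mathbb{H}$, so the only possible poles are at the cusp. For prime $N$ the involution $W_N$ swaps the two cusps $0,\infty$ of $X_0(N)$, so $X_0(N)^+$ has a single cusp and the covering $X_0(N)\to X_0(N)^+$ is unramified there; hence pole orders at this cusp may be read off from the $q$-expansions at $\infty$, with $q=e^{2\pi i\tau}$. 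From $j(\tau)=q^{-1}+744+\cdots$ and $j(N\tau)=q^{-N}+744+\cdots$ one gets $j(\tau)+j(N\tau)\sim q^{-N}$ and $j(\tau)j(N\tau)\sim q^{-(N+1)}$, while $j_N^*=q^{-1}+\cdots$. A $\Gamma_0^*(N)$-invariant function holomorphic on $\mathbb{H}$ with a pole only at the cusp is a polynomial in $j_N^*$ whose degree equals the order of that pole; comparing the leading $q$-coefficients (all equal to $1$) shows that $a_N$ is monic of degree $N$ and $b_N$ is monic of degree $N+1$.

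For the integrality $a_N,b_N\in\mathbb{Z}[Y]$ I would argue by successive subtraction on $q$-expansions. The series of $j(\tau)$, $j(N\tau)$ and $j_N^*$ all have integer coefficients, and $j_N^*=q^{-1}+(\text{integer power series})$ is normalized with leading coefficient $1$. Given a modular function with integer $q$-expansion and a pole of order $n$ only at the cusp, subtracting the appropriate integer multiple of $(j_N^*)^n$ lowers the pole order by one while keeping the expansion integral; iterating determines all coefficients as integers. Applying this to the two symmetric functions yields $a_N,b_N\in\mathbb{Z}[Y]$ of the asserted degrees.

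I expect the main technical point to be the precise cusp analysis, namely confirming that $X_0(N)^+$ has a single cusp at which the quotient map from $X_0(N)$ is unramified, so that pole orders are computed honestly in the parameter $q$ and there is no hidden contribution from the cusp $0$. Once this is settled, the invariance check, the genus-zero input from $N\in\mathfrak{S}$, and the integral division argument are all routine.
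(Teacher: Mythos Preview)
Your proposal is correct and uses essentially the same ingredients as the paper's proof: the invariance of $j(\tau)+j(N\tau)$ and $j(\tau)j(N\tau)$ under $\Gamma_0(N)$ and the Fricke involution, together with matching of integral $q$-expansions against powers of $j_N^*$. The only difference is organizational: the paper first builds $a_N\in\mathbb{Z}[Y]$ by successive subtraction so that $F(\tau):=j(\tau)+j(N\tau)-a_N(j_N^*(\tau))=O(q)$, then checks $\Gamma_0^*(N)$-invariance of $F$ to conclude $F\equiv 0$, whereas you establish invariance first and then read off the polynomial from the pole at the cusp; both routes are equivalent.
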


\begin{proof}
We prove the first result, the second result being similar.
The Fourier coefficient of $j(\tau ) = q^{-1} + \cdots , \; j_{N}^{*}(\tau ) = q^{-1} + \cdots $ is in $\mathbb{Z}$, there exists degree $N$ polynomial $a_{N}(Y) \in \mathbb{Z}[Y]$ such that
\[ F(\tau ) := j(\tau ) + j(N \tau ) -a_{N}(j_{N}^{*}(\tau )) = O(q). \]
To prove $F(\tau ) = 0$, we check the modularity of $F(\tau )$ under the slash action. For $\alpha = \bigl( \begin{smallmatrix} N & 0 \\ 0 & 1 \end{smallmatrix} \bigr)$ and $\gamma = \bigl( \begin{smallmatrix} a & b \\ c & d \end{smallmatrix} \bigr) \in \Gamma_{0}(N) $, since $\alpha \gamma \alpha ^{-1} \in SL_{2}(\mathbb{Z})$, we have
\begin{align*}
&{} j(\tau ) |_{0}\, \gamma = j(\tau ) ,\\
&{} j(N \tau ) |_{0}\, \gamma = j(\tau ) |_{0}\, \alpha \gamma =  ( j(\tau ) |_{0}\, \alpha \gamma \alpha ^{-1} )|_{0}\, \alpha =j(\tau ) |_{0}\, \alpha = j(N \tau ) .
\end{align*}
For $\gamma \in \Gamma_{0}^{*}(N) = \langle \Gamma_{0}(N) ,\, w_{N} \rangle ,\; w_{N} := \bigl( \begin{smallmatrix} 0 & -1/\sqrt{N} \\ \sqrt{N} & 0 \end{smallmatrix} \bigr) $,  we have
\begin{align*}
&{} j(\tau ) |_{0}\, w_{N} = j \left( \frac{-1/\sqrt{N}}{\sqrt{N}\tau } \right) =j\left( \frac{-1}{N \tau } \right) =j(N \tau ) , \\
&{} j(N\tau ) |_{0}\, w_{N} = j \left( \frac{-1}{\tau } \right) =j(\tau ) .  
\end{align*}
Therefore, $(j(\tau ) + j(N \tau ) )|_{0}\, \gamma = j(\tau )+ j(N \tau )$ and by definition $j_{N}^{*}(\tau ) |_{0} \gamma = j_{N}^{*}(\tau )$ for $\gamma \in \Gamma_{0}^{*}(N)$.
It means $F(\tau ) |_{0} \gamma = F(\tau )$ for $\gamma \in \Gamma_{0}^{*}(N)$. The function $F(\tau)$ is meromorphic in the upper half plane, and hence $F(\tau )$ is constant and $F(\tau ) = F(i \infty) =0$. 
\end{proof}


We define the polynomial $R_{N}(X,Y) \in \mathbb{Z}[X,Y]$ in two variables by using the polynomials $a_{N}(Y)$ and $b_{N}(Y)$ in (\ref{eq:polyab}) as follows:
\begin{equation}
R_{N}(X,Y) := X^2 -a_{N}(Y) X +b_{N}(Y). \label{eq:RN}
\end{equation}
The equations $R_{N}(j(\tau ),\, j_{N}^{*}(\tau )) = R_{N}(j(N\tau ),\, j_{N}^{*}(\tau )) =0 $  hold trivially by the definition of $R_{N}(X,Y)$.
We note that the polynomial $a_{N}(Y)$ and $b_{N}(Y)$ depend on the constant term of $j_{N}^{*}(\tau )$.
\begin{ex}
\begin{align*}
R_{2}(X,Y) &= X^2-X(Y^2-207 Y+3456)+(Y+144)^3 ,\\
R_{3}(X,Y) &= X^2-X Y(Y^2-126 Y+2944)+Y(Y+192)^3 ,\\
R_{5}(X,Y) &= X^2- X(Y^5-80 Y^4+1890 Y^3-12600 Y^2+7776 Y+3456) \\
                   &{} \quad +(Y^2+216 Y+144)^3 ,\\
R_{7}(X,Y) &= X^2- X Y(Y^2-21 Y+8) (Y^4-42 Y^3+454 Y^2-1008 Y-1280) \\
                   &{} \quad +Y^2 (Y^2+224 Y+448)^3.
\end{align*}
\end{ex}
Following the definition of $ss_{p}^{(N)}(X)$, we now newly define the supersingular polynomials $ss_{p}^{(N*)}(X)$ (including the cases $N=2,3$). 
For $N \in \mathfrak{S}$, we prepare the set
\begin{align*}
S_{N}^{*} := \{ j_{N}^{*} \in \overline{\mathbb{F}}_{p} \,|\,  \text{the $j$-invariant determined by $R_{N}(j,\, j_{N}^{*})=0$ is supersingular}  \}
\end{align*}
and define
\begin{equation}
ss_{p}^{(N*)}(X) = \prod_{ j_{N}^{*} \in S_{N}^{*}} (X-j_{N}^{*}) \in \overline{\mathbb{F}}_{p}[X] \label{def:sspNstar}
\end{equation}
for the prime $p \ge 5$. Note that we ignore the duplication of elements of the set $S_{N}^{*}$.
Because the set $S_{N}^{*}$ is stable under the conjugation over $\mathbb{F}_{p}$,
we have $ss^{(N*)}_{p}(X) \in \mathbb{F}_{p}[X]$.

\begin{ex}
Since $ss_{37}(X) = (X+29)(X^2 +31X +31) \pmod{37}$, the resultant of $ss_{37}(X)$ and $R_{2}(X,Y)$ with respect to the variable $X$ is congruent to 
\begin{equation}
(Y+3) (Y+25)^{2} (Y+27)^{2} (Y^2+14Y+34)^{2} \pmod{37}. \label{eq:resultant37}
\end{equation}
By ignoring the multiplicity of the roots of this, we obtain the supersingular polynomial $ ss_{37}^{(2*)}(Y) $ for $\Gamma_{0}^{*}(2)$:
\[ ss_{37}^{(2*)}(Y) =  (Y+3) (Y+25) (Y+27) (Y^2+14Y+34) \pmod{37} . \]
\end{ex}

\begin{ques}
Are all supersingular $j_{N}^{*}$-invariants contained in $\mathbb{F}_{p^2}$ for any $N \in \mathfrak{S}$?
\end{ques}

This definition of $ss_{p}^{(N*)}(X)$ for $N=2,3$ and Definition \ref{def:sakaissp} by Koike and Sakai are equivalent. 
Combining the algebraic relations (\ref{eq:algreljjn}) and 
\begin{equation}
j_{2}(\tau)^{2} -j_{2}^{*}(\tau ) (j_{2}(\tau )-64) =0, \quad j_{3}(\tau)^{2} -j_{3}^{*}(\tau ) (j_{3}(\tau )-27) =0, \label{eq:algrelj23andj23star}
\end{equation}
we rewrite the definition of the set $S_{N}^{*} \,(N=2,3)$ as
\begin{align}
S_{N}^{*} = \{ j_{N}^{*} \in \overline{\mathbb{F}}_{p} \,|\,  \text{the $j_{N}$-invariant determined by (\ref{eq:algrelj23andj23star}) is supersingular}  \}. \label{def:rewriteSN}
\end{align}
We obtain the degree of $ss_{p}^{(N*)}(X)\; (N=2,3)$ from the algebraic relation (\ref{eq:algrelj23andj23star})  by the same method as in \cite[Proposition 4]{tsutsumi2007atkin}.
For $p=8m+ 4\nu +2\varepsilon +1$, we have $d :=\deg ss_{p}^{(2*)}(X) = m + \nu +\varepsilon $.

Let $ss_{p}^{(2*)}(X) = (X- \alpha_{1}) \dots (X- \alpha_{d})$ be the factorization over $\overline{\mathbb{F}}_{p}$ of  $ss_{p}^{(2*)}(X)$.
Then we have
\begin{align*}
(Y-64)^{d} ss_{p}^{(2*)} \left( \frac{Y^2}{Y-64} \right) = \prod_{n=1}^{d} \bigl( Y^2 - \alpha_{n} (Y-64) \bigr).
\end{align*}
Because of the definitions of $ss_{p}^{(2)}(Y)$ and (\ref{def:rewriteSN}), the right-hand side of the above equation is divisible by  $ss_{p}^{(2)}(Y)$.
The double roots of $Y^2 - \alpha_{n} (Y-64)$ are 0 or 128 when $\alpha_{n}=0$ or 256 respectively.
Comparing the degrees $\deg ss_{p}^{(2*)}(X) = m + \nu +\varepsilon$ and $\deg ss_{p}^{(2)}(Y) = 2m+ \nu + \varepsilon$, and noting the vanishing condition $ss_{p}^{(2)}(0)=0 \pmod{p} \iff \varepsilon =1 $ and
\[ ss_{p}^{(2)}(128) = 128^{2m + \nu + \varepsilon} P_{2m + \nu}(0) = 0 \pmod{p} \iff \nu =1, \]
we have
\begin{align*}
(Y-64)^{d} ss_{p}^{(2*)} \left( \frac{Y^2}{Y-64} \right) = Y^{\varepsilon } (Y-128)^{\nu} ss_{p}^{(2)}(Y) .
\end{align*}
Therefore we obtain the algebraic relation of $ss_{p}^{(2*)}$ and $ss_{p}^{(2)}$ and the proof of Proposition \ref{prop:algrelofssp} tells us that
the hypergeometric expression of $ss_{p}^{(2*)}(X)$ in Definition \ref{def:sakaissp} coincides with our definition. The case of $ss_{p}^{(3*)}(X)$ is similar.


\subsection{Level 5 and 7}
For $N \in \mathfrak{S}-\{2,3\}$, the supersingular polynomial $ss_{p}^{(N*)}(X)$ can no longer be represented by hypergeometric polynomial.
But we conjecture that $ss_{p}^{(N*)}(X) \;(N=5,7)$ can be represented by Heun polynomials. The Heun local series is defined by
\[ Hl (a,w; \alpha , \beta , \gamma , \delta ;x) = \sum_{n=0}^{\infty} c_{n} x^n ,\]
where the coefficients $c_{n}$ are determined by the recursion
\[ c_{n+1} = \frac{n\{ (n-1+\gamma )(1+a) +a\delta +\varepsilon \} +w  }{(n+1)(n+\gamma ) a } c_{n} - \frac{(n-1+\alpha )(n-1+\beta )}{(n+1)(n+\gamma ) a } c_{n-1}  \] 
and the initializations $c_{-1}=0, c_{0}=1$.  (Here, $\varepsilon := \alpha + \beta +1 -\gamma -\delta $ and of course, this local symbols $ \delta, \varepsilon$ are different from the previous number (\ref{eq:nudeleps}).)
This function is a solution of the Heun equation
\[ \frac{d^{2}y}{dx^{2}} + \left( \frac{\gamma }{x} + \frac{\delta }{x-1} + \frac{\varepsilon }{x-a} \right) \frac{dy}{dx} +\frac{\alpha \beta x - w}{x(x-1)(x-a)} y =0, \]
which has four singular points $x=0,1,a,\infty$. In particular, $Hl$ is a polynomial when $\alpha$ or $\beta$ is a negative integer. 
There are 192 solutions of the Heun equation as an analogue of Kummer's 24 solutions of Gauss hypergeometric differential equation (see \cite{maier2007192}).
One solution $Hl$ has 24 equivalent expressions, for example, the following identity holds generically near $x=0$:
\begin{equation}
\begin{split}
&{} Hl (a,w; \alpha , \beta , \gamma , \delta ;x) \\
&= (1-x)^{1-\delta } (1-\tfrac{x}{a})^{1-\varepsilon } Hl ( a, w' ; -\beta +\gamma +1 , -\alpha +\gamma +1 , \gamma , 2-\delta  ; x ) , 
\end{split}\label{eq:algrelheun}
\end{equation}
where $w' = w -\gamma \{ (\delta -1) a +\varepsilon -1  \}$.

\begin{conj}
\begin{enumerate}
\item For a prime number $p \ge 7$,
\[ ss_{p}^{(5*)}(X) = X^{m_{5}} (X^2-44X-16)^{\mu_{5}} Hl_{5}(X) \pmod{p} \]
where 
\begin{align*}
&{}m_{5} = \frac{p-1}{4} +\frac{1}{4}\left( 1- \left( \frac{-1}{p}\right)  \right) -\mu_{5},\quad \mu_{5}=\frac{1}{2}\left( 1- \left( \frac{-5}{p}\right)  \right) ,\quad \phi =\frac{1+\sqrt{5}}{2}, \\
&{}Hl_{5}(X) = Hl \left(- \phi^{10} , -\frac{(22\mu_{5} +3) \phi^5}{4} ; -m_{5} , \mu_{5}+\frac{1}{2} + \frac{1}{4} \left( \frac{-1}{p}\right) ,1, \mu_{5}+\frac{1}{2} ; \frac{4\phi^5}{X} \right).
\end{align*}
\item For a prime number $p=5$ and $p \ge 11$,
\[ ss_{p}^{(7*)}(X) = X^{m_{7}} (X+1)^{\mu_{7}} (X-27)^{\mu_{7}} Hl_{7}(X) \pmod{p} \]
where 
\begin{align*}
&{}m_{7} = \frac{p-1}{3} +\frac{1}{3}\left( 1- \left( \frac{-3}{p}\right)  \right) -\mu_{7},\quad \mu_{7}=\frac{1}{2}\left( 1- \left( \frac{-7}{p}\right)  \right) , \\
&{}Hl_{7}(X) = Hl \left(- 27 , -(13\mu_{7} +2) ; -m_{7} , \mu_{7} + \frac{1}{2} +\frac{1}{6}\left( \frac{-3}{p}\right) ,1, \mu_{7}+\frac{1}{2} ; \frac{27}{X} \right) .
\end{align*}
\end{enumerate}
\end{conj}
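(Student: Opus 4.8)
The plan is to follow the proof of Theorem~\ref{thm:DEU} as literally as possible, replacing the Gauss hypergeometric equation, which has the three singular points $j=0,1728,\infty$, by the Heun equation forced by the richer geometry of $\Gamma_0^*(N)$ for $N=5,7$. The structural reason a Heun equation must appear is that the genus-zero curve $X_0(N)^+$ carries \emph{four} distinguished points relevant to the weight-$4$ Eisenstein series $\mathcal{E}_4^{(N)}$ on $\Gamma_0^*(N)$ (generalizing $(2E_2(2\tau)-E_2(\tau))^2$ and $(\tfrac{3E_2(3\tau)-E_2(\tau)}{2})^2$ of Section~\ref{Preliminaries}): its cusp, the two elliptic points, and the zero of $\mathcal{E}_4^{(N)}$. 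Hence the second-order differential equation satisfied by $(\mathcal{E}_4^{(N)})^{1/4}$ in the Hauptmodul $j_N^*$ has four regular singular points. The first step is therefore to produce the Heun analogue of the weight-$4$ expansions recorded in Section~\ref{Preliminaries}: to show $(\mathcal{E}_4^{(N)})^{1/4}=Hl(a,w;\alpha,\beta,\gamma,\delta;c/j_N^*)$ for explicit \emph{generic} parameters, where for $N=5$ one has $c=4\phi^5$ and $a=-\phi^{10}$, the two elliptic points sitting at the roots of $X^2-44X-16$ (namely $X=4\phi^5,\,-4\phi^{-5}$, i.e.\ the Heun singularities $x=1$ and $x=a$), and for $N=7$ the analogous data $c=27,\ a=-27$ with elliptic points $X=27,-1$. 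Verifying this reduces to checking that the period function solves a second-order ODE and computing its local exponents at the four singularities from the ramification of $j$ over $j_N^*$ encoded in $R_N(j,j_N^*)=0$ of~\eqref{eq:RN}.

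The second step is the Heun analogue of \cite[Proposition~5]{kaneko1998supersingular}. Writing $G_N(X):=X^{m_N}Hl(\dots;c/X)$ for the polynomial part (of degree $m_N$ in $X$) of the generic Heun series of the first step, one must show that, modulo $p$, $ss_p^{(5*)}(X)$ equals $(X^2-44X-16)^{\mu_5}G_5(X)$ and $ss_p^{(7*)}(X)$ equals $(X+1)^{\mu_7}(X-27)^{\mu_7}G_7(X)$. The elliptic-point factors are forced exactly as $(X-1728)^{\varepsilon}$ is in $X^{m+\delta}(X-1728)^{\varepsilon}$ of~\eqref{eq:hrepofssp}: the elliptic points of $\Gamma_0^*(N)$ carry complex multiplication by orders attached to $\sqrt{-N}$, so they reduce to supersingular points in characteristic $p$ precisely according to whether $p$ splits, which is why the exponents $\mu_N=\tfrac12\bigl(1-(\tfrac{-N}{p})\bigr)$ appear; the branch-dependent shifts, involving $(\tfrac{-1}{p})$ for $N=5$ and $(\tfrac{-3}{p})$ for $N=7$, reflect the order-$2$ (resp.\ order-$3$) elliptic points inherited from $\Gamma_0(N)$. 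One then reduces the generic Heun parameters modulo $p$ — just as the pair $(\tfrac{1}{12},\tfrac{5}{12})$ is reduced case by case in the proof of Theorem~\ref{thm:DEU} — and uses the $24$-fold symmetry~\eqref{eq:algrelheun} of the Heun function to normalize them into the shape stated in the conjecture, reconciling the shifts in $\gamma$ and $\delta$ against the Legendre symbols.

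The main obstacle is the accessory parameter $w$. In the hypergeometric case the three exponent pairs rigidly determine the series through a \emph{two-term} recursion, so the truncation argument behind $U_n^{\varepsilon}$ is automatic; the Heun series instead obeys a \emph{three-term} recursion carrying the free parameter $w$, and two genuinely harder facts must be established. First, one must show that the true modular period on $\Gamma_0^*(N)$ has exactly the claimed accessory value (for instance $w=-\tfrac{(22\mu_5+3)\phi^5}{4}$ when $N=5$); since $w$ is not pinned down by the local exponents alone, this has to be extracted globally from the $q$-expansion of $\mathcal{E}_4^{(N)}$ in powers of $1/j_N^*$. Second, one must prove a mod-$p$ integrality and termination statement for the three-term recursion, showing that its reduction stops at the correct degree and produces a polynomial \emph{equal} to $ss_p^{(N*)}(X)$ rather than merely a divisor or a constant multiple of it. In short, what is needed is a Heun-theoretic replacement for the clean $U_n^{\varepsilon}$ formalism together with control of the accessory parameter modulo $p$; supplying these two ingredients is what separates the present conjecture from a theorem.
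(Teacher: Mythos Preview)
The statement you are addressing is a \emph{Conjecture} in the paper, not a theorem; the paper offers no proof. The only supporting remarks are that the special case $\mu_5=\mu_7=0$ was already conjectured by Sakai, and that the factors $X^2-44X-16$ and $(X+1)(X-27)$ arise from the Heun transformation~\eqref{eq:algrelheun}. There is therefore no ``paper's own proof'' to compare your proposal against.

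Your write-up is not a proof either, and you say so yourself: it is a strategic outline that correctly identifies the analogy with Theorem~\ref{thm:DEU}, correctly locates the four singular points forcing a Heun rather than hypergeometric description, and correctly isolates the two genuine obstacles (pinning down the accessory parameter $w$ from the modular data, and proving the mod-$p$ termination/integrality of the three-term recursion so that the truncated series equals $ss_p^{(N*)}$ on the nose). These are exactly the reasons the paper leaves the statement as a conjecture. As a roadmap your outline is sound and consonant with the paper's heuristics, but it does not close either gap, so it remains at the same status as the paper: a plausible plan, not a proof.
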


The degree of $ss^{(N*)}_{p} (X)$ with $p>N \; (N=5,7)$ is provided by
\begin{align*}
\deg ss^{(5*)}_{p} (X) &= m_{5} +2\, \mu_{5} = \frac{p-1}{4} +\frac{1}{4}\left( 1- \left( \frac{-1}{p}\right)  \right) +\frac{1}{2}\left( 1- \left( \frac{-5}{p}\right)  \right), \\[3pt]
\deg ss^{(7*)}_{p} (X) &= m_{7} +2\, \mu_{7} = \frac{p-1}{3} +\frac{1}{3}\left( 1- \left( \frac{-3}{p}\right)  \right) +\frac{1}{2}\left( 1- \left( \frac{-7}{p}\right)  \right).
\end{align*}
We note that the case of $\left( \tfrac{-5}{p}\right) = \left( \tfrac{-7}{p}\right) =1$, i.e. $\mu_{5} = \mu_{7} =0$ was conjectured by Sakai \cite{sakai2014atkin}.
The polynomials $X^2-44X-16$ and $(X+1)(X-27)$ in the above conjecture are derived from the relation of the Heun local series (\ref{eq:algrelheun}).

Let $HN$ be the Harada-Norton group and $He$ be the Held group. The orders of these groups are given by
\begin{align*}
&{} \# HN = 273030912000000 =2^{14}\cdot 3^6\cdot 5^6\cdot 7\cdot 11\cdot 19,\\
&{} \# He = 4030387200 = 2^{10}\cdot 3^3\cdot 5^2\cdot 7^3\cdot 17 .
\end{align*}
The following conjecture is an analogue of Theorem \ref{thm:NofLFofLevel1ssp} for the sporadic groups $HN$ and $He$.
\begin{conj}\label{conj:ssp57andspgp}
For a prime number $p$,
\begin{align*}
\deg ss^{(5*)}_{p}(X) = L^{(5*)}(p) &{}\iff p\mid \# HN , \\
\deg ss^{(7*)}_{p}(X) = L^{(7*)}(p) &{}\iff p\mid \# He .
\end{align*}
\end{conj}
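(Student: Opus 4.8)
The plan is to reproduce, for $N=5$ and $N=7$, the two-step argument that proves Theorem \ref{thm:relofspgp}. The degree formulas for $\deg ss_{p}^{(5*)}(X)$ and $\deg ss_{p}^{(7*)}(X)$ displayed above are already in hand, so the whole problem reduces to: (i) obtaining an \emph{exact} expression for $L^{(5*)}(p)$ and $L^{(7*)}(p)$ as a finite rational linear combination of class numbers of imaginary quadratic fields, the analogue of Theorem \ref{thm:NofLF}; and (ii) estimating the resulting difference $\deg ss_{p}^{(N*)}(X)-L^{(N*)}(p)$ to show it is positive beyond an explicit bound, leaving only finitely many primes to be checked by machine. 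Guided by the cases $N=2,3$, where $L^{(2*)}(p)$ involves $h(\sqrt{-p})$ and $h(\sqrt{-2p})$ while $L^{(3*)}(p)$ involves $h(\sqrt{-p})$ and $h(\sqrt{-3p})$, I expect $L^{(5*)}(p)$ to be a combination of $h(\sqrt{-p})$ and $h(\sqrt{-5p})$, and $L^{(7*)}(p)$ a combination of $h(\sqrt{-p})$ and $h(\sqrt{-7p})$, possibly with an auxiliary term of the $\delta L(p)$ type already seen in $L^{(3*)}(p)$.

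For step (i) I would first try to establish the Heun-polynomial representations of $ss_{p}^{(5*)}(X)$ and $ss_{p}^{(7*)}(X)$ conjectured just above, since they are meant to play the role that the hypergeometric representation of Definition \ref{def:sakaissp} played for $N=2,3$. With such a representation in hand, the idea is to find the analogue of the algebraic relation of Proposition \ref{prop:algrelofssp}: a substitution of the form $X\mapsto Y^2/(Y-c)$, or more generally a low-degree covering coming from the defining relation $R_{N}(X,Y)$ of (\ref{eq:RN}), that turns $ss_{p}^{(N*)}$ into a polynomial whose linear and irreducible quadratic factors can be enumerated. The heart of the matter is then a counterpart of Theorems \ref{thm:numoflinw} and \ref{thm:mortonbqf}: for the Legendre family one exploits the parity $P_{m}(-x)=(-1)^{m}P_{m}(x)$ together with Brillhart--Morton's count of linear factors and Morton's count of the quadratic factors of special shape $X^2+C$; here one must supply the corresponding counts, of linear factors and of the relevant irreducible quadratics, for $Hl_5$ and $Hl_7$ modulo $p$.

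Once these class-number formulas are established, step (ii) is routine and essentially identical to the proof of Theorem \ref{thm:relofspgp}: by the Dirichlet class number formula and the estimate $|L(1,\chi_{D})|<\log|D|$ one bounds each $h(\sqrt{-Np})$ by $O(\sqrt{p}\log p)$, so that $\deg ss_{p}^{(5*)}(X)-L^{(5*)}(p)\ge \frac{p-1}{4}-O(\sqrt{p}\log p)>0$ (and likewise with $\frac{p-1}{3}$ in place of $\frac{p-1}{4}$ for $N=7$) as soon as $p$ exceeds an explicit prime $p_{0}$. It then remains to verify $\deg ss_{p}^{(N*)}(X)=L^{(N*)}(p)\iff p\mid\#HN$ (resp. $p\mid\#He$) for all $p<p_{0}$ by direct computation.

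The main obstacle is step (i), and more precisely the absence of an analogue of Morton's quadratic-factor theorem (Theorem \ref{thm:mortonbqf}) for Heun polynomials. For $N=2,3$ the entire count rested on the well-developed arithmetic of the Legendre family $W_{m}$, $P_{m}$ and on Morton's identification of the quadratic factors $X^2+C$ with class numbers; no comparable machinery is presently available for $Hl_5$ and $Hl_7$. I therefore expect the more promising route is to bypass the explicit factorization altogether and count the $\mathbb{F}_p$-rational supersingular $j_{N}^{*}$-invariants geometrically, as the $\mathbb{F}_p$-rational supersingular points on the curves $X_0(5)^+$ and $X_0(7)^+$, through CM theory and optimal embeddings into the relevant quaternion orders. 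This is exactly the circle of ideas in Aricheta's work \cite{aricheta2018supersingular} on supersingular points of modular curves not defined over $\mathbb{F}_p$, and extracting from it the precise constants appearing in $L^{(5*)}(p)$ and $L^{(7*)}(p)$ is, I expect, the crux of the proof.
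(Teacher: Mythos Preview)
The statement you are trying to prove is labeled \textbf{Conjecture} in the paper, and the paper offers no proof of it; it sits in Section~\ref{Conjectures and Observations} alongside several other open conjectures. So there is no ``paper's own proof'' to compare your proposal against.

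That said, your outline is a sensible extrapolation of the method used for Theorem~\ref{thm:relofspgp}, and you are candid about exactly where it breaks down. The two-step template (exact class-number formula for $L^{(N*)}(p)$, then analytic bound plus finite check) is the right shape, and step~(ii) would indeed go through verbatim once step~(i) is available. But step~(i) is not a gap you can close with the tools in this paper: the Heun representations of $ss_p^{(5*)}$ and $ss_p^{(7*)}$ are themselves only conjectural here, and even granting them, you correctly observe that there is no analogue of Theorems~\ref{thm:numoflinw} and~\ref{thm:mortonbqf} for Heun polynomials. Your fallback suggestion---count $\mathbb{F}_p$-rational supersingular points on $X_0(5)^+$ and $X_0(7)^+$ directly via quaternion orders and CM, in the spirit of \cite{aricheta2018supersingular}---is precisely the direction the paper gestures toward in its closing remark about Aricheta's work, but the paper does not carry this out. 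In short: your proposal is a reasonable research plan, not a proof, and the paper agrees by leaving the statement as a conjecture.
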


\begin{rem}
Harada-Norton group $HN$ and Janko group $J_{1}$ has same set of prime divisors:
\begin{align*}
&{} \# HN = 273030912000000 =2^{14}\cdot 3^6\cdot 5^6\cdot 7\cdot 11\cdot 19,\\
&{} \# J_{1} = 175560 = 2^3\cdot 3\cdot 5\cdot 7\cdot 11\cdot 19 .
\end{align*}
Therefore, we can not distinguish the sporadic groups $HN$ and $J_{1}$ by comparing sets of prime divisors.
However, the so called generalized moonshine phenomena probably suggest the $HN$ group is the right one (see \cite[\S 9]{conway1979monstrous}).
\end{rem}

Sakai gives the Heun representation of certain Eisenstein series of weight 4 on $\Gamma_{0}^{*}(5)$ and $\Gamma_{0}^{*}(7)$ in \cite{sakai2014atkin}:
\begin{align*}
\widetilde{E_{4,5}}(\tau ) &:=\left( \frac{5 E_{2}(5\tau )- E_{2}(\tau )}{4} \right)^2    = Hl \left(- \phi^{10} , -\frac{3 \phi^5}{4} ; \frac{1}{4} , \frac{3}{4} , 1 , \frac{1}{2} ; \frac{4\phi^5}{j_{5}^{*}(\tau )} \right)^4 ,\\
\widetilde{E_{4,7}}(\tau ) &:= \left( \frac{7 E_{2}(7\tau )- E_{2}(\tau )}{6} \right)^2 = Hl \left(- 27 , -2 ; \frac{1}{3} , \frac{2}{3} , 1 , \frac{1}{2} ; \frac{27}{j_{7}^{*}(\tau )} \right)^4  .
\end{align*}
On the other hand, in \cite[eq.(5.41) and Theorem 7.32]{cooper2017ramanujan}, the following equations are given:
\[ \frac{5 E_{2}(5\tau )- E_{2}(\tau )}{4} = \sum_{n=0}^{\infty} \frac{ u_{5}^{*}(n)}{ j_{5}^{*}(\tau )^{n}} , \quad  \frac{7 E_{2}(7\tau )- E_{2}(\tau )}{6} = \sum_{n=0}^{\infty} \frac{u_{7}^{*}(n) }{ j_{7}^{*}(\tau )^{n}} , \]
where
\[ u_{5}^{*}(n) = \binom{2n}{n} \left\{ \sum_{k=0}^{n} \binom{n}{k}^2 \binom{n+k}{k} \right\} ,\;  u_{7}^{*}(n) =\left\{ \sum_{k=0}^{n} \binom{n}{k}^2 \binom{2 k}{n} \binom{n+k}{k} \right\} .  \]
We note that the number $u_{5}^{*}(n)/\binom{2n}{n}$ was used to prove the irrationality of $\zeta(2)$ (not $\zeta(3)$) by Ap\'{e}ry.
The conjectural relations between the square of the supersingular polynomials and Ap\'{e}ry-like numbers for levels 5 and 7 are as follows.
\begin{conj}
\begin{enumerate}
\item For a prime number $p \ge 7$,
\[ ss^{(5*)}_{p} (X)^{2} = (X^2-44X-16)^{\mu_{5}} \sum_{n=0}^{2(m_{5} +\mu_{5})} u_{5}^{*}(n)  X^{2(m_{5} +\mu_{5})-n} \pmod{p}. \]
\item For a prime number $p=5$ and $p \ge 11$,
\[ ss^{(7*)}_{p} (X)^{2} = (X+1)^{\mu_{7}} (X-27)^{\mu_{7}} \sum_{n=0}^{2(m_{7} +\mu_{7})} u_{7}^{*}(n)  X^{2(m_{7} +\mu_{7}) -n} \pmod{p}. \]
\end{enumerate}
\end{conj}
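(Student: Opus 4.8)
The plan is to derive the squared‑polynomial congruences from the Heun‑polynomial representations of $ss_p^{(5*)}(X)$ and $ss_p^{(7*)}(X)$ conjectured above, by squaring them and invoking a Clausen‑type identity for the relevant Heun functions; this is the exact analogue of the passage from Theorem~\ref{thm:DEU} and Definition~\ref{def:sakaissp} to Theorem~\ref{thm:sqofssp123} via Clausen's formula~(\ref{eq:clausensformula}). I would carry out level $5$; level $7$ is identical with the data $(-\phi^{10},4\phi^5,X^2-44X-16)$ replaced by $(-27,27,(X+1)(X-27))$. Granting the Heun representation, I would write $ss_p^{(5*)}(X)\equiv X^{m_5}(X^2-44X-16)^{\mu_5}Hl_5(X)\pmod p$ with $Hl_5(X)$ the Heun polynomial of degree $m_5$ in $1/X$, so that squaring reduces the claim to the formal congruence
\[ (X^2-44X-16)^{\mu_5}\,Hl_5(X)^2 \equiv \sum_{n=0}^{2(m_5+\mu_5)} u_5^*(n)\,X^{2\mu_5-n} \pmod p. \]

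The key input, replacing Clausen's formula, is the exact power‑series identity
\[ Hl\!\left(-\phi^{10},-\tfrac{3\phi^5}{4};\tfrac14,\tfrac34,1,\tfrac12;\tfrac{4\phi^5}{X}\right)^2 = \sum_{n=0}^{\infty} u_5^*(n)\,X^{-n}. \]
This follows by transitivity from the two cited expansions of one and the same weight‑two form $\tfrac{5E_2(5\tau)-E_2(\tau)}{4}$: Sakai's relation $\widetilde{E_{4,5}}=Hl(\cdots)^4$ gives $Hl(\cdots)^2=\tfrac{5E_2(5\tau)-E_2(\tau)}{4}$ (the constant terms agree, fixing the sign of the square root), while Cooper's expansion identifies this form with $\sum_n u_5^*(n)/j_5^*(\tau)^n$; reading off coefficients in the local parameter $1/j_5^*(\tau)$ and renaming $j_5^*(\tau)=X$ yields the identity.

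For $\mu_5=0$ the parameters of $Hl_5$ reduce modulo $p$ to Sakai's $(\tfrac14,\tfrac34,1,\tfrac12)$, and the displayed identity applies directly. For $\mu_5=1$ they reduce to $(\tfrac54,\tfrac74,1,\tfrac32)$, which are precisely the image of Sakai's parameters under the Heun transformation~(\ref{eq:algrelheun}) with $1-\delta=1-\varepsilon=\tfrac12$; squaring that transformation gives $Hl_{\mathrm{true}}(\tfrac{4\phi^5}{X})^2=(1-\tfrac{4\phi^5}{X})(1+\tfrac{4}{\phi^5 X})\,Hl_5(X)^2$, and since $X^2(1-\tfrac{4\phi^5}{X})(1+\tfrac{4}{\phi^5 X})=X^2-44X-16$ (using $\phi^5-\phi^{-5}=11$) this reproduces exactly the prefactor $(X^2-44X-16)^{\mu_5}$ and the power $X^{2\mu_5}$ appearing in the first paragraph; the level‑$7$ factor $(X+1)(X-27)$ arises the same way from $X^2(1-\tfrac{27}{X})(1+\tfrac1X)$. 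It would then remain to pass from the formal identity to the congruence modulo $p$. Because $-m_5\equiv\tfrac14\pmod p$ and the remaining parameters of $Hl_5$ agree mod $p$ with the true ones, the coefficients of $Hl_5(X)$ coincide mod $p$ with those of the true Heun series up to degree $m_5$, so the coefficients of $ss_p^{(5*)}(X)^2$ coincide mod $p$ with $u_5^*(n)$ for $0\le n\le 2(m_5+\mu_5)$; the tail terms $u_5^*(n)$ with larger $n$ must be shown not to contribute, exactly as implicitly in Theorem~\ref{thm:sqofssp123}.

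The main obstacle is twofold. First, the whole argument is conditional on the Heun‑polynomial representation conjectured above, which is itself unproven; a self‑contained proof would have to establish that representation first, presumably by matching the Heun differential operator annihilating the relevant $q$‑expansion against the algebraic relation defining $j_N^*(\tau)$. Second, and independently of the first, controlling the reduction modulo $p$ is delicate: unlike Clausen's formula~(\ref{eq:clausensformula}) there is no general squaring formula for Heun functions, so the identity of the second paragraph is special to these sporadic parameter values, and the vanishing needed in the last step amounts to a supercongruence $u_5^*(n)\equiv0\pmod p$ for $n$ beyond the truncation point (with the analogous statement for $u_7^*$), which would have to be proven directly for these Apéry‑like sequences.
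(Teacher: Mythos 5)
First, note that the statement you are proving is presented in the paper as a \emph{conjecture}: the paper offers no proof, only the supporting evidence that you have in fact faithfully reconstructed, namely Sakai's Heun expressions for $\widetilde{E_{4,5}}$ and $\widetilde{E_{4,7}}$ combined with Cooper's expansions in $1/j_5^*(\tau)$ and $1/j_7^*(\tau)$, in analogy with the passage from Theorem~\ref{thm:DEU} to Theorem~\ref{thm:sqofssp123} via Clausen's formula~(\ref{eq:clausensformula}). Your derivation of the exact series identity $Hl\bigl(-\phi^{10},-\tfrac{3\phi^5}{4};\tfrac14,\tfrac34,1,\tfrac12;\tfrac{4\phi^5}{X}\bigr)^2=\sum_{n\ge0}u_5^*(n)X^{-n}$ by transitivity through the weight-two form is correct and is exactly the heuristic the paper points to, and you rightly flag that the whole scheme is conditional on the (itself conjectural) Heun representation of $ss_p^{(5*)}$, $ss_p^{(7*)}$, and that the truncation step requires unproven supercongruences $u_N^*(n)\equiv0\pmod p$ in the tail range. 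So at best you have a conditional reduction, which is consistent with the statement's open status.

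There is, however, a concrete error in the one step you treat as unproblematic: the mod-$p$ parameter matching. Your dichotomy by $\mu_5$ alone is insufficient, because the reduction of $-m_5$ modulo $p$ also depends on $\left(\frac{-1}{p}\right)$. From $m_5=\frac{p-1}{4}+\frac14\bigl(1-\left(\frac{-1}{p}\right)\bigr)-\mu_5$ one gets the reduced parameter pair
\[
\left(-m_5,\;\mu_5+\tfrac12+\tfrac14\left(\tfrac{-1}{p}\right)\right)\equiv
\begin{cases}
\bigl(\mu_5+\tfrac14,\;\mu_5+\tfrac34\bigr) \pmod{p} & \text{if $p\equiv1\pmod4$,}\\[2pt]
\bigl(\mu_5-\tfrac14,\;\mu_5+\tfrac14\bigr) \pmod{p} & \text{if $p\equiv3\pmod4$,}
\end{cases}
\]
so only for $p\equiv1\pmod4$ do you land on Sakai's $(\tfrac14,\tfrac34,1,\tfrac12)$ (when $\mu_5=0$) or on its image $(\tfrac54,\tfrac74,1,\tfrac32)$ under~(\ref{eq:algrelheun}) (when $\mu_5=1$). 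For $p\equiv3\pmod4$ — already at $p=7$, where $\mu_5=0$ but the pair is $(-\tfrac14,\tfrac14)$ — the parameters match neither, the local exponent $\alpha+\beta+1-\gamma-\delta$ even flips sign, and the two-term recursion defining the coefficients is genuinely different mod $p$; hence your assertion that ``the coefficients of $Hl_5(X)$ coincide mod $p$ with those of the true Heun series up to degree $m_5$'' fails as stated. The shifted pairs correspond, among Maier's 192 solutions, to half-prefactor transformations (of type $(1-x)^{1/2}$ or $(1-x/a)^{1/2}$) whose squares introduce extra rational factors such as $\bigl(1-\tfrac{4\phi^5}{X}\bigr)^{\pm1}$ that the conjectured right-hand side does not display, and reconciling this modulo $p$ (where moreover $\phi\notin\mathbb{F}_p$ whenever $\left(\frac{5}{p}\right)=-1$) requires an argument you have not supplied. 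The same issue occurs at level $7$: when $\left(\frac{-3}{p}\right)=-1$ the pair reduces to $\bigl(\mu_7-\tfrac13,\,\mu_7+\tfrac13\bigr)$ rather than to Sakai's $(\tfrac13,\tfrac23)$ or its~(\ref{eq:algrelheun})-image. Repairing this case split is a genuine missing piece of your reduction, independent of the two gaps you did acknowledge.
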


By numerical calculation, we expect the relation between the square of supersingular polynomials and the polynomial $R_{N}(X,Y)$ of (\ref{eq:RN}). We define the polynomial $V_{p}^{(N*)}(Y)$ by
\begin{equation}
V_{p}^{(N*)}(Y)  = \text{Resultant}_{X}[ ss_{p}(X), R_{N}(X,Y) ].  \label{eq:Vp} 
\end{equation}
We defined the supersingular polynomial $ss_{p}^{(N*)}(Y)$ by ignoring the multiple roots of the right-hand side of (\ref{eq:Vp}).
The following conjecture suggests that the multiplicity is almost equal to~2 (see eq.(\ref{eq:resultant37})).
\begin{conj} For a prime $p\geq 5$ and $p \not= N$, the polynomial $V_{p}^{(N*)}$ satisfies the following:
\begin{align*}
&{} Y^{\varepsilon } (Y-256)^{\nu } V_{p}^{(2*)}(Y) \\
&= (Y+144)^{2\delta } (Y-648)^{\varepsilon } ss_{p}^{(2*)}(Y)^{2} \pmod{p}, \\
&{} Y^{\delta } (Y-108)^{\delta } V_{p}^{(3*)}(Y) \\
&= (Y+192)^{2\delta } (Y^{2} -576 Y -1728)^{\varepsilon } ss_{p}^{(3*)}(Y)^{2} \pmod{p}, \\
&{} (Y^{2} -44 Y -16)^{\mu_{5}}  V_{p}^{(5*)}(Y) \\
&= (Y^{2} + 216 Y +144)^{2\delta }(Y^{2} -540 Y -6480)^{\varepsilon } ss_{p}^{(5*)}(Y)^{2} \pmod{p}, \\
&{} (Y+1)^{\mu_{7}} (Y-27)^{\mu_{7}} V_{p}^{(7*)}(Y) \\
&= (Y^{2} + 224 Y +448)^{2\delta } \\
&{} \quad \times (Y^{4} - 528 Y^{3} -9024 Y^{2} -5120 Y -1728)^{\varepsilon } ss_{p}^{(7*)}(Y)^{2} \pmod{p}.
\end{align*}
\end{conj}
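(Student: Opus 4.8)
The plan is to convert the resultant in (\ref{eq:Vp}) into a product over supersingular $j$-invariants and then exploit the fact that an $N$-isogeny preserves supersingularity. Since $R_{N}(X,Y)=X^{2}-a_{N}(Y)X+b_{N}(Y)$ from (\ref{eq:RN}) is monic of degree $2$ in $X$ and $ss_{p}(X)$ is monic, the resultant factors as
\[ V_{p}^{(N*)}(Y)=\prod_{ss_{p}(\alpha)=0}R_{N}(\alpha,Y)=ss_{p}(\beta_{1}(Y))\,ss_{p}(\beta_{2}(Y)), \]
where $\beta_{1}(Y),\beta_{2}(Y)$ are the two roots of $R_{N}(X,Y)=0$, i.e. the $j$-invariants $j(\tau),j(N\tau)$ lying over $j_{N}^{*}=Y$. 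These invariants belong to $N$-isogenous curves, so $\beta_{1}(Y_{0})$ is supersingular if and only if $\beta_{2}(Y_{0})$ is; hence $V_{p}^{(N*)}(Y_{0})=0\iff Y_{0}\in S_{N}^{*}$, and the zero locus of $V_{p}^{(N*)}$ already coincides with that of $ss_{p}^{(N*)}$. All the content of the conjecture therefore lies in matching multiplicities.

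First I would dispose of the generic points: if $Y_{0}\in S_{N}^{*}$ with $\beta_{1}(Y_{0})\neq\beta_{2}(Y_{0})$ both simple supersingular values, and the double cover $Y\mapsto\{\beta_{1},\beta_{2}\}$ unramified there, then each factor $ss_{p}(\beta_{i}(Y))$ vanishes to order $1$ and $V_{p}^{(N*)}$ to order exactly $2$, reproducing $ss_{p}^{(N*)}(Y)^{2}$. This localizes the problem to the finite set of exceptional $Y_{0}$, which fall into two geometric families: the branch points of the double cover (the fixed points of the Atkin--Lehner involution $w_{N}$, where $\beta_{1}=\beta_{2}$ and the two analytic branches merge as a square root), and the preimages of the two elliptic values $j=0,1728$, over which the degree-$(N+1)$ map $j\colon X_{0}(N)\to X(1)$ ramifies.

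At each exceptional $Y_{0}$ the local order of $V_{p}^{(N*)}=ss_{p}(\beta_{1})ss_{p}(\beta_{2})$ is read off by combining three inputs: the ramification index of $j$ at $\beta_{i}(Y_{0})$ (namely $3$ over $j=0$, $2$ over $j=1728$, $1$ elsewhere), whether $Y_{0}$ is a $w_{N}$-branch point (which halves the contributing exponent through the square-root behaviour), and the multiplicity with which $\beta_{i}(Y_{0})$ occurs in $ss_{p}$, which by Theorem \ref{thm:DEU} is $\delta$ at $j=0$, $\varepsilon$ at $j=1728$, and $1$ otherwise. Subtracting the naive exponent $2$ at each $Y_{0}$ produces precisely the correction factors: the radical of the cube part of $b_{N}(Y)$ in (\ref{eq:RN}) (which cuts out the $j=0$ fibre, and is a cube exactly because $j=0$ is an order-$3$ point) enters to the power $2\delta$; the fibre over $j=1728$ enters to the power $\varepsilon$; and the $w_{N}$-fixed points carrying a supersingular $j$ give the factors multiplying $V_{p}^{(N*)}$ on the left. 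For $N=2$ one checks, for example, that $Y=-144$ (over $j=0$) yields local order $4\delta$ against the naive $2\delta$, forcing $(Y+144)^{2\delta}$, while the two preimages $Y=0$ and $Y=648$ of $j=1728$ acquire the \emph{asymmetric} $Y^{\varepsilon}$ and $(Y-648)^{\varepsilon}$ according to whether they are the merging ($w_{2}$-fixed) or the merely $j$-ramified branch. The global identity $\deg_{Y}V_{p}^{(N*)}=(N+1)\deg ss_{p}$ balanced against $2\deg ss_{p}^{(N*)}$ plus the correction degrees is a useful check that no exceptional factor has been overlooked.

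The hard part is making this local bookkeeping precise and, above all, uniform in $N$. For $N=2,3$ one can short-circuit most of it using the explicit algebraic relation of Proposition \ref{prop:algrelofssp} together with the closed forms of $ss_{p}^{(N*)}(X)^{2}$ in Theorem \ref{thm:sqofssp123}, reducing the claim to an identity of hypergeometric polynomials provable by the methods of Section \ref{Proof of main theorems}. For $N=5,7$ no hypergeometric reduction exists — only the conjectural Heun representation — and the exceptional points are no longer rational: they are the quadratic factors $Y^{2}+216Y+144$, $Y^{2}-540Y-6480$ (for $N=5$) and their level-$7$ analogues, which are exactly the fibres of $R_{5},R_{7}$ over $j=0,1728$ and the $w_{N}$-fixed loci. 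Establishing the correct multiplicities at these Galois-conjugate points, with no polynomial formula for $ss_{p}^{(N*)}$ to anchor the computation, and while simultaneously controlling the \emph{a priori} possibility that some supersingular $j_{N}^{*}$ escapes $\mathbb{F}_{p^{2}}$, is the genuine obstruction, and is what keeps the statement a conjecture rather than a theorem.
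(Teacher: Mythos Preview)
The paper offers no proof of this statement: it is explicitly labelled a \emph{Conjecture}, motivated by numerical experiment, with only a brief remark afterwards identifying the factors on the right-hand side as coming from $R_{N}(0,Y)$ and $R_{N}(1728,Y)$. So there is nothing to compare your argument against; what you have written is not a reconstruction of the paper's proof but an independent attempt to supply one.

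As a strategy your outline is sound and goes well beyond what the paper says. The factorisation $V_{p}^{(N*)}(Y)=ss_{p}(\beta_{1}(Y))\,ss_{p}(\beta_{2}(Y))$ is correct, the reduction of the problem to a finite list of exceptional $Y_{0}$ (the $w_{N}$-fixed locus and the fibres over $j=0,1728$) is the right picture, and your reading of the exponents $2\delta$, $\varepsilon$, $\nu$, $\mu_{5}$, $\mu_{7}$ from the ramification data matches the paper's remark about $R_{N}(0,Y)$ and $R_{N}(1728,Y)$. You also correctly flag, for $N=5,7$, both the absence of a closed form for $ss_{p}^{(N*)}$ and the open Question about containment of $S_{N}^{*}$ in $\mathbb{F}_{p^{2}}$ as the real obstructions.

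Where your proposal falls short of a proof is in the $N=2,3$ cases. You assert that Proposition~\ref{prop:algrelofssp} and Theorem~\ref{thm:sqofssp123} ``reduce the claim to an identity of hypergeometric polynomials provable by the methods of Section~\ref{Proof of main theorems},'' but you do not exhibit that identity or indicate which transformation carries it. Proposition~\ref{prop:algrelofssp} relates $ss_{p}^{(N*)}$ to $ss_{p}^{(N)}$, not to $ss_{p}$; bridging to the resultant $V_{p}^{(N*)}$, which is built from $ss_{p}$ and $R_{N}$, requires composing with the $j\leftrightarrow j_{N}$ relation (\ref{eq:algreljjn}) and tracking multiplicities through that further cover. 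That bookkeeping is exactly the ``hard part'' you name, and it is not carried out. So even for $N=2,3$ what you have is a credible plan, not a proof --- which is consistent with the paper's decision to leave the statement as a conjecture.
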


\begin{rem}
The coefficients of right hand sides of above conjecture derived from $R_{N}(0,Y)$ and $R_{N}(1728,Y)$. For example,
\begin{align*}
&{} R_{7}(0,Y) = Y^2 (Y^2+224 Y+448)^{3} , \\
&{} R_{7}(1728,Y) = (Y^{4} - 528 Y^{3} -9024 Y^{2} -5120 Y -1728)^{2} . 
\end{align*}
\end{rem}


\subsection{level $N \in \mathfrak{S}-\{2,3,5,7 \}$}
Calculations on Mathematica suggest the following conjectures.
If $N \in \mathfrak{S}$ and $p$ is prime, the Kronecker symbol $\left( \tfrac{\cdot}{Np}\right) $ simply means the product of the Legendre symbol $\left( \frac{\cdot}{N}\right) $ and $\left( \frac{\cdot}{p}\right) $.
\begin{conj}
Let $p \geq 5$ be a prime number and $N \in \mathfrak{S}- \{2\}$ and $N \not= p$. Then
\begin{align*}
L^{(N*)}(p) &= \frac{1}{2} \left( 1 +\left( \frac{-p}{N}\right) \right) L(p) \\
&{}\quad +\frac{1}{8} \left\{ 2+ \left( 1 - \left( \frac{-1}{Np}\right) \right) \left( 2 + \left( \frac{-2}{Np}\right) \right) \right\} h(\sqrt{-N p}).
\end{align*}
\end{conj}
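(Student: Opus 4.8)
The plan is to bypass the explicit hypergeometric/Heun factorizations that drove the proofs for $N\le 3$---these are unavailable (indeed only conjectural) for $N\ge 5$---and instead to count the $\mathbb{F}_p$-rational supersingular points of $X_0(N)^+ = X_0(N)/\langle w_N\rangle$ directly, through the arithmetic of supersingular points. By the definition of $R_N$ in (\ref{eq:RN}), a value $j_N^*\in S_N^*$ is the image under the Hauptmodul of a $w_N$-orbit $\{(E,C),\,(E/C,E[N]/C)\}$, where $E$ is supersingular over $\overline{\mathbb{F}}_p$ and $C\subset E$ is cyclic of order $N$; these orbits are exactly the supersingular points of $X_0(N)^+$. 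First I would prove that $j_N^*\in\mathbb{F}_p$ if and only if this orbit is stable under the $p$-power Frobenius $\phi\colon (E,C)\mapsto (E^{(p)},C^{(p)})$, so that
\[
L^{(N*)}(p)=\#\{\,\phi\text{-stable }w_N\text{-orbits of supersingular }(E,C)\,\}.
\]
A $\phi$-stable orbit is of exactly one of two kinds: \emph{Type I}, where $\phi$ fixes each point, $(E^{(p)},C^{(p)})\cong (E,C)$; and \emph{Type II}, where $\phi$ interchanges the two points, $(E^{(p)},C^{(p)})\cong (E/C,E[N]/C)$. Counting these contributions separately should produce the two summands of the conjecture.

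For Type I the underlying curve has $j(E)^p=j(E)$, hence $E$ is one of the $L(p)$ supersingular curves with $j(E)\in\mathbb{F}_p$. By Deuring's theory such an $E$ has Frobenius $\pi=\phi$ with $\pi^2=-p$, so $\phi$ acts on $E[N]$ with characteristic polynomial $x^2+p$; the number of Frobenius-stable cyclic $N$-subgroups is therefore $1+\bigl(\tfrac{-p}{N}\bigr)$ (two when $-p$ is a square mod $N$, none otherwise, $N$ being unramified in $\mathbb{Q}(\sqrt{-p})$ for $N\ne 2,p$). Summing over the $L(p)$ curves and dividing by the degree $2$ of the quotient map $X_0(N)\to X_0(N)^+$ gives $\tfrac12\bigl(1+\bigl(\tfrac{-p}{N}\bigr)\bigr)L(p)$, the first term. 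The reduction to Theorem~\ref{thm:NofLF} at $N=3$, via the identity $\bigl(\tfrac{-p}{3}\bigr)=-\bigl(\tfrac{-3}{p}\bigr)$, is a useful consistency check.

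For Type II the relation $E^{(p)}\cong E/C$ exhibits a cyclic $N$-isogeny $E\to E^{(p)}$, so the twisted map $\phi\,w_N$ fixes the point; its fixed supersingular points are precisely the Type II orbits. The crucial observation is that fixing by $\phi\,w_N$ forces the existence of an endomorphism of reduced norm $Np$, hence CM by an order in $\mathbb{Q}(\sqrt{-Np})$, and that the optimal-embedding count which yields $L(p)$ in Theorem~\ref{thm:NofLFofssp} goes through verbatim with the Frobenius replaced by $\phi\,w_N$ and the discriminant $-p$ replaced by $-Np$. I would replay that count for the maximal orders of the quaternion algebra $B_{p,\infty}$ ramified at $p$ and $\infty$, and match the local factors---the overall $\tfrac18$ together with the contributions of the distinguished small-discriminant CM points (those with CM by $\mathbb{Z}[i]$ and by $\mathbb{Z}[\sqrt{-2}]$, now governed by $\bigl(\tfrac{-1}{Np}\bigr)$ and $\bigl(\tfrac{-2}{Np}\bigr)$)---to obtain the coefficient $\tfrac18\bigl\{2+\bigl(1-\bigl(\tfrac{-1}{Np}\bigr)\bigr)\bigl(2+\bigl(\tfrac{-2}{Np}\bigr)\bigr)\bigr\}h(\sqrt{-Np})$. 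This Type II count is exactly of the type of Aricheta's theorem on supersingular points \emph{not} defined over $\mathbb{F}_p$ (cited in the introduction), and I expect his methods to transfer directly.

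The hard part will be the exact bookkeeping of the local constants, not the qualitative dichotomy. Three difficulties stand out. First, the elliptic points $j=0$ and $j=1728$ and the ramified primes dividing $2Np$ perturb the naive orbit counts; these are precisely the perturbations that the delicate factor $2+(1-(\tfrac{-1}{Np}))(2+(\tfrac{-2}{Np}))$ is built to absorb, and getting them right uniformly in $N$ is the crux. Second, one must show the Type I and Type II contributions are genuinely disjoint---no orbit is simultaneously $\phi$-fixed and $\phi\,w_N$-fixed outside controlled CM configurations---and that the quotient map $X_0(N)\to X_0(N)^+$ is $2$-to-$1$ away from its (accountable) ramification, so that the two terms add without overlap or undercounting. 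Third, because optimal-embedding formulas are cleanest away from finitely many small or ramified primes, an unconditional proof would still require direct verification at the residual primes, exactly as was done for $\mathbb{B}$ and $Fi^{\prime}_{24}$ in the proof of Theorem~\ref{thm:relofspgp}. Establishing these constants simultaneously for all $N\in\mathfrak{S}-\{2\}$ is, I believe, the essential obstacle that currently keeps the statement conjectural.
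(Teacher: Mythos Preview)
The statement you are attacking is labeled \emph{Conjecture} in the paper; no proof is offered there. The only cases the paper actually proves are $N=2,3$ (Theorem~\ref{thm:NofLF}), and those proofs go through a completely different mechanism: hypergeometric identities relating $ss_p^{(N*)}$ to $ss_p^{(N)}$ (Proposition~\ref{prop:algrelofssp}), then to the Legendre polynomials $P_m$ via $W_m$, and finally an appeal to the Brillhart--Morton counts of Theorems~\ref{thm:numoflinw} and~\ref{thm:mortonbqf}. That route is unavailable for $N\ge 5$, exactly as you say, and the paper makes no attempt at the general $N$.

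Your strategy---interpreting $L^{(N*)}(p)$ as the number of Frobenius-stable $w_N$-orbits of supersingular points on $X_0(N)$, splitting into Type~I ($\phi$-fixed) and Type~II ($\phi w_N$-fixed), and reducing Type~II to an optimal-embedding count for $\mathbb{Q}(\sqrt{-Np})$---is the natural quaternion-algebra approach in the spirit of Deuring's proof of Theorem~\ref{thm:NofLFofssp}, and your reduction to $\delta\,L(p)$ at $N=3$ is a correct sanity check. The halving that turns the $\tfrac14$ of Theorem~\ref{thm:NofLFofssp} into the $\tfrac18$ in the conjecture is exactly the $2$-to-$1$ passage from $\phi w_N$-fixed points on $X_0(N)$ to Type~II orbits, and you have this right.

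What keeps this a sketch rather than a proof is that the three difficulties you list are not bookkeeping but the whole content. In particular: (i) you must first identify $S_N^*$ as defined in the paper (through the polynomial $R_N$ and resultants) with the set of supersingular points on the mod-$p$ reduction of $X_0(N)^+$---the paper's own Question~1 shows this identification has not been established here; (ii) the Type~I count $\tfrac12(1+(\tfrac{-p}{N}))L(p)$ presumes both that every supersingular $E$ with $j(E)\in\mathbb{F}_p$ carries a Frobenius with $\pi^2=-p$ acting on $E[N]$, and that the resulting $(E,C)$ pair off cleanly under $w_N$---both need care at $j=0,1728$ and at $w_N$-fixed points; (iii) for Type~II you invoke ``the optimal-embedding count \dots\ goes through verbatim,'' but this is precisely Eichler's trace formula applied to the Hecke operator $T_N\phi$ (or equivalently the Atkin--Lehner twist), and matching its local factors to the stated combination of $(\tfrac{-1}{Np})$ and $(\tfrac{-2}{Np})$ is the actual computation to be done. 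In short, your outline is the right shape for a proof and genuinely different from anything in the paper, but the substance lies entirely in the steps you have flagged as ``hard,'' and none of them is carried out.
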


\begin{rem}By the law of quadratic reciprocity, we have
\[ \left( \frac{-p}{N}\right) = \left( \frac{-1}{N}\right) \left( \frac{ \left( \tfrac{-1}{N} \right) N}{p}\right) . \]
\end{rem}

\begin{conj}\label{conj:degssNpstar}
Let $p \geq 5$ be a prime number and $N \in \mathfrak{S}- \{2 \}$ and $N \not= p$. Then the degree of $ss^{(N*)}_{p} (X)$ is given by
\begin{align*}
\deg ss^{(N*)}_{p} (X) &= \frac{(N+1)(p-1)}{24} + \frac{1}{8} \left( 1 + \left( \frac{-1}{N}\right)\right) \left( 1 - \left( \frac{-1}{p}\right) \right) \\
                                          &{} \quad + \frac{1}{6}\left( 1 + \left( \frac{-3}{N}\right)\right) \left( 1 - \left( \frac{-3}{p}\right) \right) +\frac{1}{2} \left( 1 - \left( \frac{-N}{p}\right) \right) \deg  ss_{N}(X).
\end{align*}
\end{conj}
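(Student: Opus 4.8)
The plan is to interpret $\deg ss_p^{(N*)}(X)$ geometrically and then count by orbits. By the definition (\ref{def:sspNstar}) together with (\ref{eq:RN}), the roots of $ss_p^{(N*)}(X)$ are exactly the distinct values $j_N^*$ for which the quadratic $R_N(X,j_N^*) = X^2 - a_N(j_N^*)X + b_N(j_N^*)$ has a supersingular root $j$; since its two roots are $j(\tau)$ and $j(N\tau)$, this says that the corresponding $N$-isogenous pair $(E,E')$ consists of supersingular curves. As $j_N^*$ is a Hauptmodul on $\Gamma_0^*(N) = \langle \Gamma_0(N), w_N\rangle$, a value of $j_N^*$ is the same datum as a point of $X_0(N)^+ = X_0(N)/\langle w_N\rangle$, i.e. a $w_N$-orbit of points of $X_0(N)$; and since $w_N$ interchanges $E$ and $E'$, supersingularity is constant along such an orbit. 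Thus, over $\overline{\mathbb{F}}_p$ (with $p\neq N$, $p\geq 5$),
\[ \deg ss_p^{(N*)}(X) = \#\{\, w_N\text{-orbits of supersingular points of } X_0(N) \,\}. \]

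First I would evaluate this orbit count by Burnside's lemma: it equals $\tfrac12(|SS| + |F|)$, where $SS$ is the set of supersingular points of $X_0(N)$ and $F\subseteq SS$ is the subset fixed by $w_N$. For $|SS|$ I would use the Eichler mass formula for $\Gamma_0(N)$, giving the weighted supersingular mass $\sum_{(E,C)} 1/|\mathrm{Aut}(E,C)| = \tfrac{(N+1)(p-1)}{24}$ for prime $N$. Converting this weighted count to the naive count introduces correction terms $\tfrac12 e_2 + \tfrac23 e_3$, where $e_2$ (resp. $e_3$) is the number of supersingular points of $X_0(N)$ carrying an automorphism of order $4$ (resp. $6$): these are the elliptic points of $\Gamma_0(N)$, of which there are $\nu_2(N) = 1 + (\tfrac{-1}{N})$ and $\nu_3(N) = 1 + (\tfrac{-3}{N})$, supported on $j=1728$ and $j=0$ respectively. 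Such a point is supersingular precisely when its underlying curve is, i.e. when $\varepsilon = 1$ (for $j=1728$) or $\delta = 1$ (for $j=0$) in the notation (\ref{eq:nudeleps}). Feeding $e_2 = \nu_2(N)\varepsilon$ and $e_3 = \nu_3(N)\delta$ into $\tfrac14 e_2 + \tfrac13 e_3$ reproduces exactly the two middle terms $\tfrac18(1+(\tfrac{-1}{N}))(1-(\tfrac{-1}{p}))$ and $\tfrac16(1+(\tfrac{-3}{N}))(1-(\tfrac{-3}{p}))$, while $\tfrac12|SS|$ supplies the main term $\tfrac{(N+1)(p-1)}{24}$.

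It then remains to show that the contribution $\tfrac12|F|$ of the $w_N$-fixed supersingular points equals $\tfrac12(1-(\tfrac{-N}{p}))\deg ss_N(X)$; this is the crux and the main obstacle. A point $(E,C)$ is fixed by $w_N$ exactly when $E\cong E/C$, i.e. when $E$ admits a cyclic self-$N$-isogeny, hence an endomorphism of reduced norm $N$ generating CM by an order of $\mathbb{Q}(\sqrt{-N})$. Over $\overline{\mathbb{F}}_p$ a supersingular curve has $\mathrm{End}^0(E) = B_{p,\infty}$, into which $\mathbb{Q}(\sqrt{-N})$ embeds if and only if $p$ is non-split there, i.e. $(\tfrac{-N}{p}) = -1$; this already forces $|F| = 0$ when $(\tfrac{-N}{p}) = 1$ and explains the vanishing factor. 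When $(\tfrac{-N}{p}) = -1$, I would compute $|F|$ through optimal embeddings of the orders $\mathbb{Z}[\sqrt{-N}]$ and, if $N\equiv 3\pmod 4$, $\mathbb{Z}[\tfrac{1+\sqrt{-N}}{2}]$ into maximal orders of $B_{p,\infty}$ via Eichler's embedding-number formula, which should evaluate $|F|$ as $h(-4N) + h(-N)$ (with $h(-N)=0$ unless $N\equiv 3\pmod 4$).

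Finally I would close the argument with the class-number identity $h(-4N)+h(-N) = 2\deg ss_N(X)$ --- the genuine ``prime--level duality'' --- so that $\tfrac12|F| = \deg ss_N(X)$ when $(\tfrac{-N}{p})=-1$. This identity, which equates fixed-point data for $w_N$ in characteristic $p$ with the supersingular count in characteristic $N$ computed from Theorem \ref{thm:DEU}, can be checked directly for small $N$ (for instance $N=5,7,11$) and should follow from Deuring's reduction theory together with the Hurwitz--Eichler class-number relations; establishing it uniformly, and verifying that distinct fixed CM points stay distinct modulo $p$ so that the embedding count really equals the number of geometric fixed points, is where I expect the real difficulty to lie. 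Assembling the four contributions then yields the asserted value of $\deg ss_p^{(N*)}(X)$.
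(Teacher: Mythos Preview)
The paper does not prove this statement: it is presented as Conjecture~\ref{conj:degssNpstar} in Section~\ref{Conjectures and Observations}, with only the remark afterwards that the case $N=3$ is true. There is therefore no argument in the paper against which to compare your attempt; you are proposing a proof of an open conjecture.

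That said, your outline is the natural geometric route and the pieces you assemble are correct. The Burnside split $\deg ss_p^{(N*)} = \tfrac12(|SS|+|F|)$, the mass-formula evaluation of $|SS|$ with the elliptic-point corrections $e_2=\nu_2(N)\,\varepsilon$ and $e_3=\nu_3(N)\,\delta$, and the vanishing $|F|=0$ when $\mathbb{Q}(\sqrt{-N})$ splits at $p$ all check out and reproduce the first three terms of the formula exactly. For the last term you can shortcut the class-number identity you flag as the crux: since $N\in\mathfrak{S}$ means $g(X_0(N)^+)=0$, Riemann--Hurwitz for the double cover $X_0(N)\to X_0(N)^+$ gives $|\mathrm{Fix}(w_N)|_{\mathbb{C}} = 2g(X_0(N))+2 = 2\deg ss_N$ directly, using $\deg ss_N = 1+g(X_0(N))$ as the paper records. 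What genuinely remains is the passage from characteristic~$0$ to characteristic~$p$: showing that when $\bigl(\tfrac{-N}{p}\bigr)=-1$ the supersingular $w_N$-fixed locus over $\overline{\mathbb{F}}_p$ has the same cardinality as $\mathrm{Fix}(w_N)$ over $\mathbb{C}$. This is exactly Eichler's optimal-embedding count for the orders $\mathbb{Z}[\sqrt{-N}]$ and (when $N\equiv 3\pmod 4$) $\mathbb{Z}\bigl[\tfrac{1+\sqrt{-N}}{2}\bigr]$ into maximal orders of $B_{p,\infty}$, and the care you anticipate is in matching normalisations (embeddings up to unit conjugation versus geometric fixed points, and the local factor $1-\bigl(\tfrac{-N}{p}\bigr)$). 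With that step made precise, your argument would settle the conjecture.
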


The number $\deg ss_{p}(X)$ has several interpretation. For $p \geq 5$,
\begin{align*}
\deg ss_{p} (X) &= 1+\dim S_{p+1} (SL_{2}(\mathbb{Z})) \\
                           &= 1+\dim S_{2} (\Gamma _{0}(p)) \\
                           &= 1+ g(X_{0}(p)) \\
                           &= h_{D_{p}},
\end{align*}
where $g(X_{0}(p))$ is the genus of the modular curve $X_{0}(p)$ and $h_{D_{p}}$ is the class number of the definite quaternion algebra $D_{p}$ (see \cite{deuring1941die,deuring1944die,eichler1938uber,igusa1958class}).
We focus on the first line of the above equality. By the dimension formula of $S_{p+1} (\Gamma_{0}^{*}(N))$ in \cite[Lemma 2.2]{choi2013basis}, we have
\[ \deg ss^{(2*)}_{p} (X) = 1 +\dim S_{p+1} (\Gamma_{0}^{*}(2)) \]
and under the above Conjecture \ref{conj:degssNpstar},
\begin{equation}
\deg ss_{p}^{(N*)}(X) = 1 +\dim S_{p+1} (\Gamma_{0}^{*}(N)) + \frac{1}{2} \left( \left( \frac{-1}{p}\right) - \left( \frac{-N}{p}\right)\right) \deg  ss_{N}(X) 
\end{equation}
for $N \in \mathfrak{S}- \{2 \}$ (the case of $N=3$ is true).


\subsection{Other class}
In this section, we considered the supersingular polynomial for the group 
\[ \Gamma_{0}(3|3) = \left\{ \begin{pmatrix} a & b/3 \\ 3c & d \end{pmatrix} \, \middle| \,  \begin{pmatrix} a & b \\ c & d \end{pmatrix} \in SL_{2}(\mathbb{Z}) \right\}.  \]
This group is labeled ``3C'' in Table 2 of \cite{conway1979monstrous} \footnote{Groups $SL_{2}(\mathbb{Z}), \Gamma_{0}^{*}(2), \Gamma_{0}(2), \Gamma_{0}^{*}(3),$ and $\Gamma_{0}(3)$ correspond to labels 1A, 2A, 2B, 3A, and 3B, respectively.} and corresponding modular function $j_{3C}(\tau ) = q^{-1} +248 q^2 + 4124 q^5 +\cdots $ satisfies
\[ j_{3C}(\tau )^{3} = j(3\tau ) = \frac{j_{3}(\tau ) (j_{3}(\tau ) -24)^{3}}{j_{3}(\tau ) -27 }.  \]
Since algebraic relation (\ref{eq:algreljjn}), we can also find the algebraic relation of $j(\tau )$ and $j_{3C}(\tau )$:
\begin{align*}
R_{3 C}(X,Y) &= X^4 -X^3 (Y^9-2232 Y^6+1069956 Y^3-36864000) + \cdots \\
                       &{} \quad  + Y^3 (Y^3+12288000)^3 .
\end{align*}
By the definition, this polynomial satisfies $R(j(\tau ), j_{3C}(\tau ) ) = \Phi_{3} (j(\tau ), j(3\tau ))  = 0$, where $\Phi_{3}(X,Y)$ is the classical modular polynomial of level 3.
We use $R_{3 C}(X,Y)$ to define the supersingular polynomial $ ss_{p}^{(3C)}(X)$ as in (\ref{def:sspNstar}),
then the degree and the number $L^{(3C)}(p)$ of linear factors of $ ss_{p}^{(3C)}(X)$ are conjectured as below.
\begin{conj}
For a prime number $p\geq 5$,
\[  \deg ss_{p}^{(3C)}(X) = \frac{p-1}{4} +\frac{3}{2} \varepsilon , \quad L^{(3C)}(p) = \left( 2 + \left( \frac{-3}{p}\right) \right) L(p). \]
where $L(p)$ is the number of  linear factors of $ss_{p}(X)$.
\end{conj}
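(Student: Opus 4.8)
The plan is to reduce the whole statement to a single clean polynomial identity, after which the degree is immediate and the linear‑factor count collapses to one arithmetic assertion about cubic residues. Since $j_{3C}(\tau)^3 = j(3\tau)$, the defining relation $R_{3C}(j(\tau),j_{3C}(\tau)) = \Phi_3(j(\tau),j(3\tau))$ becomes, as polynomials monic of degree $4$ in $X$, the identity $R_{3C}(X,Y) = \Phi_3(X,Y^3)$ (consistent with the displayed leading and constant‑in‑$X$ terms of $R_{3C}$). Thus for fixed $Y$ the roots $X$ of $R_{3C}(X,Y)=0$ are exactly the $j$‑invariants $3$‑isogenous to the curve of invariant $Y^3$, and because supersingularity is an isogeny invariant, such an $X$ is supersingular if and only if $Y^3$ is a supersingular $j$‑invariant. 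Hence
\[ S_{3C}^* = \{\, Y \in \overline{\mathbb{F}}_p \mid Y^3 \text{ is a supersingular } j\text{-invariant}\,\}, \]
and, since $p\ge 5$ makes $X^3-a$ separable, grouping the three distinct cube roots of each nonzero supersingular value (and the single root $Y=0$ of $Y^3=0$, which occurs exactly when $\delta=1$) yields the factorization
\[ ss_p^{(3C)}(X) = \frac{ss_p(X^3)}{X^{2\delta}} \pmod{p}. \]

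From this the degree is purely formal: using $\deg ss_p = m+\delta+\varepsilon$ together with $p-1 = 12m+4\delta+6\varepsilon$, I would compute
\[ \deg ss_p^{(3C)}(X) = 3\deg ss_p - 2\delta = 3m+\delta+3\varepsilon = \frac{p-1}{4} + \frac{3}{2}\varepsilon. \]

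For the linear factors I would count, for each supersingular $a\in\mathbb{F}_p$, the number of $Y\in\mathbb{F}_p$ with $Y^3=a$; only the $L(p)$ supersingular invariants already in $\mathbb{F}_p$ can contribute, since $Y\in\mathbb{F}_p$ forces $Y^3\in\mathbb{F}_p$. When $p\equiv 2\pmod 3$, i.e. $\left(\tfrac{-3}{p}\right)=-1$, the cube map is a bijection of $\mathbb{F}_p$, so each rational supersingular invariant contributes exactly one root and $L^{(3C)}(p) = L(p) = \left(2+\left(\tfrac{-3}{p}\right)\right)L(p)$. When $p\equiv 1\pmod 3$, i.e. $\left(\tfrac{-3}{p}\right)=+1$, we have $\delta=0$, so $j=0$ is ordinary and every rational supersingular invariant $a$ is nonzero; such an $a$ contributes three roots if it is a cube in $\mathbb{F}_p^*$ and none otherwise. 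Thus $L^{(3C)}(p)=3L(p)$ would follow precisely if \emph{every} supersingular $j$‑invariant lying in $\mathbb{F}_p$ is a cubic residue.

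This residue statement is the main obstacle, and is presumably why the assertion is only a conjecture. There is no shortcut through character sums: writing $\chi_3$ for a nontrivial cubic character of $\mathbb{F}_p^*$, one has $L^{(3C)}(p) = L(p) + \sum_{a}\bigl(\chi_3(a)+\overline{\chi_3}(a)\bigr)$ over rational supersingular $a$, and this equals $3L(p)$ exactly when $\chi_3(a)=1$ for all such $a$. I would attack it via the $\mathbb{F}_{p^2}$‑rationality of the supersingular points of the curve carrying $j_{3C}$ (the theme of the cited work of Aricheta and of the open Question of this paper): if every supersingular $j_{3C}$‑value lay in $\mathbb{F}_{p^2}$, then a non‑cube $a\in\mathbb{F}_p^*$ would push its cube roots into $\mathbb{F}_{p^3}\setminus\mathbb{F}_p$, contradicting $\mathbb{F}_{p^2}\cap\mathbb{F}_{p^3}=\mathbb{F}_p$; hence every rational supersingular invariant would be a cube. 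An alternative is the Deuring description of these invariants as reductions of singular moduli for $\mathbb{Q}(\sqrt{-p})$ combined with cubic reciprocity in the associated ring class field. In either approach, controlling the cubic character of \emph{every} rational supersingular invariant uniformly is the delicate, and apparently still open, step.
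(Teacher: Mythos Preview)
This statement is listed in the paper as a \emph{conjecture}; the paper offers no proof, so there is nothing there to compare your argument against. What you have supplied goes well beyond the paper.

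Your central identity $R_{3C}(X,Y)=\Phi_3(X,Y^3)$ is correct: both sides are monic of degree~$4$ in $X$ and vanish identically on the curve $(j(\tau),j_{3C}(\tau))$, and the displayed $X^3$-coefficient and constant term of $R_{3C}$ in the paper match those of $\Phi_3(X,Y^3)$ exactly. From this the description $S_{3C}^*=\{Y:Y^3\text{ is supersingular}\}$ and the factorisation $ss_p^{(3C)}(X)=ss_p(X^3)/X^{2\delta}$ follow as you say, and the degree computation $3(m+\delta+\varepsilon)-2\delta=\tfrac{p-1}{4}+\tfrac{3}{2}\varepsilon$ is clean and correct. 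So the first assertion of the conjecture is in fact \emph{proved} by your argument, as is the formula for $L^{(3C)}(p)$ when $p\equiv 2\pmod 3$.

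You have also located the genuine obstruction precisely: for $p\equiv 1\pmod 3$ the equality $L^{(3C)}(p)=3L(p)$ is equivalent to the statement that every supersingular $j$-invariant lying in $\mathbb{F}_p$ is a cube in $\mathbb{F}_p^\times$. Your reduction of this to the $\mathbb{F}_{p^2}$-rationality of supersingular $j_{3C}$-values (the analogue of the paper's open Question for this class) is valid, since a non-cube in $\mathbb{F}_p^\times$ has all its cube roots in $\mathbb{F}_{p^3}\setminus\mathbb{F}_p$ when $\mathbb{F}_p$ contains the cube roots of unity. Neither this residuacity statement nor the corresponding $\mathbb{F}_{p^2}$-rationality is established anywhere in the paper, so your assessment that this is the open step---and the reason the assertion remains a conjecture---is accurate.
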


Let $Th$ be the Thompson group that is one of the sporadic groups. The order of this group is given by
\[ \# Th = 90745943887872000 = 2^{15}\cdot 3^{10}\cdot 5^3\cdot 7^2\cdot 13\cdot 19\cdot 31. \]
The following conjecture is an analogue of Theorem \ref{thm:NofLFofLevel1ssp} for the group $Th$.
\begin{conj}
 For a prime number $p$,
\[ \deg ss^{(3C)}_{p}(X) = L^{(3C)}(p) \iff p\mid \# Th . \]
\end{conj}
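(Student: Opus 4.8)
The plan is to follow exactly the route used in the proof of Theorem \ref{thm:relofspgp}, reducing the biconditional to an explicit class-number estimate together with a finite computer check, and taking as input the two formulas from the preceding conjecture, namely $\deg ss_p^{(3C)}(X) = \frac{p-1}{4} + \frac{3}{2}\varepsilon$ and $L^{(3C)}(p) = (2 + (\frac{-3}{p}))\,L(p)$. First I would bound $L^{(3C)}(p)$ from above in terms of $h(\sqrt{-p})$: by Theorem \ref{thm:NofLFofssp} one has $L(p) \le 2h(\sqrt{-p})$ in all residue classes, and since $2 + (\frac{-3}{p}) \le 3$, this yields $L^{(3C)}(p) \le 6\,h(\sqrt{-p})$.

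Next I would insert the class number bound $h(\sqrt{-p}) < \frac{2\sqrt{p}}{\pi}\log(4p)$ already exploited in the proof of Theorem \ref{thm:relofspgp} (which comes from $|L(1,\chi_D)| < \log|D|$). This gives
\[
\deg ss_p^{(3C)}(X) - L^{(3C)}(p) \ge \frac{p-1}{4} - 6\,h(\sqrt{-p}) > \frac{p-1}{4} - \frac{12\sqrt{p}}{\pi}\log(4p),
\]
and the right-hand side is positive as soon as $\sqrt{p}$ exceeds a fixed constant times $\log(4p)$, i.e. for every $p$ beyond some explicit threshold $p_0$ (of order a few times $10^4$). For such $p$ one has the strict inequality $\deg ss_p^{(3C)}(X) > L^{(3C)}(p)$, so the left side of the biconditional fails; at the same time $p > 31$ exceeds every prime divisor of $\# Th$, so the right side fails as well, and the biconditional holds vacuously for all $p \ge p_0$.

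The remaining primes $p < p_0$ are finite in number, and the prime divisors $\{2,3,5,7,13,19,31\}$ of $\# Th$ all lie below $p_0$. For these I would verify the identity directly in Mathematica, computing $ss_p^{(3C)}(X)$ as the squarefree part of the resultant $\mathrm{Resultant}_X[ss_p(X), R_{3C}(X,Y)]$ as in (\ref{def:sspNstar}), then counting its degree and its linear factors over $\mathbb{F}_p$, and confirming that $\deg ss_p^{(3C)}(X) = L^{(3C)}(p)$ holds precisely for $p \in \{2,3,5,7,13,19,31\}$ and fails for all other $p < p_0$. This completes the biconditional.

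The main obstacle is not this estimate-plus-enumeration argument, which is routine once the two formulas are in hand, but rather establishing those two formulas rigorously. The degree formula should follow from a genus or dimension computation for $\Gamma_0(3|3)$ analogous to the level-$N$ cases. The genuinely hard input is the class-number identity $L^{(3C)}(p) = (2 + (\frac{-3}{p}))\,L(p)$: proving it requires an analogue of the Deuring--Brillhart--Morton analysis adapted to the degree-four correspondence encoded by $R_{3C}(X,Y)$ (equivalently by the classical modular polynomial $\Phi_3$), tracking how supersingular $j$-invariants split and coincide under this correspondence and relating the resulting $\mathbb{F}_p$-rational $j_{3C}$-values to the splitting of primes in $\mathbb{Q}(\sqrt{-p})$ and $\mathbb{Q}(\sqrt{-3})$. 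Turning the numerically observed factor $2 + (\frac{-3}{p})$ into a proven statement is where the real difficulty lies, and until it is resolved the present argument only upgrades the preceding conjecture to the desired biconditional, rather than proving it unconditionally.
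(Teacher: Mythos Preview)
The paper gives no proof of this statement: it is stated as a conjecture, with no argument offered. So there is nothing to compare your attempt against.

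What you have written is a correct conditional reduction. Assuming the preceding conjecture (the formulas $\deg ss_p^{(3C)}(X)=\tfrac{p-1}{4}+\tfrac{3}{2}\varepsilon$ and $L^{(3C)}(p)=(2+(\tfrac{-3}{p}))L(p)$), your estimate-plus-enumeration argument goes through exactly as in the proof of Theorem~\ref{thm:relofspgp}: the bound $L^{(3C)}(p)\le 6h(\sqrt{-p})<\tfrac{12\sqrt{p}}{\pi}\log(4p)$ is valid, it forces $\deg>L^{(3C)}$ beyond an explicit $p_0$, and a finite machine check handles the remaining primes (including $p=2,3$, which the preceding conjecture does not cover but which you treat by direct computation anyway). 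Your own final paragraph correctly identifies the situation: you have shown that the preceding conjecture implies this one, not that this one holds unconditionally. Both the degree formula and the class-number identity for $L^{(3C)}(p)$ are open in the paper, so your argument does not close the gap; it just localizes it.
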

The conjectural relations between the square of the supersingular polynomial and Ap\'{e}ry-like numbers\footnote{See \cite[Sequence \textbf{B}]{zagier2009integral}.} for ``class 3C '' is as follows.
\begin{conj}
 For a prime number $p\geq 5$,
\begin{align*}
&{} ss^{(3 C)}_{p}(X)^{2} = \{ (X-12)(X^2 +12 X +144) \}^{\varepsilon } \\
&{} \times  \sum_{n=0}^{(p-1)/2} \binom{2 n}{n} \left\{ \sum_{k=0}^{[n/3]} (-3)^{n-3 k} \binom{2 k}{k} \binom{3 k}{k} \binom{n}{3 k}   \right\} X^{(p-1)/2-n} \pmod{p}.  
\end{align*}
\end{conj}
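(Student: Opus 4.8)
The plan is to follow the template that produced Theorem \ref{thm:sqofssp123}: attach a weight-$4$ modular form to the class $3C$, expand the square root of that form as a power series in $1/j_{3C}(\tau)$, and recover the supersingular polynomial as the reduction modulo $p$ of its polynomial part. Concretely, I would first locate the weight-$4$ Eisenstein-type form $\mathcal{E}_{3C}(\tau)$ on $\Gamma_{0}(3|3)$ whose fourth root generates the relevant ring of modular forms, in analogy with $E_{4}={}_{2}F_{1}(\tfrac1{12},\tfrac5{12};1;1728/j)^{4}$ and with the level $N=5,7$ forms treated by Sakai \cite{sakai2014atkin}. The decisive analytic input is the identity
\[ \mathcal{E}_{3C}(\tau)^{1/2} = \sum_{n=0}^{\infty} \binom{2n}{n}\,c_{n}\, j_{3C}(\tau)^{-n}, \qquad c_{n} = \sum_{k=0}^{[n/3]} (-3)^{n-3k}\binom{2k}{k}\binom{3k}{k}\binom{n}{3k}, \]
where $c_{n}$ is (up to normalization) one of Zagier's sporadic Apéry-like sequences \cite{zagier2009integral} (Sequence $\mathbf{B}$); this expansion should be extractable from the $1/j_{3C}$-expansions of weight-$2$ Eisenstein series recorded by Cooper \cite{cooper2017ramanujan}, exactly as the level $5$ and $7$ expansions were.

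The appearance of the \emph{product} $\binom{2n}{n}c_{n}$ rather than a single Apéry-like number is the phenomenon governed by Clausen's formula (\ref{eq:clausensformula}) in the levels already handled. I would therefore seek a Clausen-type identity adapted to the $3C$-uniformization: either an expression of $\mathcal{E}_{3C}^{1/2}$ as the square of a weight-$2$ Heun-type generating series, so that squaring contributes the central binomial coefficients $\binom{2n}{n}$ while the Apéry factor supplies $c_{n}$, or a suitable quadratic transformation playing the same role. Establishing this generating-function identity is the combinatorial core of the argument, and once the contiguous recurrence for $\mathcal{E}_{3C}^{1/2}$ is known it can in principle be verified by creative telescoping.

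Granting the generating series, I would pass to characteristic $p$. As in the proof of Theorem \ref{thm:sqofssp123}, the polynomial part of $X^{(p-1)/2}\,\mathcal{E}_{3C}^{1/2}$ evaluated at $X=j_{3C}$ reduces modulo $p$ to a power of $ss_{p}^{(3C)}(X)$; squaring and truncating the series at $n=(p-1)/2$ should produce the stated sum, whose degree $(p-1)/2+3\varepsilon$ matches $2\deg ss_{p}^{(3C)}(X)$. The prefactor must be matched separately: since $(X-12)(X^{2}+12X+144)=X^{3}-1728$ and $j_{3C}(\tau)^{3}=j(3\tau)$, the three roots $12,\,-6\pm 6\sqrt{-3}$ are precisely the values of $j_{3C}$ with $j(3\tau)=1728$, which correspond to supersingular curves exactly when $\varepsilon=1$; this accounts for the factor $\{(X-12)(X^{2}+12X+144)\}^{\varepsilon}$ in the same way that $(X-1728)^{\varepsilon}$ arises at level $1$.

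The main obstacle is the first step. Unlike $ss_{p}^{(2*)}$ and $ss_{p}^{(3*)}$, the polynomial $ss_{p}^{(3C)}(X)$ is defined through the resultant $\mathrm{Res}_{X}[ss_{p}(X),R_{3C}(X,Y)]$ with $R_{3C}$ of degree $4$ in $X$, so there is no clean hypergeometric or Heun representation to which Clausen's formula applies verbatim. The hard part will be to prove rigorously that the square of this resultant-defined polynomial is the truncation of $\mathcal{E}_{3C}^{1/2}$ — equivalently, to control the multiplicities of the resultant (the expected ``multiplicity almost equal to $2$'' of the $V_{p}^{(N*)}$ conjecture) and to supply the missing Clausen-type identity for the $3C$ family. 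It is precisely this combination of the resultant-multiplicity analysis and the weight-$4$ Apéry identity that keeps the statement at the level of a conjecture here.
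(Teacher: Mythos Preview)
The statement you are addressing is labelled a \emph{Conjecture} in the paper, and the paper offers no proof (or proof sketch) of it whatsoever; it is presented purely as a numerically observed identity, with only the footnote pointing to Zagier's Sequence~\textbf{B} as the source of the Ap\'ery-like factor. There is therefore no ``paper's own proof'' against which to compare your proposal.

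That said, your outline is a reasonable extrapolation of the method behind Theorem~\ref{thm:sqofssp123} and the level $5,7$ conjectures, and you correctly isolate the two genuine obstructions: the absence of a hypergeometric or Heun representation for $ss_{p}^{(3C)}(X)$ to which a Clausen-type identity could be applied, and the need to control the multiplicities in the resultant $\mathrm{Res}_{X}[ss_{p}(X),R_{3C}(X,Y)]$ (the analogue of the $V_{p}^{(N*)}$ conjecture, itself unproved in the paper). Your identification of the prefactor via $(X-12)(X^{2}+12X+144)=X^{3}-1728$ and $j_{3C}^{3}=j(3\tau)$ is apt. Since the paper itself leaves all of this open, your proposal is not so much a proof as an accurate diagnosis of what a proof would require---which is appropriate, given that the statement is a conjecture.
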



\subsection{Supersingular polynomials for small prime number $p$}
In this subsection, we consider supersingular polynomials for a fixed small prime number $p$.
From $ss_{2}(X) = X \pmod{2}$ and $ss_{3}(X) = X \pmod{3}$, the polynomial $ss_{p}^{(N*)}(Y) $ can be obtained by calculating $R_{N}(0,Y) \pmod{p}$.
Possible interpolation formulas of $\deg ss^{(N*)}_{2} (X)$ and $\deg ss^{(N*)}_{3} (X)$ are as follows.
\begin{obs}
We have $\deg ss^{(2*)}_{2} (X) =\deg ss^{(2*)}_{3} (X) =\deg ss^{(3*)}_{2} (X) =\deg ss^{(3*)}_{3} (X) =1 $. For $N \in \mathfrak{S}- \{2, 3 \}$,
\begin{align*}
&{} \deg ss^{(N*)}_{2} (X) \\
&= \frac{N+7}{12} + \frac{1}{6}\left( 1 + \left( \frac{-3}{N}\right)\right) -\frac{1}{16} \left( 1 - \left( \frac{-1}{N}\right)\right) \left( 3 - \left( \frac{-2}{N}\right) \right) \deg  ss_{N}(X), \\
&{} \deg ss^{(N*)}_{3} (X) \\
&= \frac{N+1}{12} + \frac{1}{4} \left( 1 + \left( \frac{-1}{N}\right)\right) + \frac{1}{6}\left( 1 + \left( \frac{-3}{N}\right)\right) +\frac{1}{2} \left( 1 - \left( \frac{-3}{N}\right) \right) \deg  ss_{N}(X).                       
\end{align*}
\end{obs}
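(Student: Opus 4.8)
The plan is to reduce the degree to a count of distinct roots of a one-variable specialization of $R_{N}$, dispose of the base cases by hand, and then exploit the finiteness of $\mathfrak{S}$, reserving a conceptual orbit-count for the explanation of the formulas.

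First I would record the basic reduction. For $p\in\{2,3\}$ the only supersingular $j$-invariant is $j=0$, since $ss_{p}(X)=X \pmod{p}$. Writing $R_{N}(X,Y)=X^{2}-a_{N}(Y)X+b_{N}(Y)$ as in \eqref{eq:RN}, a value $j_{N}^{*}=Y_{0}$ lies in $S_{N}^{*}$ exactly when the quadratic $R_{N}(j,Y_{0})=0$ has $j=0$ among its roots, i.e. when $R_{N}(0,Y_{0})=b_{N}(Y_{0})=0$. Hence, by the definition \eqref{def:sspNstar} of $ss_{p}^{(N*)}$ (in which multiplicities are discarded),
\[ \deg ss_{p}^{(N*)}(X)=\#\{\,Y\in\overline{\mathbb{F}}_{p}:b_{N}(Y)\equiv 0 \pmod{p}\,\}, \qquad p\in\{2,3\}. \]
The four base cases are then immediate: from the examples $b_{2}(Y)=(Y+144)^{3}$ and $b_{3}(Y)=Y(Y+192)^{3}$, and since $144\equiv192\equiv0$ modulo both $2$ and $3$, these reduce to $Y^{3}$ and $Y^{4}$, each with a single distinct root, so all four degrees equal $1$.

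Next, because $\mathfrak{S}-\{2,3\}$ is a finite set of thirteen primes, I would complete the remaining verification case by case. For each such $N$ one computes the monic $b_{N}(Y)\in\mathbb{Z}[Y]$ of degree $N+1$ from \eqref{eq:polyab} by matching the $q$-expansions of $j(\tau)j(N\tau)$ against powers of $j_{N}^{*}(\tau)$, reduces it modulo $2$ and modulo $3$, factors it over $\mathbb{F}_{2}$ and $\mathbb{F}_{3}$, and reads off the number of distinct roots in the algebraic closure. One then checks that this integer matches the right-hand sides of the stated formulas, in which $\deg ss_{N}(X)$ is evaluated by Theorem~\ref{thm:DEU} at the prime $N$. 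Since this is a finite deterministic computation, it settles the Observation.

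Finally, I would explain the shape of the formulas, which is where the real content lies. The distinct roots of $b_{N}\bmod p$ are the $j_{N}^{*}$-values of supersingular points on $X_{0}(N)^{+}$ lying over $j=0$; these correspond to the orbits of the $N+1$ cyclic subgroups of order $N$ of the unique supersingular curve $E_{0}$ (with $j=0$) under the group generated by $\mathrm{Aut}(E_{0})$ and the Atkin--Lehner involution $w_{N}$. The leading terms $\tfrac{N+7}{12}$ and $\tfrac{N+1}{12}$ reflect the division of $N+1$ by essentially the order of this acting group, which is enlarged in characteristics $2$ and $3$ precisely because $E_{0}$ acquires extra automorphisms; the Legendre-symbol corrections record which subgroups are fixed, i.e. the ramification data governed by $\left(\frac{-1}{N}\right)$, $\left(\frac{-2}{N}\right)$, $\left(\frac{-3}{N}\right)$, and the appearance of $\deg ss_{N}(X)$ is the duality between the prime $p$ and the level $N$ alluded to in the introduction. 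The main obstacle to replacing the finite check by a uniform proof is exactly this orbit count: one must pin down how the enlarged automorphism group of $E_{0}$ in characteristics $2$ and $3$ permutes the $N+1$ subgroups and interacts with $w_{N}$, and then translate the resulting fixed-point contributions into the displayed combinations of Legendre symbols. This is the step I expect to be hardest to make rigorous in closed form.
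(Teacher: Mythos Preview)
Your proposal is correct and follows the same route as the paper: reduce to counting distinct roots of $R_{N}(0,Y)=b_{N}(Y)\pmod{p}$ using $ss_{2}(X)=ss_{3}(X)=X$, and then verify the stated formulas by a finite computation over the thirteen primes $N\in\mathfrak{S}-\{2,3\}$. The paper in fact presents the result only as an \emph{interpolation} of this finite data and explicitly remarks that, $\mathfrak{S}$ being finite, countless such formulas exist; your additional orbit-count heuristic for the shape of the formulas goes beyond what the paper offers, but you correctly flag it as explanatory rather than as a replacement for the finite check.
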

Similarly, we can also interpolate the number of linear factors $L^{(N*)}(p)$ of $ss_{p}^{(N*)}(Y)$ for $p=2,3$.
\begin{obs}
We have $L^{(2*)}(2) = L^{(2*)}(3) = L^{(3*)}(2) = L^{(3*)}(3) =1 $. For $N \in \mathfrak{S}- \{2, 3 \}$,
\begin{align*}
L^{(N*)}(2) &= \frac{1}{4} \left( 1 +\left( \frac{-1}{N}\right) \right) +\frac{1}{4} \left( 1 +\left( \frac{-2}{N}\right) \right) +\frac{1}{4} h(\sqrt{-2 N}) , \\
L^{(N*)}(3) &= \frac{1}{2} \left( 1 +\left( \frac{-3}{N}\right) \right) + \frac{1}{8} \left\{ 2+ \left( 1 + \left( \frac{-1}{N}\right) \right) \left( 2 + \left( \frac{-2}{N}\right) \right) \right\} h(\sqrt{-3 N}).
\end{align*}
\end{obs}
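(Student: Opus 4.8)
The plan is to exploit the fact that in characteristic $2$ and $3$ there is essentially nothing to compute on the elliptic-curve side: everything collapses onto the single supersingular value $j=0$, after which the statement becomes a finite verification over the set $\mathfrak{S}$. First I would make this reduction explicit. Since $ss_{2}(X)=X \pmod 2$ and $ss_{3}(X)=X \pmod 3$, the only supersingular $j$-invariant in characteristic $p\in\{2,3\}$ is $j=0$. Hence, by the defining relation $R_{N}(j,j_{N}^{*})=0$ behind (\ref{def:sspNstar}) and the resultant (\ref{eq:Vp}), a value $j_{N}^{*}=Y$ lies in $S_{N}^{*}$ exactly when $X=0$ is a root of $R_{N}(X,Y)$, i.e.
\[ V_{p}^{(N*)}(Y)=\mathrm{Resultant}_{X}\bigl[X,\,R_{N}(X,Y)\bigr]=R_{N}(0,Y)=b_{N}(Y)\pmod p, \]
with $b_{N}$ as in (\ref{eq:polyab}). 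Thus $ss_{p}^{(N*)}(Y)$ is the radical (squarefree part) of $b_{N}(Y)\pmod p$, and $L^{(N*)}(p)$ is just the number of distinct roots of $b_{N}(Y)$ in $\mathbb{F}_{p}$. In particular $L^{(N*)}(2)\le 2$ and $L^{(N*)}(3)\le 3$, a bound I would keep as a running consistency check against the class-number expressions on the right.

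The four base equalities then follow at once from the Example. There $R_{2}(0,Y)=(Y+144)^{3}$ and $R_{3}(0,Y)=Y(Y+192)^{3}$, and since $144\equiv192\equiv0$ modulo both $2$ and $3$, each reduces to a power of $Y$; taking radicals gives $ss_{p}^{(N*)}(Y)=Y$, whence $L^{(N*)}(p)=1$ for all $N,p\in\{2,3\}$.

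For $N\in\mathfrak{S}-\{2,3\}$ the statement is a finite verification, this set having only thirteen elements. For each such $N$ I would first determine $b_{N}(Y)\in\mathbb{Z}[Y]$, the monic degree-$(N+1)$ polynomial with $j(\tau)j(N\tau)=b_{N}(j_{N}^{*}(\tau))$, by exactly the $q$-expansion matching used in the proof of the Proposition establishing (\ref{eq:polyab}): expand $j(\tau)j(N\tau)$ and the powers of $j_{N}^{*}(\tau)$ far enough to read off all integer coefficients. Reducing $b_{N}(Y)$ modulo $p\in\{2,3\}$, passing to its radical, and counting roots in $\mathbb{F}_{p}$ yields $L^{(N*)}(p)$ as an explicit integer, which I would then compare against the right-hand side, evaluated from the symbols $(\tfrac{-1}{N}),(\tfrac{-2}{N}),(\tfrac{-3}{N})$ and the tabulated class numbers $h(\sqrt{-2N})$, $h(\sqrt{-3N})$, closing each case.

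The main obstacle is twofold. Computationally, the heavy step is producing $R_{N}$ for the larger levels—up to $N=71$, where $b_{N}$ has degree $72$—which demands enough terms of the $q$-expansions of $j$, $j(N\tau)$ and $j_{N}^{*}$ to pin down every coefficient. Conceptually, the genuinely hard point, which the finite check deliberately sidesteps, is explaining \emph{why} $h(\sqrt{-2N})$ and $h(\sqrt{-3N})$ govern these counts at all. A level-uniform proof would have to identify the $\mathbb{F}_{p}$-rational roots of $b_{N}(Y)$ with optimal embeddings of orders in $\mathbb{Q}(\sqrt{-2N})$ (resp. $\mathbb{Q}(\sqrt{-3N})$) into the maximal order of the quaternion algebra $B_{p,\infty}$, via Deuring's correspondence and Eichler's embedding numbers—the field $\mathbb{Q}(\sqrt{-pN})$ emerging because the $p$-power Frobenius combined with the Fricke involution $w_{N}$ produces an element of norm $pN$—and I regard making this precise as the crux of any statement valid beyond the finite set $\mathfrak{S}$, and as the natural home of the ``duality'' between primes and levels flagged in the introduction.
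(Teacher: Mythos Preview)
Your approach is correct and is essentially the same as the paper's: the text itself notes that ``the polynomial $ss_{p}^{(N*)}(Y)$ can be obtained by calculating $R_{N}(0,Y)\pmod{p}$'' for $p\in\{2,3\}$, and then presents these formulas purely as interpolations over the finite set $\mathfrak{S}$, remarking afterward that ``since the set $\mathfrak{S}$ is a finite set, there are countless interpolation formula as described above.'' Your reduction to the radical of $b_{N}(Y)\pmod p$ and the ensuing finite check is exactly this, and your closing paragraph correctly identifies that no level-uniform argument is offered.
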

Of course, since the set $\mathfrak{S}$ is a finite set, there are countless interpolation formula as described above.

Theorems \ref{thm:relofspgp} and Conjecture \ref{conj:ssp57andspgp} are assertions about a set of prime number $p$ having certain properties with respect to a fixed level $N$, e.g. for the case of $N=2$,
\[ \quad \; \deg ss^{(2*)}_{p}(X) = L^{(2*)}(p) \iff p\mid \# \mathbb{B}. \]
However, there is a curious ``duality" between prime $p$ and level $N$ in $ss_{p}^{(N*)}(X)$. More precisely, for a set of level $N$ and a fixed small prime number $p$, the following holds.
\begin{obs}
For $N \in \mathfrak{S}$,
\begin{align*}
\deg ss^{(N*)}_{2}(X) = L^{(N*)}(2) &{}\iff N  \mid \# \mathbb{B}, \\
\deg ss^{(N*)}_{3}(X) = L^{(N*)}(3) &{}\iff N  \mid \# Fi^{\prime}_{24} , \\
\deg ss^{(N*)}_{5}(X) = L^{(N*)}(5) &{}\iff N  \mid \# HN , \\
\deg ss^{(N*)}_{7}(X) = L^{(N*)}(7) &{}\iff N  \mid \# He .
\end{align*}
\end{obs}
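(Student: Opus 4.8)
The plan is to reduce the statement to a finite verification, since $\mathfrak{S}$ consists of only fifteen primes and each of $p\in\{2,3,5,7\}$ is fixed. The crucial simplification is that for these small primes the supersingular polynomial $ss_{p}(X)$ is \emph{linear}: indeed $ss_{2}(X)=ss_{3}(X)=ss_{5}(X)=X$ and $ss_{7}(X)=X-1728\equiv X-6 \pmod{7}$, so there is a single supersingular $j$-invariant $j_{0}\in\mathbb{F}_{p}$, namely $j_{0}=0$ for $p=2,3,5$ and $j_{0}=1728$ for $p=7$. Because $ss_{p}^{(N*)}$ is defined through the resultant $V_{p}^{(N*)}(Y)=\mathrm{Resultant}_{X}[\,ss_{p}(X),R_{N}(X,Y)\,]$ and $ss_{p}(X)=X-j_{0}$ is monic of degree one, this resultant is, up to a sign, $R_{N}(j_{0},Y)\pmod{p}$; hence the supersingular $j_{N}^{*}$-invariants in characteristic $p$ are exactly the roots of $R_{N}(j_{0},Y)\pmod{p}$, and $ss_{p}^{(N*)}(Y)$ is the squarefree part of this one explicit univariate polynomial. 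Consequently both $\deg ss_{p}^{(N*)}(X)$ and $L^{(N*)}(p)$ are read off from its factorization, and the equality $\deg ss_{p}^{(N*)}(X)=L^{(N*)}(p)$ holds precisely when $R_{N}(j_{0},Y)\pmod{p}$ splits into (possibly repeated) linear factors over $\mathbb{F}_{p}$.

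For $p=2$ and $p=3$ I would bypass the factorizations and instead substitute the closed-form interpolation formulas of the two preceding Observations, which express $\deg ss_{2}^{(N*)}(X)$, $\deg ss_{3}^{(N*)}(X)$, $L^{(N*)}(2)$ and $L^{(N*)}(3)$ in terms of the Kronecker symbols $\left(\tfrac{-1}{N}\right),\left(\tfrac{-2}{N}\right),\left(\tfrac{-3}{N}\right)$, the degree $\deg ss_{N}(X)$, and the class numbers $h(\sqrt{-2N}),h(\sqrt{-3N})$. Setting $\deg=L$ turns each case into an explicit arithmetic condition on $N$, which I would evaluate for the fifteen elements of $\mathfrak{S}$ and compare with $\#\mathbb{B}=2^{41}\cdot 3^{13}\cdot 5^{6}\cdot 7^{2}\cdot 11\cdot 13\cdot 17\cdot 19\cdot 23\cdot 31\cdot 47$ and $\#Fi^{\prime}_{24}=2^{21}\cdot 3^{16}\cdot 5^{2}\cdot 7^{3}\cdot 11\cdot 13\cdot 17\cdot 23\cdot 29$; the small cases $N=2,3$ are already covered by the recorded values $\deg=L=1$. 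This yields the first two equivalences.

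For $p=5$ and $p=7$ no such formula is supplied, so I would carry out the factorization directly: for each $N\in\mathfrak{S}$ form $R_{N}(j_{0},Y)\pmod{p}$, compute its squarefree part, and tabulate the degree together with the number of $\mathbb{F}_{p}$-rational roots. The equality $\deg ss_{p}^{(N*)}(X)=L^{(N*)}(p)$ holds exactly for those $N$ whose associated polynomial has no irreducible quadratic factor, and I would check that this list of $N$ coincides with the prime divisors of $\#HN=2^{14}\cdot 3^{6}\cdot 5^{6}\cdot 7\cdot 11\cdot 19$ for $p=5$ and of $\#He=2^{10}\cdot 3^{3}\cdot 5^{2}\cdot 7^{3}\cdot 17$ for $p=7$.

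The main obstacle is computational rather than conceptual: the entire argument rests on having the defining polynomials $R_{N}(X,Y)=X^{2}-a_{N}(Y)X+b_{N}(Y)$ explicitly for every $N\in\mathfrak{S}$ up to $N=71$. Producing $a_{N}$ and $b_{N}$ demands, as in the proof of the earlier Proposition, the $q$-expansions of $j(\tau)+j(N\tau)$, of $j(\tau)\,j(N\tau)$, and of the Hauptmodul $j_{N}^{*}(\tau)$ carried far enough to pin down polynomials of degree $N$ and $N+1$ with rapidly growing integer coefficients; this is the bulk of the labour and is where a computer algebra system is indispensable. Once each $R_{N}$ is in hand, specializing $X\mapsto j_{0}$, reducing modulo $p$, taking the squarefree part, and counting $\mathbb{F}_{p}$-roots are entirely routine. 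A secondary subtlety that deserves separate care is the diagonal cases $N=p$ (that is, $N=p=5$ and $N=p=7$), where the modular correspondence can degenerate modulo $p$; these I would verify by hand and confirm that they are consistent with the divisibilities $5\mid\#HN$ and $7\mid\#He$.
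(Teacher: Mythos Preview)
Your proposal is correct and matches the paper's own approach: the paper disposes of this Observation with the single sentence ``It can be checked by direct calculation for fifteen levels $N$,'' and what you have written is a careful elaboration of exactly that computation, exploiting the fact that $ss_{p}(X)$ is linear for $p\in\{2,3,5,7\}$ so that $ss_{p}^{(N*)}$ is the squarefree part of $R_{N}(j_{0},Y)\bmod p$. One tiny wording slip: when you say the equality holds iff the polynomial has ``no irreducible quadratic factor,'' you should say ``no irreducible factor of degree $\geq 2$,'' since containment of all roots in $\mathbb{F}_{p^{2}}$ is only posed as a Question in the paper, not assumed.
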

It can be checked by direct calculation for fifteen levels $N$.
We note that the set of prime divisors of the group $\mathbb{B}, Fi^{\prime}_{24}, HN$ and $ He$ are truly included in the set $\mathfrak{S}$.


\section*{Acknowledgements}
The author thanks to Professor Masanobu Kaneko for valuable comments, 
and for introducing him to the study of the connection between modular forms and supersingular polynomials.
He would also like to thank Professor Hiroyuki Tsutsumi and Dr. Yuichi Sakai, who introduced him to the study of supersingular polynomials with certain levels.


\begin{thebibliography}{10}

\bibitem{aricheta2018supersingular}
V.~M. Aricheta.
\newblock Supersingular elliptic curves and moonshine.
\newblock preprint, 2018.

\bibitem{bateman1950remarks}
P.~T. Bateman, S.~Chowla, and P.~Erd\H{o}s.
\newblock Remarks on the size of ${L}(1,\chi)$.
\newblock {\em Publ. Math. Debrecen}, {\bf 1}:165--180, 1950.

\bibitem{brillhart2004class}
J.~Brillhart and P.~Morton.
\newblock Class numbers of quadratic fields, {H}asse invariants of elliptic
  curves, and the supersingular polynomial.
\newblock {\em Journal of Number Theory}, {\bf 106}(1):79--111, 2004.

\bibitem{choi2013basis}
S.~Choi and C.~H. Kim.
\newblock Basis for the space of weakly holomorphic modular forms in higher
  level cases.
\newblock {\em Journal of Number Theory}, {\bf 133}(4):1300--1311, 2013.

\bibitem{conway1979monstrous}
J.~H. Conway and S.~P. Norton.
\newblock Monstrous moonshine.
\newblock {\em Bull. London Math. Soc}, {\bf 11}(3):308--339, 1979.

\bibitem{cooper2017ramanujan}
S.~Cooper.
\newblock {\em Ramanujan's theta functions}.
\newblock Springer, 2017.

\bibitem{deuring1941die}
M.~Deuring.
\newblock Die {T}ypen der {M}ultiplikatorenringe elliptischer
  {F}unktionenk\"{o}rper.
\newblock {\em Abh. Math. Sem. Hamburg}, {\bf 14}:197--272, 1941.

\bibitem{deuring1944die}
M.~Deuring.
\newblock Die {A}nzahl der {T}ypen von {M}aximalordungen einer definiten
  {Q}uaternionenalgebra mit primer {G}rundzahl.
\newblock {\em Jahresber. Deutsch. Math. Verein}, {\bf 54}:21--41, 1944.

\bibitem{duncan2016jack}
J.~F.~R. Duncan and K.~Ono.
\newblock The {J}ack {D}aniels {P}roblem.
\newblock {\em Journal of Number Theory}, {\bf 161}:230--239, 2016.

\bibitem{eichler1938uber}
M.~Eichler.
\newblock {\"U}ber die {I}dealklassenzahl total definiter
  {Q}uaternionenalgebren.
\newblock {\em Math. Z}, {\bf 43}:102--109, 1938.

\bibitem{igusa1958class}
J.~Igusa.
\newblock Class number of a definite quaternion with prime discriminant.
\newblock {\em Proceedings of the National Academy of Sciences}, {\bf
  44}(4):312--314, 1958.

\bibitem{kaneko1998supersingular}
M.~Kaneko and D.~Zagier.
\newblock Supersingular $j$-invariants, hypergeometric series, and {A}tkin's
  orthogonal polynomials.
\newblock {\em AMS/IP Studies in Advanced Mathematics}, {\bf 7}:97--126, 1998.

\bibitem{koike2009supersingular}
M.~Koike.
\newblock On supersingular $j_{2}^{*}$-polynomials for ${\Gamma}_{0}^{*}(2)$.
\newblock unpuplished, 2009.

\bibitem{maier2007192}
R.~Maier.
\newblock The 192 solutions of the {H}eun equation.
\newblock {\em Mathematics of Computation}, {\bf 76}(258):811--843, 2007.

\bibitem{morton2010legendre}
P.~Morton.
\newblock Legendre polynomials and complex multiplication, {I}.
\newblock {\em Journal of Number Theory}, {\bf 130}(8):1718--1731, 2010.

\bibitem{morton2011cubic}
P.~Morton.
\newblock The cubic {F}ermat equation and complex multiplication on the
  {D}euring normal form.
\newblock {\em The Ramanujan Journal}, {\bf 25}(2):247--275, 2011.

\bibitem{ogg1975automorphismes}
A.~P. Ogg.
\newblock Automorphismes de courbes modulaires.
\newblock {\em S{\'e}minaire Delange-Pisot-Poitou. Th{\'e}orie des nombres},
  {\bf 16}(1):1--8, 1975.

\bibitem{pizer1978note}
A.~Pizer.
\newblock A note on a conjecture of {H}ecke.
\newblock {\em Pacific Journal of Mathematics}, {\bf 79}(2):541--548, 1978.

\bibitem{sakai2011atkin}
Y.~Sakai.
\newblock The {A}tkin orthogonal polynomials for the low-level {F}ricke groups
  and their application.
\newblock {\em International Journal of Number Theory}, {\bf 7}(06):1637--1661,
  2011.

\bibitem{sakai2014atkin}
Y.~Sakai.
\newblock The {A}tkin orthogonal polynomials for the {F}ricke groups of levels
  5 and 7.
\newblock {\em International Journal of Number Theory}, {\bf
  10}(08):2243--2255, 2014.

\bibitem{silverman2009arithmetic}
J.~H. Silverman.
\newblock {\em The arithmetic of elliptic curves}, volume 106 of {\em Graduate
  Texts in Mathematics}.
\newblock Springer New York, 2nd edition, 2009.

\bibitem{tsutsumi2007atkin}
H.~Tsutsumi.
\newblock The {A}tkin orthogonal polynomials for congruence subgroups of low
  levels.
\newblock {\em The Ramanujan Journal}, {\bf 14}(2):223--247, 2007.

\bibitem{zagier2009integral}
D.~Zagier.
\newblock Integral solutions of {A}p{\'e}ry-like recurrence equations.
\newblock {\em Groups and Symmetries: from Neolithic Scots to John McKay, CRM
  Proc. Lecture Notes}, {\bf 47}:349--366, 2009.

\end{thebibliography}

\end{document}